\newtheorem{theorem}{Theorem}
\newtheorem{proposition}[theorem]{Proposition}
\newtheorem{lemma}[theorem]{Lemma}
\newtheorem{corollary}[theorem]{Corollary}
\theoremstyle{definition}
\newtheorem{definition}[theorem]{Definition}
\theoremstyle{remark}
\newtheorem{remark}[theorem]{Remark}
\def\R{\mathbb{R}}
\def\C{\mathbb{C}}
\def\N{\mathbb{N}}
\def\K{\mathcal{K}}
\def\P{\mathcal{P}}
\def\C{\mathcal{C}}
\def\inte{\rm int}
\def\ext{\rm ext}
\begin{document}

\title[]{On the honeycomb conjecture \\ for a class of minimal convex partitions }

\author[D. Bucur, I.~Fragal\`a, B. Velichkov, G. Verzini]{Dorin Bucur, Ilaria Fragal\`a, Bozhidar Velichkov, Gianmaria Verzini}

\address[Dorin Bucur]{
Institut Universitaire de France \\
Laboratoire de Math\'ematiques UMR 5127 \\
Universit\'e de Savoie,  Campus Scientifique \\
73376 Le-Bourget-Du-Lac (France)
}
\email{dorin.bucur@univ-savoie.fr}

\address[Ilaria Fragal\`a]{
Dipartimento di Matematica \\ Politecnico  di Milano \\
Piazza Leonardo da Vinci, 32 \\
20133 Milano (Italy)
}
\email{ilaria.fragala@polimi.it}

\address[Bozhidar Velichkov]{Laboratoire Jean Kuntzmann (LJK), Universit\'e Grenoble Alpes
\newline \indent
B\^atiment IMAG, 700 Avenue Centrale, 38401 Saint-Martin-d'H\`eres (France)}
\email{bozhidar.velichkov@univ-grenoble-alpes.fr}

\address[Gianmaria Verzini]{
Dipartimento di Matematica \\ Politecnico  di Milano \\
Piazza Leonardo da Vinci, 32 \\
20133 Milano (Italy)
}
\email{gianmaria.verzini@polimi.it}

\keywords{optimal partitions, honeycomb conjecture, Cheeger constant, logarithmic capacity, discrete Faber-Krahn inequality.}
\subjclass[2010]{52C20, 51M16, 65N25, 49Q10}

\date{\today}

\begin{abstract}
We prove that the planar hexagonal honeycomb is asymptotically optimal for a large class of optimal partition problems, in which the cells are assumed to be convex, and the criterion is to minimize either the sum or the maximum among the energies of the cells, the cost being a shape functional which satisfies a few assumptions. They are: monotonicity under inclusions;  homogeneity under dilations;
a Faber-Krahn inequality for convex hexagons; a  convexity-type inequality for the map which associates with every $n \in \N$ the minimizers of $F$ among convex $n$-gons with given area.
In particular, our result allows to obtain the honeycomb conjecture for the Cheeger constant and for the logarithmic capacity (still assuming the cells to be convex). Moreover we show that, in order to get the conjecture also for the first Dirichlet eigenvalue of the Laplacian, it is sufficient to establish some facts about the behaviour of $\lambda _1$ among convex pentagons, hexagons, and heptagons with prescribed area.   \end{abstract}
\maketitle


\section{Introduction }

Given an open bounded subset $\Omega$ of $\R ^2$ with a Lipschitz boundary, we consider the problem of finding an optimal partition
$\{ E _1, \dots, E _k \}$ of $\Omega$ into $k$ convex cells, the energy being either of additive or of supremal type, {\it i.e.},
$$\sum _{ i = 1} ^ k F ( E _i) \qquad \text { or } \qquad \max _{i = 1, \dots, k } \big \{ F ( E _i  ) \big \}\,.$$
The cost functional $F$ is assumed to be homogeneous under dilations, and monotone under domain inclusion
on the class of convex bodies  in
$\R ^2$.

In case $F$ is monotone decreasing, the admissible configurations $\{ E _1, \dots, E _k \}$ are  {\it convex $k$-clusters} of $\Omega$, denoted by ${\C} _k (\Omega)$  and meant as families
of $k$ convex bodies which are contained into $\Omega$ and have mutually disjoint interiors.
So our problems read
\begin{eqnarray}
\displaystyle m _{k} (\Omega) = \inf  \Big \{  \sum _{i = 1} ^ k F (E _i) \ :\  \{E_i\} \in \C _k (\Omega) \Big \} \, , \quad  \
& \label{f:pb0}
  \\  \noalign{\medskip}
\displaystyle M _{k} (\Omega) = \inf  \Big \{ \max_{i = 1, \dots, k}  F (E _i) \ :\  \{E_i\} \in \C _k (\Omega) \Big \} \, .\quad  \
& \label{f:pb}
\end{eqnarray}
In case $F$ is monotone increasing, to make the minimization nontrivial (namely the infimum nonzero) we have to consider as admissible configurations
$\{ E _1, \dots, E _k \}$ only the convex $k$-clusters which, loosely speaking, cover the whole of $\Omega$: they are
called
{\it convex $k$-partitions}  of $\Omega$, and are denoted by ${\P} _k (\Omega)$.
Moreover,  for increasing functionals
we just consider problems of the supremal type
\begin{equation}\label{f:pb2}
\displaystyle M _{k} (\Omega) = \inf  \Big \{ \max_{i = 1, \dots, k}  F (P _i\cap \Omega) \ :\  \{P_i\} \in {\P _k (\Omega)} \Big \} \, .
\end{equation}
Actually, for $F$ increasing,
the counterpart of  problem \eqref{f:pb0} would become meaningful only after adding an isoperimetric constraint on the cells
which would change dramatically the nature of the problem
({\it cf.}\ the well-known case solved by Hales in the celebrated paper \cite{Hales},
when the cost is the total perimeter of the partition, see also \cite{Morgan}).

We are interested in studying for which kind of variational energies $F$ optimal partition problems of the kind \eqref{f:pb0}, \eqref{f:pb}, or \eqref{f:pb2}
satisfies the ``honeycomb conjecture''. Roughly, it can be stated as
the fact that,  in the limit for $k$ very large, an optimal packing will be made of  translations of an identical shape, given precisely by a regular hexagon $H$.
A simple mathematical formulation can be given as the asymptotic law
\begin{equation}\label{f:rough}
\lim _{ k \to + \infty } \frac{1}{k ^ \gamma} \Big ( \frac{|\Omega|}{|H| } \Big ) ^ \gamma  m _k (\Omega)  =  F (H)\,,
\end{equation}
and similarly with $m _k (\Omega)$ replaced by $M _k (\Omega)$. Clearly,
the exponent $\gamma>0$ appearing in \eqref{f:rough} depends on the homogeneity degree of the energy $F$ under domain dilations.

The original motivation of our work is a conjecture formulated by Caffarelli and Lin in \cite{CaffLin}. It predicts the validity of
\eqref{f:rough} when $F$ is the first Dirichlet eigenvalue $\lambda _1$ of the Laplacian, namely
when $m _k (\Omega)$ is given  by
\begin{equation}\label{f:pb22}
\displaystyle m _{k} (\Omega) = \inf  \Big \{ \sum_{i = 1, \dots, k}  \lambda _1 (E _i) \ :\  E _i \subseteq \Omega \,, \  |E_i|  \in (0,  + \infty)\, , \ |E _i \cap E _j | = 0
 \Big \} \, .
\end{equation}
As a matter of fact,  optimal spectral partitions have received an increasing attention in the last decade,
including also the case when the energy of the partition is the maximal eigenvalue among the chambers (in particular by Helffer and coauthors);
without any attempt of completeness,  let us quote the papers \cite{BoVe, BNHV, BuBuHe, CTV03, CTV05, He07, He10, HeHoTe, Ramos}. In particular, in \cite{HeHoTe}, a similar honeycomb conjecture for the maximal eigenvalue problem is attributed to Van den Berg.

We emphasize that  the substantial difference between problem \eqref{f:pb22} considered by Caffarelli and Lin and our problem \eqref{f:pb0} is that  we added the quite stringent constraint that the cells are, {\it a priori},  convex. Clearly, this  yields a great simplification, and in this sense the present work is conceived as a first step towards the complete study of the problem without the convexity constraint.
This kind of approach  was inspired by the fact that, for perimeter minimizing partitions made by convex polygons, the proof of the honeycomb conjecture is much simpler, and indeed it was given by Fejes T\'oth some decades before Hales' breakthrough (see \cite{FT64}).

As a counterpart, we take the freedom to work with very general shape functionals, by imposing on $F$ very few assumptions.

 Our main results are stated in Section \ref{main} below:
 Theorems \ref{t:honeycomb0}, \ref{t:honeycomb}, and \ref{t:honeycomb2}
deal respectively with problems \eqref{f:pb0}, \eqref{f:pb} and \eqref{f:pb2}, under suitable hypotheses on a generic shape functional $F$.
 As a consequence, the honeycomb conjecture is obtained for the Cheeger constant  and for logarithmic capacity (see Corollaries \ref{p:h0}-\ref{p:h} and \ref{p:LC});
 moreover, we  provide some relatively simple sufficient conditions for its validity also to the case of the first Dirichlet eigenvalue
 (see Proposition \ref{p:l1}).

 The proofs of Theorem \ref{t:honeycomb0}, Theorem \ref{t:honeycomb}, Theorem \ref{t:honeycomb2}, and Proposition \ref{p:l1}  are given in the subsequent sections.
 The Appendix contains some analytical computations used to obtain Corollaries \ref{p:h0}-\ref{p:h} and \ref{p:LC}.

 \section{Main results}\label{main}

Throughout the  paper, unless otherwise specified, we will use the following notation:
\begin{itemize}
\item $\Omega$ is an open bounded subset of $\R ^2$ with a Lipschitz boundary;
\item $H$ denotes the unit area regular hexagon;
\item $\mathcal K ^ 2$ is the family of convex bodies  in $\R ^2$ ({\it i.e.}, convex compact sets in the plane having a  nonempty interior).
\end{itemize}
We next introduce the class of convex $k$-clusters.
\begin{definition} We denote by ${\C} _k (\Omega)$ the class of {\it convex $k$-clusters} of $\Omega$, meant as families
$\{ E _i \} _{\{i = 1, \dots, k \}}$
 of subsets of $\R^2$ such that:

 \smallskip
 \begin{itemize}
 \item[{-}] $E_i \in \mathcal K ^2$ for every $i$;

 \smallskip
 \item[{-}] $E_i \subseteq \Omega$ for every $i$;

\smallskip
\item[{-}] $|E _i \cap E _j | = 0$ for every $i\neq j$.
 \end{itemize}
\end{definition}

Our main results on the asymptotic behaviour of optimal partition problems for decreasing functionals read as follow.
We give two distinct statements for the case of additive and supremal energies because the assumptions we need in the two cases are slightly different
from each other (see Remark  \ref{r:FK}, which collects our comments on the theorems stated hereafter).

\begin{theorem}\label{t:honeycomb0}  Assume that $F:\K ^ 2 \to [0, + \infty)$ satisfies the following conditions:
\begin{itemize}
\item[(H1)] Domain monotonicity:
$$\Omega _ 1 \subseteq \Omega _ 2 \Rightarrow F (\Omega _ 1 ) \geq F (\Omega _2)\,.$$
\item[(H2)] Homogeneity:
$$\exists\, \alpha >0 \ :\ F ( t \Omega) = t ^ {- \alpha} F (\Omega)\qquad \hbox{ for every } t >0\,.$$
 \item[(H3)] Behaviour on polygons: setting
 $$\gamma (n) := \min \Big \{ F (P) |P| ^ {\alpha/2} \ :\ P \  \text{n-gon in }     \mathcal K ^ 2  \} \qquad \forall n \in \N \, , $$
we have

\medskip
\begin{itemize}
\item[(i)] $\displaystyle \gamma (6) = F (H)$;
 \smallskip
\item[(ii)] $\displaystyle \frac{1}{k} \sum _{i=1} ^ k n _i \leq 6 \ \Rightarrow \ \frac{1}{k}  \sum _{i=1} ^ k \gamma (n _i ) ^ {2/(\alpha+2)} \geq  \gamma (6 ) ^ {2/(\alpha+2)}$.

 \end{itemize}
 \end{itemize}

 \smallskip
Then, in the limit as $k \to + \infty$, the optimal partition problem
$$
\displaystyle m _{k} (\Omega) = \inf  \Big \{ \sum_{i = 1}^  k F (E _i) \ :\  \{E_i\} \in \C _k (\Omega) \Big \}
$$
is solved by a packing of regular hexagons, namely it holds
\begin{equation}\label{f:conj0}
\lim_{k \to + \infty}\frac{|\Omega| ^ {\alpha /2}} {k ^ {(\alpha+2)/2}}  m _k (\Omega)  =
F ( H) \,.
\end{equation}
\end{theorem}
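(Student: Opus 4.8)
The plan is to prove matching asymptotic lower and upper bounds for $k^{-(\alpha+2)/2}|\Omega|^{\alpha/2}m_k(\Omega)$, both equal to $F(H)=\gamma(6)$. The upper bound is the easy direction: one tiles a large subdomain of $\Omega$ by $k$ congruent regular hexagons, each of area $\simeq |\Omega|/k$. Since $\Omega$ has Lipschitz boundary, a honeycomb tiling of $\R^2$ by hexagons of area $\varrho$ intersects $\Omega$ in roughly $|\Omega|/\varrho$ full cells, the boundary layer contributing only $O(\varrho^{-1/2})=o(k)$ cells; discarding those and rescaling via (H2) gives $m_k(\Omega)\le (1+o(1))\,k\,F(H)\,(k/|\Omega|)^{\alpha/2}$, hence $\limsup k^{-(\alpha+2)/2}|\Omega|^{\alpha/2}m_k(\Omega)\le F(H)$. (A minor technical point: if one needs exactly $k$ cells, split one hexagon or add a few extra small cells inside, which only decreases energy by monotonicity is false — rather, one takes slightly fewer than $k$ hexagons and fills with $o(k)$ tiny convex cells whose total contribution is still $o(k\varrho^{-\alpha/2})$; this is the kind of routine adjustment I will not belabor.)

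For the lower bound, fix an arbitrary convex $k$-cluster $\{E_i\}\in\C_k(\Omega)$. Each $E_i$ is a convex body; approximate it from inside by a convex polygon, but the real point is to control the \emph{number of sides} counted with the right combinatorics. The key geometric input is Euler's formula for the planar (sub)division induced by the cluster: if we regard the cells as faces of a planar graph (taking the union $\bigcup \partial E_i$, or a slight perturbation to make it a genuine graph), the average number of edges per face is at most $6$ in the limit $k\to\infty$ — more precisely $\frac1k\sum_i n_i\le 6 + o(1)$, where $n_i$ is the number of edges of $E_i$, the $o(1)$ coming from boundary faces and from the fact that $\Omega$ is a bounded region rather than a torus. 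This is exactly the hypothesis that activates (H3)(ii).

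Now combine: by definition of $\gamma(n)$ and (H2), $F(E_i)\ge \gamma(n_i)\,|E_i|^{-\alpha/2}$ for each $i$. I want to bound $\sum_i F(E_i)$ from below subject to $\sum_i|E_i|\le|\Omega|$ and the constraint $\frac1k\sum n_i\le 6$. Since $t\mapsto t^{-\alpha/2}$ is convex, for the area part one uses Jensen/power-mean; the cleaner route is to apply the weighted power-mean inequality that turns $\sum_i \gamma(n_i)|E_i|^{-\alpha/2}$ into an expression involving $\big(\frac1k\sum_i\gamma(n_i)^{2/(\alpha+2)}\big)^{(\alpha+2)/2}$ times the right power of $\sum_i |E_i|$ — this is precisely why the exponent $2/(\alpha+2)$ appears in (H3)(ii). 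Concretely: by Hölder (or the discrete reverse-type inequality), $\sum_i \gamma(n_i)|E_i|^{-\alpha/2}\ge \big(\sum_i \gamma(n_i)^{2/(\alpha+2)}\big)^{(\alpha+2)/2}\big(\sum_i |E_i|\big)^{-\alpha/2}$; then (H3)(ii) gives $\sum_i\gamma(n_i)^{2/(\alpha+2)}\ge k\,\gamma(6)^{2/(\alpha+2)}$ (asymptotically, absorbing the $o(1)$ in the side-count constraint), and $\sum_i|E_i|\le|\Omega|$ finishes it: $m_k(\Omega)\ge (1-o(1))\,k^{(\alpha+2)/2}\,|\Omega|^{-\alpha/2}\,\gamma(6)$, i.e. the desired liminf bound, using (H3)(i).

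The main obstacle is making the side-counting rigorous: a convex $k$-cluster need not tile $\Omega$, cells may meet along lower-dimensional sets in awkward ways, two cells can share only a vertex, and "edges" of the induced graph are not canonically defined. I would handle this by a careful reduction — discarding cells that are too small or too degenerate (their total number and total area being negligible), perturbing to put the configuration in general position so that $\bigcup\partial E_i$ is a finite planar graph with straight edges, and then applying Euler's relation $V-E+F=2$ together with $2E\ge 3V$ (each vertex has degree $\ge 3$ after perturbation) to get $\sum_{\text{faces}} n_f \le 6F - O(1)$, carefully accounting for the unbounded face and the boundary cells, both of which contribute only at scale $o(k)$ since there are $O(\sqrt k)$ boundary cells. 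The passage from "polygonal induced graph" back to the original convex cells (each convex cell has at most as many sides as its polygonal trace, and $\gamma$ is what controls the energy) uses only monotonicity of $\gamma$ in the relevant averaged sense and the definition of $\gamma(n)$; this is the part where the convexity assumption on the cells is genuinely exploited, since it guarantees each cell is itself a polygon with a well-defined finite number of sides once we argue that optimal or near-optimal cells may be taken polygonal.
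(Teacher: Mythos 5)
Your upper bound and your H\"older step are essentially the paper's (the latter is exactly the inequality used there, with the same exponents explaining $2/(\alpha+2)$ in (H3)(ii)). The genuine gap is in the lower bound: the side-count estimate $\frac1k\sum_i n_i\le 6+o(1)$ is simply false for an arbitrary convex $k$-cluster. A cluster need not saturate the domain: take $k$ pairwise disjoint tiny regular $12$-gons scattered in $\Omega$; then every $n_i=12$. The Euler-formula mechanism only yields an average of at most $6$ sides when every side of every cell meets, in its relative interior, another cell or the boundary of the domain (otherwise the graph built from $\bigcup\partial E_i$ is disconnected, the unbounded face absorbs all the edges, and $V-E+F=2$ is not available in the form you need). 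The missing idea, which is the heart of the paper's argument (its Lemma on $k$-triangles), is to first \emph{enlarge} the given cells: one replaces $\{E_i\}$ by a convex cluster $\{P_i\}$ with $P_i\supseteq E_i$ maximizing the total area. Maximality forces the enlarged cells to be convex polygons whose every side touches a neighbouring cell or the boundary, and only then does the planar-graph/Euler argument give $\frac1k\sum n_i\le 6$ (exactly, after a delicate count of the sides lying on the boundary, including a translation trick to handle cells reaching two corners of the domain). Since $F$ is decreasing, (H1) gives $F(E_i)\ge F(P_i)$, so the lower bound for the enlarged polygons transfers back to the original cells. Without this saturation step your (H3)(ii) hypothesis is never activated, and indeed your estimate cannot be repaired as stated.

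A second, related problem with running the argument directly on $\Omega$ is the curved boundary: the area-maximizing enlargement need not consist of polygons when the container is not polygonal, and controlling the number of sides of cells hugging a Lipschitz (possibly curved) boundary is not a matter of counting the $O(\sqrt k)$ boundary cells. The paper avoids this by proving the exact statement $m_k(\Omega_k)=kF(H)$ on $k$-hexagonal structures (reduced, by adding unit hexagons, to triangular ones, whose boundary consists of straight sides in three directions), and then transferring to a general $\Omega$ by sandwiching $\rho\Omega$ between inner and outer hexagonal structures and using the monotonicity and $(-\alpha)$-homogeneity of $\Omega\mapsto m_k(\Omega)$. Your $o(1)$-error strategy on $\Omega$ itself would need a substitute for both of these steps; as written, neither the average-side bound nor the boundary control is justified.
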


\medskip

\begin{theorem}\label{t:honeycomb}  Assume that $F:\K ^ 2 \to [0, + \infty)$ satisfies the following conditions:
\begin{itemize}
\item[(H1)] Domain monotonicity:
$$\Omega _ 1 \subseteq \Omega _ 2 \Rightarrow F (\Omega _ 1 ) \geq F (\Omega _2)\,.$$
\item[(H2)] Homogeneity:
$$\exists\, \alpha >0 \ :\ F ( t \Omega) = t ^ {- \alpha} F (\Omega)\qquad \hbox{ for every } t >0\,.$$
 \item[(H3)] Behaviour on polygons: setting
 $$\gamma (n) := \min \Big \{ F (P) |P| ^ {\alpha/2} \ :\ P \  \text{n-gon in }     \mathcal K ^ 2  \} \qquad \forall n \in \N \, , $$
we have

\medskip
\begin{itemize}
\item[(i)] $\displaystyle \gamma (6) = F (H)$;
 \smallskip
\item[(ii)] $\displaystyle \frac{1}{k} \sum _{i=1} ^ k n _i \leq 6 \ \Rightarrow \ \frac{1}{k}  \sum _{i=1} ^ k \gamma (n _i ) ^ {2/\alpha} \geq  \gamma (6 ) ^ {2/\alpha}$.

 \end{itemize}
 \end{itemize}

 \smallskip
Then, in the limit as $k \to + \infty$, the optimal partition problem
$$
\displaystyle M _{k} (\Omega) = \inf  \Big \{ \max_{i = 1, \dots, k}  F (E _i) \ :\  \{E_i\} \in \C _k (\Omega) \Big \}
$$
is solved by a packing of regular hexagons, namely it holds
\begin{equation}\label{f:conj}
\lim_{k \to + \infty}\frac{|\Omega| ^ {\alpha /2}} {k ^ {\alpha/2}}  M _k (\Omega)  =
F ( H) \,.
\end{equation}
\end{theorem}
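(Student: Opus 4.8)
The plan is to establish the limit \eqref{f:conj} by proving matching upper and lower bounds, in the spirit of the additive case (Theorem \ref{t:honeycomb0}) but adapted to the supremal structure of $M_k$. For the \emph{upper bound}, I would simply exhibit an almost-optimal competitor: tile the plane by congruent regular hexagons of area $|\Omega|/k$ and keep those entirely contained in $\Omega$; a standard boundary-layer estimate (the number of discarded hexagons is $O(\sqrt{k})$) shows that, after a small dilation to adjust the count, one obtains a convex $k$-cluster in which every cell is a regular hexagon of area $(1+o(1))|\Omega|/k$. By (H2), $F$ of such a hexagon equals $(1+o(1))(|\Omega|/k)^{-\alpha/2}F(H)$, hence $\limsup_k |\Omega|^{\alpha/2}k^{-\alpha/2}M_k(\Omega)\le F(H)$.

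The heart of the matter is the \emph{lower bound}. Given any convex $k$-cluster $\{E_i\}$, I want to bound $\max_i F(E_i)$ from below by $(1-o(1))(|\Omega|/k)^{-\alpha/2}F(H)$. By definition of $\gamma(n)$ in (H3), if $E_i$ has at most $n_i$ ``sides'' (after a suitable reduction of each convex body to an inscribed/approximating $n_i$-gon, using domain monotonicity (H1) to only decrease $F$), then $F(E_i)\ge \gamma(n_i)|E_i|^{-\alpha/2}$, so $\max_i F(E_i)\ge \max_i \gamma(n_i)|E_i|^{-\alpha/2}$. The combinatorial input is the Euler-type inequality for packings of convex polygons in the plane: the average number of sides $\frac1k\sum_i n_i$ is asymptotically at most $6$ (more precisely $\le 6 - \frac{c}{k}$ up to boundary corrections, coming from $V-E+F=2$ applied to the planar subdivision, with each vertex having degree $\ge 3$). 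Then I would combine this with a convexity/rearrangement argument: since $\sum_i|E_i|\le|\Omega|$, to make $\max_i \gamma(n_i)|E_i|^{-\alpha/2}$ small one is forced to give large area to cells with small $\gamma(n_i)$; optimizing, the worst case distributes area proportionally to $\gamma(n_i)^{2/\alpha}$, which reduces the problem to showing
\[
\frac{1}{k}\sum_{i=1}^k \gamma(n_i)^{2/\alpha} \ \ge\ \gamma(6)^{2/\alpha}
\]
whenever $\frac1k\sum n_i\le 6$ --- and this is precisely hypothesis (H3)(ii). Tracking the $O(\sqrt k)$ boundary defect and the approximation errors in the polygonal reduction, one gets $\liminf_k |\Omega|^{\alpha/2}k^{-\alpha/2}M_k(\Omega)\ge F(H)$, using (H3)(i) to identify $\gamma(6)$ with $F(H)$.

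I expect the main obstacle to be the rigorous passage from arbitrary convex cells to polygons with a \emph{controlled} number of sides while keeping the area and the $F$-value under control simultaneously: a convex body has no well-defined number of sides, so one must introduce an approximation parameter, replace each $E_i$ by an inscribed polygon $P_i\subseteq E_i$ with $n_i$ sides meeting $\partial\Omega$ appropriately so that the $P_i$ still form an honest planar polygonal partition to which Euler's formula applies, and show that this can be done with $\sum(|E_i|-|P_i|)$ negligible and $n_i$ bounded on average. The second delicate point is handling the cells that touch $\partial\Omega$: these may have ``extra'' sides lying on the boundary, and one must verify that their contribution to $\sum n_i$ is $O(\sqrt k)$ and hence does not spoil the average-$\le 6$ estimate after division by $k$. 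Once these two technical reductions are in place, the rest is the elementary optimization described above together with a direct invocation of (H3)(ii).
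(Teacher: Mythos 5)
Your upper bound and the final optimization step (from $\max_i F\ge \gamma(n_i)|\cdot|^{-\alpha/2}$, $\sum_i|\cdot|\le|\Omega|$ and the average-side bound to (H3)(ii)) are in the same spirit as the paper. But the crucial reduction from convex cells to polygons is done in the wrong direction, and this is a genuine gap, not a technicality. You propose to replace each $E_i$ by an \emph{inscribed} polygon $P_i\subseteq E_i$, ``using (H1) to only decrease $F$''. Since (H1) here says $F$ is \emph{decreasing} under inclusion, inscribing gives $F(P_i)\ge F(E_i)$, i.e.\ an upper bound on $F(E_i)$, which is useless for the lower bound on $M_k$. Worse, the Euler-formula estimate $\frac1k\sum_i n_i\lesssim 6$ is simply false for an arbitrary family of disjoint convex polygons: it requires every side of every cell to be in contact (in its relative interior) with a neighbouring cell or with the outer boundary, otherwise $k$ tiny disjoint $100$-gons are a counterexample. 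Inscribing polygons inside disjoint convex bodies destroys all contacts, so the planar-graph count you invoke gives nothing. The paper resolves exactly this point by the opposite move (its Lemma \ref{FT}): enlarge the cluster $\{E_i\}$ to a family of convex polygons $P_i\supseteq E_i$ maximizing $\sum_i|P_i|$; maximality forces each $P_i$ to be a polygon each of whose sides touches another cell or the container, so Euler's formula applies and yields $\frac1k\sum n_i\le 6$, while (H1) now gives $F(E_i)\ge F(P_i)$ and the enlarged cells still have disjoint interiors, so $\sum_i|P_i|$ is controlled. Any correct proof along your lines needs this circumscription/saturation step (or an equivalent), and your proposal does not contain it.

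A secondary difference: you work directly on a general $\Omega$ and assert that boundary cells contribute only an $O(\sqrt k)$ correction to $\sum_i n_i$. The paper instead first reduces, by a dilation/monotonicity argument (Lemma \ref{t:helffer}), to proving the \emph{exact} identity $M_k(\Omega_k)=F(H)$ on $k$-hexagonal structures, and further to $k$-triangles, where the boundary contribution to the side count is bounded by the absolute constant $6$ via a specific geometric (translation) argument. Your $O(\sqrt k)$ boundary claim for a general Lipschitz $\Omega$ is plausible but not justified as stated (the contact components with $\partial\Omega$ must be counted with care, as in the paper's Lemma \ref{l:Euler}); the reduction to hexagonal structures is precisely how the paper avoids having to prove it.
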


\bigskip
\begin{remark}\label{r:FK}
\begin{itemize}
\item[(i)] Notice that assumption (H3) (ii)  in Theorem \ref{t:honeycomb}  is less stringent that the same assumption in  Theorem \ref{t:honeycomb0}, because
the exponent $2/\alpha$   appearing therein is strictly larger than its corresponding one $2/(\alpha +2)$ .

\smallskip

\item[(ii)]
If the functional $F$ satisfies a discrete Faber-Krahn inequality with regular polygons as optimal domains,
we can give an easier to handle sufficient condition for the validity of
hypothesis (H3). Namely  assume that, for every $n \in \N$, the minimum $\gamma(n)$ is achieved for  the  regular $n$-gon with unit area $P _n ^*$, that is $\gamma (n) = F ( P _n ^*)$; this can be equivalently stated in the form of an inequality, to which we refer as the discrete Faber-Krahn inequality:
$$F(P_n)\le F(P_n^*),\ \text{for every n-gon}\  P_n\in\mathcal K ^ 2\ \text{of unit area}.$$

\noindent Then, in order to check (H3), it is enough to show that the following condition is satisfied
(with $\beta = 2/(\alpha +2)$ or $\beta = 2/\alpha$ respectively in case of Theorem \ref{t:honeycomb0} or Theorem \ref{t:honeycomb}):

\bigskip
\begin{itemize}
\item[(H3)']
the map
$n \mapsto F ( P _n ^*) ^ {\beta}$  admits a decreasing and convex extension $\varphi$ on $[3, +\infty)$.
\end{itemize}

\bigskip
\noindent Indeed, part (i) of hypothesis (H3) is clearly true since the Faber-Krahn inequality for $n=6$ yields $\gamma (6) = F ( P _ 6 ^*) = F (H)$, whereas part (ii)   follows from (H3)' and the Jensen's inequality:
$$\frac{1}{k}  \sum _{i=1} ^ k \gamma (n _i ) ^ {\beta}
= \frac{1}{k}  \sum _{i=1} ^ k \varphi (n_i )
 \geq \varphi \Big ({\frac{1}{k} \sum _{i=1} ^ k n _i}  \Big ) \geq \varphi (6) = \gamma (6)   ^{\beta} \,.   $$

\item[(iii)] If in addition the discrete Faber-Krahn inequality holds in a quantitative way (see Remark \ref{rem:quantitativeFK}),  it is reasonable to expect that, beyond \eqref{f:conj0}-\eqref{f:conj}, one can obtain the convergence of any optimal partition to a hexagonal tiling.
\end{itemize}
\end{remark}

\bigskip
We are now going to state a dual counterpart of Theorem \ref{t:honeycomb} for increasing functionals.

In this case,   we have to work with convex partitions which saturate the domain $\Omega$,
otherwise the infimum of the optimal partition problem would become zero (simply by taking $k$ disjoint balls of infinitesimal radius
contained into $\Omega$).
Thus we introduce the following

\begin{definition} {We denote by $\P _k (\Omega)$ the class of {\it convex $k$-partitions} of $\Omega$, meant as families
$\{ P _i \} _{\{i = 1, \dots, k \}}$
 of subsets of $\R ^2$ such that:}

 \smallskip
 \begin{itemize}
 \item[{-}] $P_i$ is a polygon in $\mathcal K ^ 2$ for every $i$;

 \smallskip
 \item[{-}] {$\bigcup _i \big (P _ i \cap \overline \Omega \big ) = \overline \Omega$;}

 \smallskip
 \item[{-}] $|P _i \cap P _j | = 0$ for every $i\neq j$.
 \end{itemize}
\end{definition}

\begin{theorem}\label{t:honeycomb2}  Let $\Omega$ be an open bounded convex subset of $\R ^2$.
Assume that $F:\K ^ 2 \to [0, + \infty)$ satisfies the following conditions:
\begin{itemize}
\item[(H1)] Domain monotonicity:
$$\Omega _ 1 \subseteq \Omega _ 2 \Rightarrow F (\Omega _ 1 ) \leq F (\Omega _2)\,. $$
\item[(H2)] Homogeneity:
$$\exists\, \alpha >0 \ :\ F ( t \Omega) = t ^ { \alpha} F (\Omega)\qquad \hbox{ for every } t >0\,.$$
 \item[(H3)] Behaviour on polygons: setting
 $$\gamma (n) := \min \Big \{ F (P) |P| ^ {-\alpha/2} \ :\ P \  \text{n-gon in }     \mathcal K ^ 2  \} \qquad \forall n \in \N \, , $$

\medskip
\begin{itemize}
\item[(i)] $\displaystyle \gamma (6) = F (H)$;
 \smallskip
\item[(ii)] {it is possible to extend $\gamma$ to a function defined on $[3, + \infty)$ which is continuous at the point $6$ and is such that, for some $k _0 \in \N$ and $\delta _0 >0$,  }

\medskip

 \end{itemize}
 $\displaystyle \frac{1}{k} \sum _{i=1} ^ k n _i \leq 6  + \delta\, ,  \text{ with  } k \geq k _0 \text{ and } 0<\delta \leq \delta _0 \ \Rightarrow \ \frac{1}{k}  \sum _{i=1} ^ k \gamma (n _i ) ^ {-2/\alpha} \leq  \gamma  \big (6  + \delta \big ) ^ {-2/\alpha}$.

 \end{itemize}

 \smallskip
Then, in the limit as $k \to + \infty$, the optimal partition problem
$$
m _{k} (\Omega) = \inf  \Big \{ \max_{i = 1, \dots, k}  F (P _i \cap \Omega) \ :\  \{P_i\} \in \P _k (\Omega) \Big \}
$$
is solved by a packing of regular hexagons, namely it holds
\begin{equation}\label{f:conj2}
\lim_{k \to + \infty} \frac{ k ^ {\alpha/2} } { |\Omega| ^ {\alpha /2} }  {m _k (\Omega)}  =
F ( H) \,.
\end{equation}
\end{theorem}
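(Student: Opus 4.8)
The plan is to establish, separately, the two bounds $\limsup_k \frac{k^{\alpha/2}}{|\Omega|^{\alpha/2}} m_k(\Omega) \le F(H)$ and $\liminf_k \frac{k^{\alpha/2}}{|\Omega|^{\alpha/2}} m_k(\Omega) \ge F(H)$. The only ``analytic'' ingredient is the immediate consequence of the definition of $\gamma$ --- for every convex $n$-gon $E \in \K^2$ one has $F(E) \ge \gamma(n)\, |E|^{\alpha/2}$ --- together with the identity $F(R_a) = a^{\alpha/2} F(H)$, where $R_a$ denotes the regular hexagon of area $a$, coming from (H2) since $R_a=\sqrt a\,H$. Everything else is the combinatorial Euler formula and the convexity-type inequality (H3)(ii).

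For the upper bound I would tile the plane by regular hexagons of area $a_k := (1+t_k)\,|\Omega|/k$ and keep only those meeting $\overline\Omega$: they form a family in $\P_{k'}(\Omega)$ with $k' \le |\Omega|/a_k + C_\Omega \sqrt{k}$, so a choice of $t_k$ of order $k^{-1/2}$ makes $k' \le k$ while still $t_k \to 0$. By (H1)--(H2) each such cell satisfies $F(P_i \cap \Omega) \le F(P_i) = a_k^{\alpha/2} F(H)$, and since $k \mapsto m_k(\Omega)$ is non-increasing (split any cell by a line segment) we get $m_k(\Omega) \le m_{k'}(\Omega) \le (1+t_k)^{\alpha/2}(|\Omega|/k)^{\alpha/2} F(H)$; letting $k \to \infty$ gives the upper bound.

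For the lower bound I would first reduce to $\Omega$ being a convex polygon, so that the cells become genuine polygons: if $\Omega' \subseteq \Omega$ is a convex polygon with $|\Omega'| \ge (1-\e)|\Omega|$ then every $\{P_i\} \in \P_k(\Omega)$ also lies in $\P_k(\Omega')$ and $\max_i F(P_i \cap \Omega) \ge \max_i F(P_i \cap \Omega')$, so $m_k(\Omega) \ge m_k(\Omega')$, and once the statement is proved for polygons, letting $\e \to 0$ yields it for $\Omega$. So let $\Omega$ be a convex polygon and $\{P_i\}$ a partition with $m := \max_i F(P_i \cap \Omega)$ close to $m_k(\Omega)$; by the upper bound we may assume $m \le 2 F(H) (|\Omega|/k)^{\alpha/2}$, and then $F(E) \ge \gamma(n)|E|^{\alpha/2}$ together with $\inf_n \gamma(n) > 0$ give $|E_i| \le C|\Omega|/k$ for every cell $E_i := P_i \cap \Omega$. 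The $E_i$ are convex polygons --- say $n_i$-gons --- forming a planar subdivision of $\Omega$; since every interior vertex has degree $\ge 3$ and $\partial\Omega$ is a single cycle, Euler's formula gives
\[
\sum_{i=1}^k n_i \ \le\ 6k - 6 + E_{\mathrm{bd}},
\]
where $E_{\mathrm{bd}}$ is the number of subdivision edges contained in $\partial\Omega$. Granting for the moment that $E_{\mathrm{bd}} = o(k)$, put $\delta_k := \big(\tfrac1k \sum_i n_i - 6\big)_+ \le E_{\mathrm{bd}}/k \to 0$; then (H3)(ii) gives $\tfrac1k \sum_i \gamma(n_i)^{-2/\alpha} \le \gamma(6+\delta_k)^{-2/\alpha}$, which combined with $|\Omega| = \sum_i |E_i| \le m^{2/\alpha}\sum_i \gamma(n_i)^{-2/\alpha}$ yields $\frac{k^{\alpha/2}}{|\Omega|^{\alpha/2}} m \ge \gamma(6+\delta_k)$. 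Letting $k \to \infty$ and using the continuity of $\gamma$ at $6$ and (H3)(i), the right-hand side tends to $\gamma(6) = F(H)$, which is the lower bound.

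The step I expect to be the main obstacle is precisely $E_{\mathrm{bd}} = o(k)$, i.e. that a near-optimal convex partition of $\Omega$ has average number of sides $\le 6 + o(1)$; this is where the convexity of $\Omega$ enters (each cell meets $\partial\Omega$ along a connected arc, and the cells touching $\partial\Omega$ tile a thin collar of $\Omega$). The delicate point is that, for a general functional $F$, one has only the qualitative bound $|E_i| \lesssim |\Omega|/k$ and no quantitative Faber-Krahn inequality (cf. Remark \ref{r:FK}(iii)), so one cannot simply assert that the boundary cells are ``round'' of diameter $\sim k^{-1/2}$; one must exclude, for a near-minimizer, the coexistence of $\Theta(k)$ cells meeting $\partial\Omega$ together with an excess of sides large enough to saturate the Euler slack $E_{\mathrm{bd}}$ in the displayed inequality. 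Once this geometric fact is secured, the $+\,\delta$ deliberately built into hypothesis (H3)(ii) is exactly what absorbs the residual boundary error, and the lower bound follows from the short chain above.
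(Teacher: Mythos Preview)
Your overall architecture---upper bound via a hexagonal test partition, lower bound by reducing to a convex polygon and then combining an Euler-type inequality with (H3)(ii)---is correct and is essentially the paper's strategy. The upper bound is fine. The gap is precisely where you place it, but it is not the obstacle you fear: you are using too weak a form of the Euler inequality.

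The sharp bound (this is Lemma~\ref{l:Euler} in the paper) is
\[
\sum_{i=1}^{k} n_i \;\le\; 6k - 6 + n_{\Omega'}\,,
\]
where $n_{\Omega'}$ is the number of sides of the \emph{ambient} convex polygon---a fixed constant in your setup---rather than the number $E_{\rm bd}$ of boundary edges of the subdivision. Hence $\delta_k = (n_{\Omega'}-6)_+/k = O(1/k)$ automatically, and your chain $|\Omega'| = \sum_i |E_i| \le m^{2/\alpha}\sum_i\gamma(n_i)^{-2/\alpha} \le k\, m^{2/\alpha}\,\gamma(6+\delta_k)^{-2/\alpha}$ closes with no information on the boundary cells whatsoever. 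The argument goes through the dual adjacency graph on $P_0,P_1,\dots,P_k$ (with $P_0$ the exterior of $\Omega'$; one edge per pair with $\mathcal H^1(\partial P_i\cap\partial P_j)>0$, and one edge per connected component when $j=0$): Euler's formula with $3F\le 2E$ gives $E\le 3k-3$. Splitting $E=E_{\rm in}+E_{\rm out}$ and $\sum_i n_i = N_{\rm in}+N_{\rm out}$ (cell-sides off and on $\partial\Omega'$), one has $N_{\rm in}\le 2E_{\rm in}$ as usual; the estimate you are missing is
\[
N_{\rm out}\;\le\; n_{\Omega'}+2E_{\rm out}\,,
\]
which holds because each connected component of $\partial E_i\cap\partial\Omega'$ contributes at most two sides in excess of the original $n_{\Omega'}$ sides of $\Omega'$. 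Summing, $\sum_i n_i\le 2E_{\rm in}+2E_{\rm out}+n_{\Omega'}= 2E+n_{\Omega'}\le 6k-6+n_{\Omega'}$. So the whole discussion of whether $\Theta(k)$ cells can touch $\partial\Omega'$ is irrelevant: they may, but the $2E_{\rm out}$ they contribute is already absorbed in the Euler bound, leaving only the constant $n_{\Omega'}$.

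For comparison, the paper does \emph{not} approximate $\Omega$ by a fixed polygon; it scales $\Omega$ by $\rho^{\rm int}(k,\Omega)$ and takes for $Q_k$ the convex hull of an inscribed $k$-hexagonal structure, which has $O(\sqrt k)$ sides (Lemma~\ref{convex-envelope}), and then applies Lemma~\ref{l:Euler} with $n_{Q_k}=O(\sqrt k)$ to obtain $\delta_k=O(k^{-1/2})$. Your route through a fixed polygonal approximation $\Omega'\subset\Omega$ followed by $\e\to 0$ is a little more direct and, once the Euler step is sharpened as above, yields a complete proof without the extra geometric lemma on $Q_k$.
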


\begin{remark}\label{r:FK2}
Notice that condition (H3) (ii) in Theorem \ref{t:honeycomb2} is a little bit more involved than the corresponding one in Theorem \ref{t:honeycomb}. The reason is that, when passing from decreasing to increasing functionals, we have to deal with convex $k$-partitions rather than convex $k$-clusters,  and consequently a key argument in the proof needs to be adapted and refined (see Remark \ref{r:proof} for more details).
However we emphasize that, when $F$ satisfies a discrete Faber-Krahn inequality  stating that
 $\gamma (n) = F ( P _n ^*)$  (with $P _n ^*$ the  regular $n$-gon with unit area), one can still formulate a simpler sufficient condition for the validity of assumption (H3) in Theorem \ref{t:honeycomb2}, which in this case reads
\smallskip
\begin{itemize}
\item[(H3)']
the map
$n \mapsto F ( P _n ^*) ^ {-2/\alpha}$ admits an increasing and concave extension $\varphi$ on $[3, + \infty)$.
\end{itemize}
This is readily checked by arguing as in Remark \ref{r:FK}.
\end{remark}

\begin{remark}
We point out  that Theorem \ref{t:honeycomb2} applies in particular to the shape functional
$F(\Omega)  = {\rm Per}(\Omega)$. Indeed, assumptions (H1) and (H2) are fulfilled because perimeter  is monotone increasing on convex sets under domain inclusion, and positively homogeneous of degree $\alpha = 1$. Moreover, since the regular $n$-gon minimizes perimeter among $n$-gons with given area, for the validity of (H3) it is enough to check condition (H3)', which is immediate by using the formula
$${\rm Per }  ( P _n ^*) = 2 \sqrt { n \tan \Big ( \frac{\pi}{n} \Big ) }\,.$$

\end{remark}

\bigskip
We present now the application of Theorems \ref{t:honeycomb0}, \ref{t:honeycomb} and \ref{t:honeycomb2} to some relevant examples of shape functionals $F$ of variational type.

\subsection {The Cheeger constant}

Let us recall that the  Cheeger constant of $\Omega$ is defined by
\begin{equation}\label{f:defh}
h (\Omega):=\inf \left \{ \frac{{\rm Per} (E, \R ^2)}{|E|}\ :\ E \hbox{ measurable}\, , \ {E\subseteq   \Omega} \right \}\,,
\end{equation}
where ${\rm Per}(  E, \R ^2)$ denotes the perimeter of $E$ in the sense of De Giorgi.
The minimization problem (\ref{f:defh}) is  named after J. Cheeger, who introduced it in \cite{Ch} and proved the inequality
$\lambda _ 1 (\Omega) \geq  ( h (\Omega) /2) ^ 2 $.
In recent years the Cheeger constant has attracted an increasing attention: we address the interested reader to the review papers \cite{Leo, Pa} and to the numerous references therein.
Let us also mention that optimal partition problems for the Cheeger constant have been recently considered in \cite{Car15},
under the form \eqref{f:pb22}, and with the aim of finding bounds for the asymptotics of the same problem for the first Dirichlet eigenvalue.\\

We claim that Theorems \ref{t:honeycomb0} and \ref{t:honeycomb} apply to $F (\Omega): =h (\Omega)$.

Indeed, it is immediate from its definition that $h (\Omega)$ satisfies the monotonicity assumption (H1) and the homogeneity assumption (H2) (with $\alpha = 1$).

Concerning assumption (H3) (i), in view of  Remark \ref{r:FK} (ii), we recall that
the regular $n$-gon $P _n ^*$ of unit area minimizes the Cheeger constant $h$ among all polygons of the same area and same number of sides (even without the convexity constraint). This result has been recently proved in \cite{BF16}  (incidentally, in the light of Remark \ref{r:FK} (iii), let us also mention that a
quantitative version of such result has appeared in \cite{CarQuantitative}).

In order to check (H3) (ii), it is enough to show that the map $n \mapsto h( P _n ^*)^{ \beta}$
 admits a decreasing convex extension on $[3, + \infty)$,
where the exponent $\beta$ equals $2/3$ and $2$ in case respectively of
 Theorems \ref{t:honeycomb0} and \ref{t:honeycomb}.
This is readily done in view of the explicit expression of $ h( P _n ^*)$, which reads (see for instance \cite{BF16} or \cite{KaLR}):
$$ h( P _n ^*) = \frac{2n\sin(\pi/n)+\sqrt{2\pi n\sin(2\pi/n)}}{\sqrt{2 n\sin(2\pi/n)}}\,.$$
For completeness, the computations are included in the Appendix, Lemma \ref{giamma0} and Lemma \ref{giamma1}. We reassume the results obtained in the discussion above in the following:

\begin{corollary}\label{p:h0}
Let
\begin{equation}
\displaystyle m _{k} (\Omega) = \inf  \Big \{ \sum_{i = 1}^ k  h (E _i) \ :\  \{E_i\} \in \C _k (\Omega) \Big \} \, .
\end{equation}
Then it holds
\begin{equation}
\lim_{k \to + \infty}\frac{|\Omega| ^ {1 /2}} {k ^ {3/2}}  m _k (\Omega)  =
h ( H) \,.
\end{equation}
\end{corollary}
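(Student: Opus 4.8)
The plan is to obtain Corollary \ref{p:h0} as a direct application of Theorem \ref{t:honeycomb0} to the shape functional $F(\Omega) = h(\Omega)$, so the whole task reduces to checking that the Cheeger constant satisfies hypotheses (H1)--(H3) with homogeneity exponent $\alpha = 1$. First I would dispose of the two structural assumptions. Monotonicity (H1) is immediate from the definition \eqref{f:defh}: if $\Omega_1 \subseteq \Omega_2$, every measurable $E \subseteq \Omega_1$ is also a competitor for $h(\Omega_2)$, so the infimum can only decrease, i.e.\ $h(\Omega_1) \ge h(\Omega_2)$. Homogeneity (H2) with $\alpha = 1$ follows because under a dilation $x \mapsto tx$ the De Giorgi perimeter of any competitor scales by $t$ and its Lebesgue measure by $t^2$, whence $h(t\Omega) = t^{-1} h(\Omega)$.

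The substantial point is assumption (H3). For part (i), I would invoke the discrete Faber--Krahn inequality for the Cheeger constant established in \cite{BF16}: among all $n$-gons of prescribed area the regular one minimizes $h$ (even without the convexity constraint). Hence $\gamma(n) = h(P_n^*)$ for every $n$, and in particular $\gamma(6) = h(P_6^*) = h(H)$, which is (H3)(i). This also places us exactly in the framework of Remark \ref{r:FK}(ii), so for part (ii) it is enough to verify condition (H3)' with $\beta = 2/(\alpha+2) = 2/3$, namely that the map $n \mapsto h(P_n^*)^{2/3}$ admits a decreasing, convex extension $\varphi$ on $[3,+\infty)$.

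To check (H3)' I would use the closed expression
$$h(P_n^*) = \frac{2n\sin(\pi/n) + \sqrt{2\pi n\sin(2\pi/n)}}{\sqrt{2n\sin(2\pi/n)}},$$
reading the right-hand side as a smooth function of a real variable $n \ge 3$, and show that it is decreasing and that its $2/3$-power is convex; this is precisely the computation carried out in the Appendix as Lemma \ref{giamma0}. Granting (H3)', Jensen's inequality yields (H3)(ii) verbatim as in Remark \ref{r:FK}(ii): whenever $\frac1k\sum_{i=1}^k n_i \le 6$ one has $\frac1k\sum_{i=1}^k \gamma(n_i)^{2/3} = \frac1k\sum_{i=1}^k \varphi(n_i) \ge \varphi\bigl(\frac1k\sum_{i=1}^k n_i\bigr) \ge \varphi(6) = \gamma(6)^{2/3}$, using monotonicity of $\varphi$ in the last step.

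With (H1), (H2) and (H3) verified, Theorem \ref{t:honeycomb0} applies with $\alpha = 1$ and gives $\lim_{k\to+\infty} |\Omega|^{1/2} k^{-3/2} m_k(\Omega) = h(H)$, which is the assertion. The only genuinely non-formal ingredient is the monotonicity and convexity analysis of the elementary but somewhat unwieldy function $h(P_n^*)^{2/3}$; I expect this to be the main technical obstacle, best handled by changing variable to $s = 1/n \in (0,1/3]$ and estimating the sign of the relevant first and second derivatives (the routine calculus deferred to the Appendix).
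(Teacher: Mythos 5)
Your proposal is correct and follows exactly the paper's route: the corollary is deduced from Theorem \ref{t:honeycomb0} with $\alpha=1$ by checking (H1)--(H2) directly from the definition of $h$, verifying (H3)(i) via the Faber--Krahn inequality for the Cheeger constant of $n$-gons from \cite{BF16}, and obtaining (H3)(ii) through the sufficient condition (H3)' with $\beta=2/3$ and Jensen's inequality, the monotonicity and convexity of $n\mapsto h(P_n^*)^{2/3}$ being precisely the content of Lemma \ref{giamma0} in the Appendix. Nothing essential is missing.
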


\begin{corollary}\label{p:h}
Let
\begin{equation}
\displaystyle M _{k} (\Omega) = \inf  \Big \{ \max_{i = 1, \dots, k}  h (E _i) \ :\  \{E_i\} \in \C _k (\Omega) \Big \} \, .
\end{equation}
Then it holds
\begin{equation}
\lim_{k \to + \infty}\frac{|\Omega| ^ {1 /2}} {k ^ {1/2}}  M _k (\Omega)  =
h ( H) \,.
\end{equation}
\end{corollary}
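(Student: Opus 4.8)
The plan is to deduce Corollary \ref{p:h} as an immediate application of Theorem \ref{t:honeycomb} with the choice $F(\Omega) = h(\Omega)$, so that the entire argument reduces to checking that the Cheeger constant satisfies hypotheses (H1), (H2), (H3). Assumption (H1) is immediate from the definition \eqref{f:defh}: if $\Omega_1 \subseteq \Omega_2$, then every set $E$ admissible for $h(\Omega_1)$ is also admissible for $h(\Omega_2)$, whence $h(\Omega_1) \geq h(\Omega_2)$. Assumption (H2) with $\alpha = 1$ follows from the scaling identities $\mathrm{Per}(tE,\R^2) = t\,\mathrm{Per}(E,\R^2)$ and $|tE| = t^2|E|$, which give $h(t\Omega) = t^{-1} h(\Omega)$ for every $t > 0$.

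For (H3) I would invoke the sufficient condition of Remark \ref{r:FK}(ii). The discrete Faber--Krahn inequality for the Cheeger constant --- namely that the unit-area regular $n$-gon $P_n^*$ minimizes $h$ among all $n$-gons of unit area, a fortiori among the convex ones --- is exactly the result of \cite{BF16}; hence $\gamma(n) = h(P_n^*)$ for every $n$, and in particular, since $P_6^* = H$, part (i) of (H3) holds: $\gamma(6) = h(P_6^*) = h(H)$. It then remains to verify (H3)$'$, that is, that the map $n \mapsto h(P_n^*)^\beta$ with $\beta = 2/\alpha = 2$ admits a decreasing and convex extension to $[3, +\infty)$. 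Here I would use the closed form
$$h(P_n^*) = \frac{2n\sin(\pi/n)+\sqrt{2\pi n\sin(2\pi/n)}}{\sqrt{2 n\sin(2\pi/n)}}$$
(see \cite{BF16, KaLR}), treat $n$ as a real variable, and study $g(n) := h(P_n^*)^2$ on $[3,+\infty)$; once $g$ is shown to be decreasing and convex there, the Jensen argument spelled out in Remark \ref{r:FK}(ii) yields precisely (H3)(ii).

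Granted (H1)--(H3), Theorem \ref{t:honeycomb} applies directly and gives
$$\lim_{k \to +\infty} \frac{|\Omega|^{1/2}}{k^{1/2}}\, M_k(\Omega) = h(H),$$
which is the asserted identity.

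The one substantial point is the monotonicity-and-convexity analysis of $g(n) = h(P_n^*)^2$ on the real interval $[3,+\infty)$: this is a genuine, if elementary, one-variable calculus task requiring controlled estimates on $\sin(\pi/n)$, $\sin(2\pi/n)$ and their derivatives. In practice I would substitute $x = \pi/n \in (0, \pi/3]$, rewrite $g$ as a function of $x$, and either compute the sign of the first and second derivatives or --- more robustly --- decompose $g$ into a sum of manifestly decreasing and convex pieces. These computations are carried out in the Appendix, Lemma \ref{giamma1}, to which the proof defers; all the rest is a mechanical substitution into the statement of Theorem \ref{t:honeycomb}.
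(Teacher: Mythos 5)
Your proposal is correct and follows essentially the same route as the paper: verify (H1), (H2) with $\alpha=1$ directly from the definition of $h$, obtain (H3)(i) from the Faber--Krahn inequality of \cite{BF16}, and check (H3)(ii) via the sufficient condition (H3)$'$ with $\beta=2/\alpha=2$, the monotonicity and convexity of $n\mapsto h(P_n^*)^2$ being exactly the content of Lemma \ref{giamma1} in the Appendix. Nothing is missing; this is the paper's own argument.
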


\bigskip

\subsection{The first Dirichlet Laplacian eigenvalue.}

Let us now consider the case of the first Dirichlet eigenvalue of the Laplacian,  $F (\Omega) = \lambda _1 (\Omega)$.
We observe that the monotonicity and homogeneity assumptions (H1) and (H2) of Theorem \ref{t:honeycomb} are satisfied (with $\alpha = 2$).  On the other hand, the Faber-Krahn inequality for the fist Dirichlet eigenvalue of polygons is a long-standing open  problem
(for a discussion, see for instance \cite[Section 3.3]{H06}).  Thus we have to work directly on the validity of assumption (H3),
handling the function $\gamma (n)$ defined by
\begin{equation}\label{f:gl1}
\gamma (n) := \min  \Big \{ \lambda _ 1 (P) |P|  \ :\ P \  \text{n-gon in }     \mathcal K ^ 2  \Big \}\,.
\end{equation}

As a consequence of Theorems  \ref{t:honeycomb0} and \ref{t:honeycomb}, we can assert that the honeycomb conjecture for $\lambda _1$ holds true {\it provided} one has some piece of information about the behaviour of $\gamma (n)$ {\it just} for $n = 5,6,7$.
Indeed we have the following result:

\begin{proposition}\label{p:l1}
Let
\begin{eqnarray}
\displaystyle m _{k} (\Omega) = \inf  \Big \{ \sum_{i = 1}^ k  \lambda _1 (E _i) \ :\  \{E_i\} \in \C _k (\Omega) \Big \}
&
\\
\displaystyle M _{k} (\Omega) = \inf  \Big \{ \max_{i = 1, \dots, k}  \lambda _1 (E _i) \ :\  \{E_i\} \in \C _k (\Omega) \Big \}
&
\, ,
\end{eqnarray}
and assume that the map $\gamma (n)$ defined by \eqref{f:gl1} satisfies
\begin{eqnarray}
\gamma (6) = \lambda _ 1 (H) \qquad \qquad \qquad & \label{i1}
\\ \noalign{\medskip}
\gamma (5) \geq a:=  6,022 \pi  \ \ \text{  and } \ \ \gamma (7) \geq b:= 5.82 \pi & \label{i2}
\end{eqnarray}
\medskip
Then it holds
\begin{eqnarray}
\lim_{k \to + \infty}\frac{|\Omega|} {k^2 }   m _k (\Omega)  =
\lambda _1 ( H) \,. &
\\
\lim_{k \to + \infty}\frac{|\Omega|} {k }   M _k (\Omega)  =
\lambda _1 ( H) \,. &
 \end{eqnarray}

\end{proposition}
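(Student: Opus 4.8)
The plan is to derive both asymptotic laws from Theorem~\ref{t:honeycomb0} (for $m_k$) and Theorem~\ref{t:honeycomb} (for $M_k$), both applied to $F=\lambda_1$. Assumptions (H1) and (H2) hold trivially with $\alpha=2$, so the exponent appearing in assumption (H3)(ii) is $\beta=2/(\alpha+2)=\tfrac12$ for Theorem~\ref{t:honeycomb0} and $\beta=2/\alpha=1$ for Theorem~\ref{t:honeycomb}, while (H3)(i) is exactly the hypothesis \eqref{i1}. Hence everything reduces to checking (H3)(ii) for the function $\gamma$ of \eqref{f:gl1}, for each of the two values $\beta\in\{\tfrac12,1\}$: that is, that
\[
n_1,\dots,n_k\in\N,\quad \frac1k\sum_{i=1}^k n_i\le 6\qquad\Longrightarrow\qquad \frac1k\sum_{i=1}^k\gamma(n_i)^{\beta}\ge\gamma(6)^{\beta}.
\]

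I would prove this by producing, for each $\beta$, an affine function $\ell(t)=At+B$ (depending on $\beta$) with $A\le 0$, $\ell(6)=\gamma(6)^{\beta}$, and $\gamma(n)^{\beta}\ge\ell(n)$ for every integer $n\ge 3$. Granting such an $\ell$, since $t\mapsto t$ is affine and $A\le 0$ one gets
\[
\frac1k\sum_{i=1}^k\gamma(n_i)^{\beta}\ \ge\ \frac1k\sum_{i=1}^k\ell(n_i)\ =\ A\Big(\frac1k\sum_{i=1}^k n_i\Big)+B\ \ge\ 6A+B\ =\ \ell(6)\ =\ \gamma(6)^{\beta},
\]
which is precisely (H3)(ii). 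So the real content is the construction of $\ell$, i.e.\ of a line through $(6,\gamma(6)^{\beta})$ with nonpositive slope lying below the graph of $n\mapsto\gamma(n)^{\beta}$ at all integers $n\ge3$.

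For this I would collect the following information on $\gamma(n)=\min\{\lambda_1(P)\,|P|:P\ \text{a convex }n\text{-gon}\}$: $(a)$ $\gamma$ is nonincreasing in $n$ (approximation of $n$-gons by $(n+1)$-gons); $(b)$ $\gamma(4)=2\pi^2$, since the square is the optimal quadrilateral (P\'olya), whence also $\gamma(3)\ge 2\pi^2$ by $(a)$; $(c)$ $\gamma(n)\ge\pi j_{0,1}^2$ for all $n$ by the Faber--Krahn inequality for the disk ($j_{0,1}$ being the first zero of $J_0$); $(d)$ $\gamma(5)\ge a$, $\gamma(6)=\lambda_1(H)$ and $\gamma(7)\ge b$, from \eqref{i1}--\eqref{i2}, together with a sufficiently sharp rigorous bound on $\lambda_1(H)$. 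Writing $B=\gamma(6)^{\beta}-6A$, the requirement $\gamma(n)^{\beta}\ge\ell(n)$ becomes, for each $n\ne 6$, a one-sided bound on the slope $A$: a lower bound $A\ge\big(\gamma(6)^{\beta}-\gamma(n)^{\beta}\big)/(6-n)$, a nonpositive number, when $n<6$ (handled by $(d)$ for $n=5$ and by $(a)$--$(b)$ for $n=3,4$), and an upper bound $A\le\big(\gamma(n)^{\beta}-\gamma(6)^{\beta}\big)/(n-6)$, again nonpositive, when $n>6$ (handled by $(d)$ for $n=7$, and — since $\ell$ is decreasing — by $(c)$ for all $n\ge 8$ at once). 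A slope $A\le 0$ meeting all these bounds exists iff the largest of the lower bounds does not exceed the smallest of the upper bounds; one checks that the binding pair is $n=5$ and $n=7$, so that this amounts to
\[
\gamma(6)^{\beta}\ \le\ \tfrac12\big(\gamma(5)^{\beta}+\gamma(7)^{\beta}\big),\qquad\text{for which it is enough that}\qquad \lambda_1(H)^{\beta}\le\tfrac12\big(a^{\beta}+b^{\beta}\big).
\]

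The only genuinely delicate point will be this last numerical inequality, for both $\beta=\tfrac12$ and $\beta=1$. The constants $a$ and $b$ in \eqref{i2} are calibrated precisely so that the interval of admissible slopes is (barely) nonempty; this is why one has to use the sharp value $2\pi^2$ at $n=4$, the disk bound $\pi j_{0,1}^2$ for $n\ge 8$, and a rigorous two-sided estimate of $\lambda_1(H)$ (roughly $\lambda_1(H)\lesssim 18.60$), all of which leave very little room. With these inputs in hand, the finitely many resulting inequalities (at $n=3,4,5,7$, the case $n\ge 8$ being subsumed by the $n=7$ bound) are elementary, and I would carry them out at the end.
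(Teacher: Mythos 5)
Your proposal is correct, but it verifies the key hypothesis (H3)(ii) by a genuinely different route than the paper. The paper proves the implication $\frac1k\sum n_i\le 6\Rightarrow\frac1k\sum\gamma^{1/2}(n_i)\ge\gamma^{1/2}(6)$ (Lemma \ref{f:implication3}) by induction on $k$, with an extensive case analysis on the largest and smallest exponents $n_1,n_2$ (cases $n_1\ge 9$, $n_1=8$, $n_1=7$, each with subcases), and then deduces the $\beta=1$ version from the $\beta=\frac12$ version by Cauchy--Schwarz. You instead exhibit a decreasing affine minorant $\ell$ of $n\mapsto\gamma(n)^{\beta}$ touching at $n=6$, which yields the implication in one line by averaging; this is in effect a discrete relaxation of the convexity condition (H3)' of Remark \ref{r:FK} (a supporting line at $6$ in place of a convex decreasing extension). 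Both arguments consume exactly the same numerical inputs --- $\gamma(3),\gamma(4)$ exact, $\gamma(5)\ge a$, $\gamma(7)\ge b$, the Faber--Krahn disk bound for $n\ge 8$, and the upper bound $\lambda_1^{1/2}(H)\le 2.433\,\pi^{1/2}$ --- and the binding inequality is the same in both, namely $2\gamma^{1/2}(6)\le\gamma^{1/2}(5)+\gamma^{1/2}(7)$, i.e.\ $2\cdot 2.433\le\sqrt{6.022}+\sqrt{5.82}$, which holds with a margin of about $2\cdot10^{-4}$ (this is Subcase (3a) in the paper). Your deferred numerical verification does go through: with the stated bounds the admissible slopes form the nonempty interval roughly $[-0.0210,-0.0205]\,\pi^{1/2}$ for $\beta=\tfrac12$, and the remaining constraints at $n=3,4$ and at $n\ge 8$ (where the second-tightest upper bound on the slope, $\tfrac12(\lambda_1^{1/2}(B)-\gamma^{1/2}(6))\ge -0.0145\,\pi^{1/2}$, comes from $n=8$) are strictly slack; similarly for $\beta=1$. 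The only imprecision is your phrase that the $n\ge 8$ constraints are ``subsumed by the $n=7$ bound'': they are not formally implied by it, but they are satisfied by any slope meeting the $n=7$ bound because their guaranteed thresholds are weaker --- a harmless rewording. Your one-shot argument is shorter and cleaner here; the paper's induction is more flexible in principle (it can pair several indices at once), but for these constants both succeed.
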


\begin{remark} Let us emphasize that the assumptions made on
$\gamma (5)$ and $\gamma (7)$ seem much easier to check than to
the assumption $\gamma (6) = F (H)$. Namely, while the latter corresponds exactly to prove Faber-Krahn inequality for the principal frequency of convex hexagons (and as such is quite challenging), for $\gamma (5)$ and $\gamma (7)$ we just ask estimates from below, which are likely more at hand through a numerical proof.  \end{remark}

\subsection{The logarithmic capacity.}
Let us recall that the logarithmic capacity of $\Omega$ is defined by
$$
{\rm LogCap} (\Omega) := \exp \Big[ { - \lim \limits _{|x| \to + \infty} ( u (x) - \ln |x| )\Big ] } \,,
$$
where $u$ is the equilibrium potential of $\Omega$, namely the unique solution to the Dirichlet boundary value problem
$$\begin{cases}
\Delta u = 0 & \text{ in } \R ^ 2 \setminus \overline \Omega
\\ \noalign{\smallskip}
u = 0 & \text{ on } \partial \Omega
\\ \noalign{\smallskip}
u (x) \sim \ln |x| & \text{ as } |x| \to + \infty\,.
\end{cases}
$$
It can also be identified with the conformal radius or  with the transfinite diameter of $\Omega$, for more details see  for instance \cite{Landkof, Cocu}.

We claim that Theorem \ref{t:honeycomb2} applies to $F (\Omega): ={\rm LogCap} (\Omega)$.
  Indeed, it is immediate from its definition that ${\rm LogCap} (\Omega)$ satisfies assumptions (H1) and (H2) (with $\alpha = 1$).
Moreover,  Solynin and Zalgaller  have proved in \cite{SoZa} that the regular $n$-gon with unit area $P _n ^*$ minimizes ${\rm LogCap} (\Omega)$ among polygons with the same area and number of sides (even without the convexity constraint).
Then, recalling Remark \ref{r:FK2}, in order to check assumption (H3)  it is sufficient to show that the  map
$n \mapsto {\rm LogCap} ( P _n ^*) ^{-2}$ admits an increasing and concave extension on $[3, + \infty)$.
This is readily done in view of the explicit expression of ${\rm LogCap} ( P _n ^*)$, which  reads (see \cite{SoZa})
$$
{\rm LogCap} ( P _n ^*)=\frac{\sqrt{n\,\tan\left(\frac{\pi}{n}\right)}\,\Gamma\left(1+\frac1n\right)}{\sqrt \pi\, 2^{2/n}\,\Gamma \left(\frac12+\frac1n\right)}\, ,
$$
where $\Gamma$ is the Euler Gamma function (see Lemma \ref{giamma2} in the Appendix).

Thus we have:

\begin{corollary}\label{p:LC}
Let
\begin{equation}
\displaystyle M _{k} (\Omega) = \inf  \Big \{ \max_{i = 1, \dots, k}  {\rm LogCap} (P _i \cap \Omega) \ :\  \{P_i\} \in \P_k (\Omega) \Big \} \, .
\end{equation}
Then it holds
\begin{equation}
\lim_{k \to + \infty}\frac{ |\Omega| ^ {1 /2} } {k ^ {1/2}}M _k (\Omega)  =
{\rm LogCap} ( H) \,.
\end{equation}
\end{corollary}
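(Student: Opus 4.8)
The statement to prove is Corollary \ref{p:LC}, which asserts the honeycomb asymptotics for the logarithmic capacity under the supremal, increasing-functional formulation \eqref{f:pb2}. The plan is to simply verify the hypotheses of Theorem \ref{t:honeycomb2} for $F(\Omega) := {\rm LogCap}(\Omega)$ and then invoke that theorem directly with $\alpha = 1$.

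\textbf{Step 1: Check (H1) and (H2).} Monotonicity under inclusion is immediate from the variational/potential-theoretic definition: enlarging $\Omega$ can only increase the equilibrium potential's ``capacity'', so $\Omega_1 \subseteq \Omega_2 \Rightarrow {\rm LogCap}(\Omega_1) \le {\rm LogCap}(\Omega_2)$, which is exactly (H1) in its increasing form. Homogeneity with $\alpha = 1$, i.e.\ ${\rm LogCap}(t\Omega) = t\,{\rm LogCap}(t\Omega)$... rather ${\rm LogCap}(t\Omega) = t\,{\rm LogCap}(\Omega)$, follows by rescaling the equilibrium potential $u$: if $u$ solves the exterior Dirichlet problem for $\Omega$, then $u(x/t) + \ln t$ solves it for $t\Omega$, and the subtracted logarithmic term picks up exactly a factor $t$ inside the exponential. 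Both of these are classical and can be dispatched in a line.

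\textbf{Step 2: Reduce (H3) to (H3)$'$ via the Solynin--Zalgaller inequality.} By the result of Solynin and Zalgaller \cite{SoZa}, the regular $n$-gon $P_n^*$ of unit area minimizes ${\rm LogCap}$ among all $n$-gons of unit area (hence in particular among convex ones), so $\gamma(n) = {\rm LogCap}(P_n^*)$ and the discrete Faber--Krahn inequality holds. Part (i) of (H3) is then immediate for $n=6$, since $\gamma(6) = {\rm LogCap}(P_6^*) = {\rm LogCap}(H)$. By Remark \ref{r:FK2}, part (ii) of (H3) reduces to checking (H3)$'$: that $n \mapsto {\rm LogCap}(P_n^*)^{-2}$ admits an increasing and concave extension to $[3,+\infty)$. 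Using the explicit Solynin--Zalgaller formula
$$
{\rm LogCap}(P_n^*) = \frac{\sqrt{n\tan(\pi/n)}\,\Gamma(1+\tfrac1n)}{\sqrt\pi\,2^{2/n}\,\Gamma(\tfrac12+\tfrac1n)},
$$
I would set $t = 1/n \in (0,1/3]$ and study the function $g(t) := {\rm LogCap}(P_{1/t}^*)^{-2}$, showing $g' > 0$ and $g'' \le 0$ on $(0,1/3]$; equivalently one shows that $n \mapsto \ln {\rm LogCap}(P_n^*)$ behaves suitably, but since we need concavity of the power $-2$ (not of a logarithm) it is cleanest to differentiate $g$ twice directly, using $\psi = \Gamma'/\Gamma$ and the expansions of $\psi$, $\tan$, and $\ln$. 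This is the analytic heart of the verification and is carried out in Lemma \ref{giamma2} in the Appendix; here I would just cite it.

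\textbf{Step 3: Conclude.} With (H1), (H2), (H3) verified and $\alpha = 1$, Theorem \ref{t:honeycomb2} applies verbatim (note $\Omega$ is assumed convex, as required there) and yields
$$
\lim_{k \to +\infty} \frac{k^{1/2}}{|\Omega|^{1/2}}\, M_k(\Omega) = {\rm LogCap}(H),
$$
which, after dividing through, is precisely the claimed identity
$$
\lim_{k \to +\infty} \frac{|\Omega|^{1/2}}{k^{1/2}}\, M_k(\Omega) = {\rm LogCap}(H).
$$
Wait — I should be careful about the direction: Theorem \ref{t:honeycomb2} states $\lim k^{\alpha/2}|\Omega|^{-\alpha/2} m_k(\Omega) = F(H)$, whereas the corollary writes $\lim |\Omega|^{\alpha/2} k^{-\alpha/2} M_k(\Omega) = {\rm LogCap}(H)$ with the reciprocal normalization; these are consistent only if the limit equals its own reciprocal times $F(H)^2$, so in the write-up I would match the normalization stated in Theorem \ref{t:honeycomb2} exactly and not worry about the apparently inverted constant in the corollary's display, treating it as the intended conclusion in the form proven by the theorem.

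\textbf{Main obstacle.} The only nontrivial point is Step 2, the concavity–monotonicity of $n \mapsto {\rm LogCap}(P_n^*)^{-2}$ on $[3,+\infty)$: the Gamma-function factors make the second-derivative sign computation somewhat delicate, and one must control the digamma terms $\psi(1+\tfrac1n) - \psi(\tfrac12+\tfrac1n)$ together with the $\tan(\pi/n)$ and $2^{2/n}$ contributions. I expect this to require either a careful monotonicity argument on each factor after taking logarithms and then recombining, or a direct estimate using standard bounds on $\psi$ and $\psi'$; everything else (Steps 1 and 3) is essentially formal once Theorem \ref{t:honeycomb2} and the Solynin--Zalgaller result are in hand.
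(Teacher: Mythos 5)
Your proposal is correct and follows essentially the same route as the paper: check (H1)--(H2) with $\alpha=1$, use the Solynin--Zalgaller inequality so that $\gamma(n)={\rm LogCap}(P_n^*)$, reduce (H3) to (H3)$'$ via Remark \ref{r:FK2}, delegate the monotonicity--concavity of $n\mapsto {\rm LogCap}(P_n^*)^{-2}$ to Lemma \ref{giamma2}, and apply Theorem \ref{t:honeycomb2}. Your hesitation about the normalization is well taken: since ${\rm LogCap}$ is increasing ($M_k(\Omega)\to 0$), the correct prefactor is $k^{1/2}/|\Omega|^{1/2}$ as in Theorem \ref{t:honeycomb2}, and the inverted fraction in the corollary's display is a typo, so resolving it the way you did (not by ``dividing through'') is the right call.
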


\section{Proof of Theorems  \ref{t:honeycomb0} and \ref{t:honeycomb} }\label{proof1}

The proofs of Theorems  \ref{t:honeycomb0} and \ref{t:honeycomb} are obtained by combining the next two lemmas.

\begin{definition} Let $(H_i ) _{i \in I}$ denote a tiling of $\R^2$ made by copies $H _i$ of the unit area regular hexagon $H$.
By {\it $k$-hexagonal structure}, we mean a connected set obtained as the union of $k$ hexagons lying in the family $(H _i)$.
\end{definition}

\begin{lemma}\label{t:helffer}   Let
$\Omega _k$ denote a generic $k$-hexagonal structure.
\begin{itemize}
\item[(i)]
If $F$ satisfies the assumptions of Theorem \ref{t:honeycomb0}, and
\begin{equation}\label{f:cells0}m _{k} (\Omega_k) = k F  (H) \qquad \forall k \in \N\,, \end{equation}
then  the conclusion \eqref{f:conj0} of Theorem \ref{t:honeycomb0} holds true.
\item[(ii)]
If $F$ satisfies the assumptions of Theorem \ref{t:honeycomb}, and
\begin{equation}\label{f:cells}M_{k} (\Omega_k) = F  (H) \qquad \forall k \in \N\,, \end{equation}
then  the conclusion \eqref{f:conj} of Theorem \ref{t:honeycomb} holds true.
\end{itemize}
\end{lemma}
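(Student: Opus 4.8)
The plan is to use the hexagonal structures $\Omega_k$ as a bridge between the general domain $\Omega$ and the honeycomb tiling, exploiting the scaling hypothesis (H2). I would prove the two statements in parallel since the mechanism is identical; only the bookkeeping of exponents differs. Throughout, write $\omega_k=F(H)$ for the relevant normalization, and recall that hypothesis (H2) forces the exponent $\gamma$ in \eqref{f:rough} to equal $(\alpha+2)/2$ for the additive problem and $\alpha/2$ for the supremal one.

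\emph{Upper bound.} First I would show $\limsup$ of the rescaled quantity is $\le F(H)$. Fix a small $\e>0$ and tile $\R^2$ by a rescaled copy of the unit hexagonal tiling with hexagons of area $\rho^2$; for $\rho$ small, the union of all tiles entirely contained in $\Omega$ is a $k$-hexagonal structure (up to a dilation) that exhausts $|\Omega|$ as $\rho\to 0$, i.e. it consists of $k=k(\rho)\sim |\Omega|/\rho^2$ hexagons. Restricting an optimal cluster/partition on this structure — which by the hypotheses \eqref{f:cells0} resp. \eqref{f:cells} costs exactly $k\,F(H\text{ of area }\rho^2)=k\rho^{-\alpha}F(H)$ resp. $\rho^{-\alpha}F(H)$ — and using it as a competitor for $\Omega$, monotonicity (H1) and the relation $k\rho^2\to|\Omega|$ give the desired bound after multiplying by the normalizing power of $k$ and letting first $\rho\to 0$, then cleaning up the $\e$.

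\emph{Lower bound.} This is the substantive half and I expect it to be the main obstacle, because a priori optimal (or near-optimal) clusters in $\Omega$ are arbitrary convex polygons, not pieces of a hexagonal tiling. The idea is: given a near-optimal convex $k$-cluster $\{E_i\}$ of $\Omega$, let $n_i$ be the number of sides of $E_i$ (convex bodies can be approximated from inside by polygons with a controlled number of sides, or one argues that for the energy it is enough to treat polygonal cells; in any case $F(E_i)|E_i|^{\alpha/2}\ge\gamma(n_i)$ by definition of $\gamma$). The crucial combinatorial input is the Euler-type estimate that, for a convex $k$-cluster contained in $\Omega$, the average number of sides satisfies $\frac1k\sum_i n_i\le 6+o(1)$ as $k\to\infty$ — this is where the planar topology enters (each interior edge is shared, boundary contributes lower order). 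Then one combines: for the additive problem, Hölder/power-mean inequality to pass from $\sum F(E_i)=\sum \gamma(n_i)|E_i|^{-\alpha/2}$ and the area constraint $\sum|E_i|\le|\Omega|$ to a bound involving $\frac1k\sum\gamma(n_i)^{2/(\alpha+2)}$, which by (H3)(ii) is $\ge\gamma(6)^{2/(\alpha+2)}=F(H)^{2/(\alpha+2)}$; for the supremal problem one similarly reduces to $\frac1k\sum\gamma(n_i)^{2/\alpha}\ge\gamma(6)^{2/\alpha}$ using that $\max_i F(E_i)\ge$ a suitable average. Rearranging yields the matching lower bound on the rescaled functional. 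The two places demanding care are: (a) justifying the polygonalization of convex cells without losing in the energy (a continuity/approximation argument combined with (H1) and (H2)); and (b) establishing the asymptotic side-count bound $\frac1k\sum n_i\le 6+o(1)$ robustly enough to absorb the boundary cells of $\Omega$ — this is the genuinely geometric step, and I would handle it by a discharging/Euler-formula argument on the planar graph formed by the cells together with $\partial\Omega$, noting that the number of boundary-adjacent cells is $O(\sqrt k)$ and hence negligible after division by $k$.

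Putting the two bounds together gives \eqref{f:conj0} resp. \eqref{f:conj}, completing the proof modulo the hypotheses \eqref{f:cells0}--\eqref{f:cells}, which isolate the only place where the specific hexagonal geometry of the tiling is used.
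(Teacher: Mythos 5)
Your lower bound misses the actual mechanism of this lemma and replaces it with an argument that does not go through under the stated hypotheses. The role of \eqref{f:cells0}--\eqref{f:cells} is precisely to supply the lower bound: one dilates $\Omega$ by the largest factor $\rho$ for which the hexagons of the unit tiling meeting $\rho\Omega$ number at most $k$; then $\rho\Omega$ is contained in a $k$-hexagonal structure $\Omega_k$, so by the inclusion $\C_k(\rho\Omega)\subseteq\C_k(\Omega_k)$ one gets $m_k(\rho\Omega)\geq m_k(\Omega_k)=kF(H)$ (resp.\ $M_k(\rho\Omega)\geq F(H)$), and homogeneity (H2) together with $\rho\sim\sqrt{k/|\Omega|}$ gives the matching lower bound. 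Your proposal instead uses the hypotheses only for the (trivial) upper bound -- where they are not needed, since the $k$ rescaled hexagons themselves are admissible competitors -- and for the lower bound you attempt to redo, directly in $\Omega$, the Euler-formula/H\"older/(H3) argument. But that is the content of Lemma \ref{t:clusters} (via Lemma \ref{FT}), which the paper deliberately carries out only inside $k$-triangles, exactly because a curved Lipschitz boundary breaks the argument.

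Concretely, two steps of your sketch fail. First, the side-count: for an arbitrary competitor cluster in $\Omega$ the number of cells touching $\partial\Omega$ need not be $O(\sqrt k)$ (nothing prevents a constant fraction of the cells from hugging the boundary), and for cells touching a curved boundary the maximal-area convex extension is not a polygon, so neither $n_i$ nor the bound $\tfrac1k\sum_i n_i\leq 6+o(1)$ is available; the corrections of Lemma \ref{FT}(iii) and Lemma \ref{l:Euler} rely on the container having a polygonal boundary with controlled structure. Second, even granting $\tfrac1k\sum_i n_i\leq 6+o(1)$, hypothesis (H3)(ii) of Theorems \ref{t:honeycomb0} and \ref{t:honeycomb} is only assumed when the average is $\leq 6$ exactly; exploiting an average $6+o(1)$ would require a continuity/extension property of $\gamma$ near $6$, which is an extra assumption (it is precisely what (H3)(ii) of Theorem \ref{t:honeycomb2} adds) and is not at your disposal here. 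Both difficulties evaporate once you use the monotonicity-plus-dilation sandwich with an outer hexagonal structure, which is the one idea your proof is missing.
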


 \begin{lemma}\label{t:clusters}
 Let
$\Omega _k$ denote a generic $k$-hexagonal structure.
\begin{itemize}
\item[(i)] If $F$ satisfies the assumptions of Theorem \ref{t:honeycomb0}, then \eqref{f:cells0} holds true.
\smallskip
\item[(ii)] If $F$ satisfies the assumptions of Theorem \ref{t:honeycomb}, then \eqref{f:cells} holds true.
\end{itemize}
\end{lemma}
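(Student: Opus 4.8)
The plan is to prove the two statements by establishing matching upper and lower bounds for $m_k(\Omega_k)$ (resp. $M_k(\Omega_k)$). The upper bound is immediate in both cases: the $k$ hexagons $H_i$ composing $\Omega_k$ form an admissible convex $k$-cluster, each with $F(H_i)=F(H)$ by homogeneity (H2) (they are unit-area copies of $H$), so $m_k(\Omega_k)\le kF(H)$ and $M_k(\Omega_k)\le F(H)$. The content of the lemma is therefore the reverse inequality, and I would treat both parts in parallel, the supremal case being a variant of the additive one.

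For the lower bound, fix a competitor $\{E_i\}\in\C_k(\Omega_k)$. The key geometric input is that $\Omega_k$ is itself a union of convex polygons (hexagons) whose edges lie along finitely many lines, so I would first argue that one may reduce to the case where each $E_i$ is a convex \emph{polygon}: replacing $E_i$ by a suitable inscribed polygon only decreases $F$ by monotonicity (H1) and, by a standard approximation, does not lose anything in the limiting inequality. More importantly, I want to associate with the cluster a combinatorial count of sides. Since the $E_i$ are disjoint convex polygons sitting inside the polygon $\Omega_k$, I would invoke an Euler-type (Fejes T\'oth style) inequality bounding the average number of sides: enlarging the $E_i$ to an actual partition of $\Omega_k$ into convex polygons (filling the gaps by convex cells) and using that a planar subdivision of a region bounded by $N_{\mathrm{ext}}$ exterior edges into $f$ faces has average number of sides at most $6 - \tfrac{12 - 2\cdot(\text{something})}{f}$; quantitatively, $\tfrac1k\sum_i n_i \le 6 + o(1)$ as $k\to\infty$ (the deficit from $6$ is of order $1/\sqrt k$ because the boundary of $\Omega_k$ has $O(\sqrt k)$ edges). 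Then, writing $n_i$ for the number of sides of $E_i$ and $a_i=|E_i|$, the definition of $\gamma(n)$ gives $F(E_i)\ge \gamma(n_i)\,a_i^{-\alpha/2}$, and $\sum_i a_i\le|\Omega_k|=k$.

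With these ingredients the additive case becomes an optimization: minimize $\sum_i \gamma(n_i)a_i^{-\alpha/2}$ subject to $\sum a_i\le k$ and $\tfrac1k\sum n_i\le 6+o(1)$. Optimizing in the $a_i$ for fixed $n_i$ (a convexity/Lagrange computation, using $\sum a_i = k$) produces a quantity proportional to $\big(\tfrac1k\sum_i \gamma(n_i)^{2/(\alpha+2)}\big)^{(\alpha+2)/2}\cdot k$, and then hypothesis (H3)(ii) — applied with the side-counts $n_i$, and using continuity of $\gamma$ near $6$ to absorb the $o(1)$ — bounds this below by $\gamma(6)^{ }\cdot k = F(H)\,k$. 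For the supremal case one instead has $\max_i \gamma(n_i)a_i^{-\alpha/2}\ge F(H)$: if all $F(E_i)<F(H)$ then $a_i > (\gamma(n_i)/\gamma(6))^{2/\alpha}|H|$, summing and using $\sum a_i\le k$ forces $\tfrac1k\sum\gamma(n_i)^{2/\alpha} < \gamma(6)^{2/\alpha}$, contradicting (H3)(ii) (again up to the $o(1)$ boundary correction, handled by continuity). This is why Theorem \ref{t:honeycomb} uses the exponent $2/\alpha$ while Theorem \ref{t:honeycomb0} uses $2/(\alpha+2)$.

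The main obstacle I expect is making the combinatorial side-count rigorous for an arbitrary convex cluster inside the non-convex region $\Omega_k$: one must extend $\{E_i\}$ to a genuine convex polygonal partition of $\Omega_k$ without creating cells with too many sides or destroying disjointness, control the number of extra cells and the boundary contribution so that the average stays $\le 6+o(1)$, and ensure the cells that were added do not spoil the estimate (they can, since $\gamma$ is only controlled through (H3)(ii) which involves \emph{all} the cells). A clean way around this is to discard the added filler cells from the energy sum but keep their side-count contribution, noting the inequality $\tfrac1k\sum n_i\le 6+o(1)$ only improves, and that dropping nonnegative terms $F$ only helps the lower bound — but one must still check the filler cells can be chosen convex, which is the delicate point and presumably where the hexagonal structure of $\Omega_k$ (all reflex features coming from unit hexagons) is used.
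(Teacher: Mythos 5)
There is a genuine gap at the combinatorial heart of your argument. Your plan yields only the weakened side-count $\frac1k\sum_i n_i\le 6+O(1/\sqrt k)$ (the boundary of a general $k$-hexagonal structure has $O(\sqrt k)$ edges), and you propose to absorb the error ``by continuity of $\gamma$ near $6$''. But in Theorems \ref{t:honeycomb0} and \ref{t:honeycomb} the hypothesis (H3)(ii) is stated only for averages that are exactly $\le 6$, $\gamma$ is defined only on integers, and no extension or continuity of $\gamma$ at $6$ is assumed (that extra hypothesis appears only in Theorem \ref{t:honeycomb2}, precisely because there the clean bound is unavailable). So under the stated assumptions your $o(1)$ cannot be absorbed, and the lower bound does not close. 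The paper avoids this in two steps you are missing: first it reduces to the case where $\Omega_k$ is a $k$-\emph{triangle}, by embedding any $k$-hexagonal structure in a larger $k'$-triangle and padding the cluster with $(k'-k)$ unit copies of $H$ (this is harmless: in the additive case both sides of the inequality shift by exactly $(k'-k)F(H)$, and in the supremal case the padded cells have energy exactly $F(H)$); second, for the $k$-triangle it proves the \emph{exact} bound $\frac1k\sum_i n_i\le 6$ (Lemma \ref{FT}), where the boundary excess is shown to be at most $6$ (via convexity of the chains along each of the three macroscopic sides $B_1,B_2,B_3$ and a translation argument ruling out too many cells touching two sides) and is exactly absorbed by the $-6$ in Euler's formula $6k-6\ge 2E$.

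Two further points. Your reduction to polygons goes in the wrong direction: since $F$ is decreasing under inclusion, an \emph{inscribed} polygon has larger $F$, which is useless for a lower bound on $\sum_i F(E_i)$; you must \emph{enlarge} each $E_i$. The paper does this by maximizing the total area $\sum_i|C_i|$ over convex clusters with $C_i\supseteq E_i$: maximality simultaneously forces the optimal cells to be polygons, guarantees that every side meets another cell or $\partial\Omega_k$ (which is what makes the Euler count work), and gives $F(E_i)\ge F(P_i)$ by (H1). This also disposes of the ``filler cells'' difficulty you flag at the end: no convex partition of the leftover region is ever constructed; the complement is treated as a single exterior face $P_0$ in the planar graph, so the delicate step you correctly identify as problematic simply does not arise. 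With these repairs, your Hölder computation in the additive case and your contradiction argument in the supremal case do match the paper's final steps.
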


%

\medskip {\it Proof of Lemma \ref{t:helffer}}.
The argument of the proof is inspired from and quite close to the one from \cite[Section 4]{BNHV}. However for convenience of the reader we report the detailed proof below.

Let $(H_i ) _{i \in I}$ denote a tiling of $\R^2$ made by copies $H _i$ of the unit area regular hexagon $H$.

For any positive factor of dilation $\rho$,  we set $\rho \Omega := \{ \rho x \ :\ x \in \Omega \}$, and we introduce the following families of indices:

$$
\begin{array}{ll}
& I ^ {\inte} (\rho, \Omega)  := \Big \{ i \in I \ :\ H _ i \subset (\rho \Omega) \Big \} \,,
\\  \noalign{\bigskip}
& I ^ {\ext} (\rho, \Omega) := I ^ {\inte} (\rho,\Omega) \cup \Big \{ i \in I  \ :\ H _ i \cap \partial ( \rho \Omega) \neq \emptyset \Big \} \,.
\end{array}
$$
Then, for every $k \in \N$, we set
$$
\begin{array}{ll}
& \rho ^ {\inte} (k, \Omega)  := \inf \Big \{ \rho >0 \ : \ \sharp I ^ {\inte} (\rho, \Omega) \geq k \Big \} \,,
\\  \noalign{\bigskip}
& \rho ^ {\ext} (k, \Omega)  := \sup \Big \{ \rho >0 \ :\ \sharp I ^ {\ext} (\rho, \Omega) \leq k \Big \} \,.
\end{array}
$$
Since
$$\sharp I ^ {\inte} (\rho, \Omega) \sim \sharp  I ^ {\ext} (\rho, \Omega) \sim \frac{|\Omega|}{|H|} \rho ^ 2 =|\Omega|\rho ^ 2 \qquad \hbox{ as } \rho \to + \infty\, ,$$
it holds
\begin{equation}\label{f:ae} \lim _{ k \to + \infty} \frac{  \rho ^ {\inte} (k, \Omega)  }{\sqrt k} = \lim _{ k \to + \infty} \frac{  \rho ^ {\ext} (k, \Omega)  }{\sqrt k}  = \frac{\sqrt {|H|} }{\sqrt{ |\Omega| }  } = \frac{1}{\sqrt{ |\Omega| } } \,.
\end{equation}

We observe that, since $F$ satisfies assumption (H2),  the map $\Omega \mapsto m _k (\Omega)$ is
homogeneous of degree $- \alpha$ under dilations. Moreover, it is monotone decreasing under inclusions, as
$$\Omega \subseteq \Omega' \quad \Rightarrow \quad \C _k (\Omega) \subseteq \C _k (\Omega ' )\,.$$

Now we proceed  to prove separately  statements (i) and (ii), though the two cases are quite similar to each other.

\medskip
{\it Proof of statement (i)}.  We deduce the statement by combining an upper bound and a lower bound for $m _k (\Omega)$.

\medskip
{\it -- Upper bound for $m _k (\Omega)$}.
We take $\rho = \rho ^ {\inte} (k, \Omega)$.
By definition of $m_k(\cdot)$ it holds $m _k (\rho \Omega) \leq k  F (H)$.
Then, by using  the homogeneity of $m _k (\cdot)$, we get
$$m_k (\Omega) = \rho^ \alpha m _k ( \rho \Omega) \leq \rho^ \alpha k F (H)\,.$$
We infer that
\begin{equation}\label{f:ub0} \limsup _{k \to + \infty} \frac {m_k (\Omega)} {k ^ {(\alpha+2)/2}} \leq \limsup _{k \to + \infty}  \frac{\big( \rho^ {\inte}(k, \Omega)  \big ) ^ \alpha }  {k ^ {(\alpha+2)/2}}  k F (H) = \frac{1}{|\Omega| ^ {{\alpha}/{2}} }  F (H)  \,,
\end{equation}
where the last equality follows from \eqref{f:ae}.

\medskip
{\it -- Lower bound for $m _k (\Omega)$}.
We take $\rho = \rho ^ {\ext} (k, \Omega)$.  By using the homogeneity and  decreasing monotonicity of $m _k (\cdot)$, and the hypothesis \eqref{f:cells0}, we get
$$m _k ( \Omega) = \rho ^ \alpha m _k (\rho \Omega) \geq \rho ^ \alpha  m _{k }  (\Omega _{k } )
= \rho ^ \alpha  k F (H)
 \,.$$
We infer that
\begin{equation}\label{f:lbp0} \liminf _{k \to + \infty} \frac {m _k (\Omega)} {k ^ {(\alpha +2)/2}}
\geq \liminf _{k \to + \infty}  \frac{\big ( \rho^ {\ext}(k, \Omega)  \big ) ^ \alpha }  {k ^ {(\alpha+2)/2}} k F (H)=
\frac{1}{|\Omega| ^ {{\alpha}/{2}} }  F (H)  \,,
\end{equation}
where again the last equality follows from \eqref{f:ae}.

\smallskip
The proof is achieved by combining \eqref{f:ub0} and \eqref{f:lbp0}.

\medskip

{\it Proof of statement (ii)}.  Similarly as above, we deduce the statement by combining an upper bound and a lower bound for $M _k (\Omega)$.

\medskip
{\it -- Upper bound for $M _k (\Omega)$}.
We take $\rho = \rho ^ {\inte} (k, \Omega)$.
By definition of $M_k(\cdot)$ it holds $M _k (\rho \Omega) \leq F (H)$.
Then, by using  the homogeneity of $M _k (\cdot)$, we get
$$M_k (\Omega) = \rho^ \alpha M _k ( \rho \Omega) \leq \rho^ \alpha F (H)\,.$$
We infer that
\begin{equation}\label{f:ub} \limsup _{k \to + \infty} \frac {M_k (\Omega)} {k ^ {\alpha/2}} \leq \limsup _{k \to + \infty}  \frac{\big( \rho^ {\inte}(k, \Omega)  \big ) ^ \alpha }  {k ^ {\alpha/2}} F (H) = \frac{1}{|\Omega| ^ {{\alpha}/{2}} }  F (H)  \,,
\end{equation}
where the last equality follows from \eqref{f:ae}.

\medskip
{\it -- Lower bound for $M _k (\Omega)$}.
We take $\rho = \rho ^ {\ext} (k, \Omega)$.  By using the homogeneity and  decreasing monotonicity of $M _k (\cdot)$, and the hypothesis \eqref{f:cells}, we get
$$M _k ( \Omega) = \rho ^ \alpha M _k (\rho \Omega) \geq \rho ^ \alpha  M _{k }  (\Omega _{k } )
= \rho ^ \alpha F (H)
 \,.$$
We infer that
\begin{equation}\label{f:lbp} \liminf _{k \to + \infty} \frac {M _k (\Omega)} {k ^ {\alpha/2}}
\geq \liminf _{k \to + \infty}  \frac{\big ( \rho^ {\ext}(k, \Omega)  \big ) ^ \alpha }  {k ^ {\alpha/2}} F (H)=
\frac{1}{|\Omega| ^ {{\alpha}/{2}} }  F (H)  \,,
\end{equation}
where again the last equality follows from \eqref{f:ae}.


\smallskip
The proof is achieved by combining \eqref{f:ub} and \eqref{f:lbp}. \qed

\bigskip
In order to prove Lemma \ref{t:clusters}, we need some preliminaries.

\begin{definition} We say that a  $k$-hexagonal structure is a {\it $k$-triangle} if it has the shape of an equilateral  triangle
(see Figure \ref{fig-tiling} (a) below). In particular,  the number of cells $k$ is of the form $\displaystyle k=l(l+1)/2$, where $l$ is the number of cells on one of the sides of the triangle. The boundary of the $k$-triangle is composed of three sets $B_1$, $B_2$ and $B_3$, each of the sets being composed of these sides whose exterior normal has positive scalar product with the vector $\nu_1$, $\nu_2$ and $\nu_3$, respectively.

\end{definition}


\begin{figure}[ht]
\begin{center}
\hspace*{-0.5cm}
\subfigure[A 15-triangle.]{
\begin{tikzpicture}[scale=0.60, transform shape]
\node[regular polygon, regular polygon sides=6, minimum width=2cm,draw] (reg1) at (0,0){};
  \node[regular polygon, regular polygon sides=6, minimum width=2cm,draw] (reg1) at (2*sin{60}*sin{60},2*sin{60}*cos{60}){};
  \node[regular polygon, regular polygon sides=6, minimum width=2cm,draw] (reg1) at (2*sin{60}*sin{60},-2*sin{60}*cos{60}){};

  \node[regular polygon, regular polygon sides=6, minimum width=2cm,draw] (reg1) at (3,0){};
  \node[regular polygon, regular polygon sides=6, minimum width=2cm,draw] (reg1) at (4*sin{60}*sin{60},4*sin{60}*cos{60}){};
  \node[regular polygon, regular polygon sides=6, minimum width=2cm,draw] (reg1) at (4*sin{60}*sin{60},-4*sin{60}*cos{60}){};

  \node[regular polygon, regular polygon sides=6, minimum width=2cm,draw] (reg1) at (3+2*sin{60}*sin{60},2*sin{60}*cos{60}){};
  \node[regular polygon, regular polygon sides=6, minimum width=2cm,draw] (reg1) at (3+2*sin{60}*sin{60},-2*sin{60}*cos{60}){};

  \node[regular polygon, regular polygon sides=6, minimum width=2cm,draw] (reg1) at (6*sin{60}*sin{60},6*sin{60}*cos{60}){};
  \node[regular polygon, regular polygon sides=6, minimum width=2cm,draw] (reg1) at (6*sin{60}*sin{60},-6*sin{60}*cos{60}){};

  \node[regular polygon, regular polygon sides=6, minimum width=2cm,draw] (reg1) at (8*sin{60}*sin{60},8*sin{60}*cos{60}){};
  \node[regular polygon, regular polygon sides=6, minimum width=2cm,draw] (reg1) at (8*sin{60}*sin{60},-8*sin{60}*cos{60}){};

  \node[regular polygon, regular polygon sides=6, minimum width=2cm,draw] (reg1) at (3+4*sin{60}*sin{60},4*sin{60}*cos{60}){};
  \node[regular polygon, regular polygon sides=6, minimum width=2cm,draw] (reg1) at (3+4*sin{60}*sin{60},-4*sin{60}*cos{60}){};

  \node[regular polygon, regular polygon sides=6, minimum width=2cm,draw] (reg1) at (6,0){};

\draw[very thick,blue] (-1+0,0) -- (-1+cos{60},sin{60}) -- (-1+cos{60}+1,sin{60}) -- (-1+2*cos{60}+1,2*sin{60}) -- (-1+2*cos{60}+2,2*sin{60}) -- (-1+3*cos{60}+2,3*sin{60}) -- (-1+3*cos{60}+3,3*sin{60}) -- (-1+4*cos{60}+3,4*sin{60}) -- (-1+4*cos{60}+4,4*sin{60}) -- (-1+5*cos{60}+4,5*sin{60}) -- (-1+5*cos{60}+5,5*sin{60});
\draw[very thick,red] (-1+0,0) -- (-1+cos{60},-sin{60}) -- (-1+cos{60}+1,-sin{60}) -- (-1+2*cos{60}+1,-2*sin{60}) -- (-1+2*cos{60}+2,-2*sin{60}) -- (-1+3*cos{60}+2,-3*sin{60}) -- (-1+3*cos{60}+3,-3*sin{60}) -- (-1+4*cos{60}+3,-4*sin{60}) -- (-1+4*cos{60}+4,-4*sin{60}) -- (-1+5*cos{60}+4,-5*sin{60}) -- (-1+5*cos{60}+5,-5*sin{60});
\draw[very thick, green] (-1+5*cos{60}+5,5*sin{60}) -- (-1+6*cos{60}+5,4*sin{60}) -- (-1+5*cos{60}+5,3*sin{60}) -- (-1+6*cos{60}+5,2*sin{60}) -- (-1+5*cos{60}+5,1*sin{60}) -- (-1+6*cos{60}+5,0*sin{60}) -- (-1+5*cos{60}+5,-1*sin{60}) -- (-1+6*cos{60}+5,-2*sin{60}) -- (-1+5*cos{60}+5,-3*sin{60}) -- (-1+6*cos{60}+5,-4*sin{60}) -- (-1+5*cos{60}+5,-5*sin{60});
\draw[very thick, green, -latex] (7.5,0) -- (8.5,0) node[midway,above] {$\nu_3$};
\draw[very thick, blue, -latex] (-1+3*cos{60}+2+0.5*cos{120},3*sin{60}+0.5*sin{120}) -- (-1+3*cos{60}+2+1.5*cos{120},3*sin{60}+1.5*sin{120}) node[midway,above,sloped] {$\nu_1$};
\draw[very thick, red, -latex] (-1+3*cos{60}+0.5*cos{240}+2,-3*sin{60}+0.5*sin{240}) -- (-1+3*cos{60}+2+1.5*cos{240},-3*sin{60}+1.5*sin{240}) node[midway,above,sloped] {$\nu_2$};
\end{tikzpicture}}\hspace{0.5cm}
\subfigure[An example of convex polygon $P_i$ touching only one side of the $k$-triangle.]{
\begin{tikzpicture}[scale=0.60, transform shape]
  \node[regular polygon, regular polygon sides=6, minimum width=2cm,draw] (reg1) at (0,0){};
  \node[regular polygon, regular polygon sides=6, minimum width=2cm,draw] (reg1) at (2*sin{60}*sin{60},2*sin{60}*cos{60}){};
  \node[regular polygon, regular polygon sides=6, minimum width=2cm,draw] (reg1) at (2*sin{60}*sin{60},-2*sin{60}*cos{60}){};

  \node[regular polygon, regular polygon sides=6, minimum width=2cm,draw] (reg1) at (3,0){};
  \node[regular polygon, regular polygon sides=6, minimum width=2cm,draw] (reg1) at (4*sin{60}*sin{60},4*sin{60}*cos{60}){};

    \node[regular polygon, regular polygon sides=6, minimum width=2cm,draw] (reg1) at (3+2*sin{60}*sin{60},2*sin{60}*cos{60}){};

    \node[regular polygon, regular polygon sides=6, minimum width=2cm,draw] (reg1) at (0,-2*sin{60}){};

   \draw[very thick, pattern=north west lines, opacity=0.6] (sin{30}+sin{30}/0.7-1/0.7,0) -- (1,1.7*cos{30}) -- (1.7*cos{30}+sin{30}+0.7*1.7*cos{30}-0.7*1+0.7*sin{30},2*cos{30}+0.7*2*cos{30}-0.7*1.7*cos{30}) -- (3.3,-0.3) -- (1,-1) -- (sin{30}+sin{30}/0.7-1/0.7,0);
\node at (1.8,0.7) {$P_i$};

\draw[very thick,blue] (-1+0,0) -- (-1+cos{60},sin{60}) -- (-1+cos{60}+1,sin{60}) -- (-1+2*cos{60}+1,2*sin{60}) -- (-1+2*cos{60}+2,2*sin{60}) -- (-1+3*cos{60}+2,3*sin{60}) -- (-1+3*cos{60}+3,3*sin{60}) -- (-1+4*cos{60}+3,4*sin{60}) -- (-1+4*cos{60}+4,4*sin{60});
\end{tikzpicture}}
\hspace{0.5cm}
\subfigure[An example of convex polygon $P_i$ touching two sides of the $k$-triangle.]{
\begin{tikzpicture}[scale=0.60, transform shape]

  \node[regular polygon, regular polygon sides=6, minimum width=2cm,draw] (reg1) at (0,0){};
  \node[regular polygon, regular polygon sides=6, minimum width=2cm,draw] (reg1) at (2*sin{60}*sin{60},2*sin{60}*cos{60}){};
  \node[regular polygon, regular polygon sides=6, minimum width=2cm,draw] (reg1) at (2*sin{60}*sin{60},-2*sin{60}*cos{60}){};

  \node[regular polygon, regular polygon sides=6, minimum width=2cm,draw] (reg1) at (3,0){};
  \node[regular polygon, regular polygon sides=6, minimum width=2cm,draw] (reg1) at (4*sin{60}*sin{60},4*sin{60}*cos{60}){};
  \node[regular polygon, regular polygon sides=6, minimum width=2cm,draw] (reg1) at (4*sin{60}*sin{60},-4*sin{60}*cos{60}){};

  \node[regular polygon, regular polygon sides=6, minimum width=2cm,draw] (reg1) at (3+2*sin{60}*sin{60},2*sin{60}*cos{60}){};
  \node[regular polygon, regular polygon sides=6, minimum width=2cm,draw] (reg1) at (3+2*sin{60}*sin{60},-2*sin{60}*cos{60}){};

  \node[regular polygon, regular polygon sides=6, minimum width=2cm,draw] (reg1) at (6*sin{60}*sin{60},6*sin{60}*cos{60}){};
  \node[regular polygon, regular polygon sides=6, minimum width=2cm,draw] (reg1) at (6*sin{60}*sin{60},-6*sin{60}*cos{60}){};

 \draw[very thick, pattern=north west lines, opacity=0.6] (sin{30}+sin{30}/0.7-1/0.7,0) -- (1,1.7*cos{30}) -- (1.7*cos{30}+sin{30}+0.7*1.7*cos{30}-0.7*1+0.7*sin{30},2*cos{30}+0.7*2*cos{30}-0.7*1.7*cos{30}) -- (1.7*cos{30}+sin{30}+1.3*1.7*cos{30}-1.3*1+1.3*sin{30},-2*cos{30}-1.3*2*cos{30}+1.3*1.7*cos{30}) -- (1,-1.7*cos{30}) -- (sin{30}+sin{30}/0.7-1/0.7,0);
\node at (2,-0.5) {$P_i$};

\draw[very thick,blue] (-1+0,0) -- (-1+cos{60},sin{60}) -- (-1+cos{60}+1,sin{60}) -- (-1+2*cos{60}+1,2*sin{60}) -- (-1+2*cos{60}+2,2*sin{60}) -- (-1+3*cos{60}+2,3*sin{60}) -- (-1+3*cos{60}+3,3*sin{60}) -- (-1+4*cos{60}+3,4*sin{60}) -- (-1+4*cos{60}+4,4*sin{60});
\draw[very thick,red] (-1+0,0) -- (-1+cos{60},-sin{60}) -- (-1+cos{60}+1,-sin{60}) -- (-1+2*cos{60}+1,-2*sin{60}) -- (-1+2*cos{60}+2,-2*sin{60}) -- (-1+3*cos{60}+2,-3*sin{60}) -- (-1+3*cos{60}+3,-3*sin{60}) -- (-1+4*cos{60}+3,-4*sin{60}) -- (-1+4*cos{60}+4,-4*sin{60});
\end{tikzpicture}}
\end{center}
\label{fig-tiling}
\end{figure}

\begin{lemma}\label{FT}
Let $\Omega _k$ be a $k$-triangle, and let $\{ E_i \}$ be a convex $k$-cluster of $\Omega_k$.
Consider the optimization problem
$$\max  \Big \{\sum _{i = 1} ^ k |C_i| \ :\ \{C _i\} \text{ convex $k$-cluster of $\Omega_k$, $C _i \supseteq E_i \ \forall i = 1, \dots, k$} \Big \}\,. $$
Then:
\begin{itemize}
\item[(i)] A solution $\{C _i^ {opt} \}$ exists, and each $C_i ^ {opt}$ is a convex polygon $P _i$ with a finite number of sides.
\medskip
\item[(ii)] Every side of $P_i$ intersects in its relative interior $\cup _{j \neq i } \partial P _j \cup \partial \Omega _k$.
\medskip
\item[(iii)] If $n _i$ denotes the number of sides of $P _i$, it holds
$$\frac{1}{k} \sum _{i = 1} ^ k n _i \leq 6\,.$$
\medskip
\end{itemize}
\end{lemma}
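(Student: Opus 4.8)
\emph{Plan.} I would obtain (i) by a direct (compactness) argument, then prove that the maximiser saturates $\Omega_k$; from that saturation property, (ii) is immediate and (iii) follows from Euler's formula, the one genuinely new ingredient being the combinatorics of the boundary of a $k$-triangle.

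\emph{Existence, and reduction to the saturation property.} For (i), every competitor $\{C_i\}$ satisfies $E_i\subseteq C_i\subseteq\Omega_k$, so the $C_i$'s stay in a Hausdorff-compact class of convex bodies (bounded, and non-degenerate since they contain the fixed $E_i$'s). The constraints ``$C_i$ convex'', ``$E_i\subseteq C_i\subseteq\Omega_k$'' and ``$|C_i\cap C_j|=0$'' all pass to Hausdorff limits --- for the last one, recall that two convex bodies with disjoint interiors are separated by a line, and pass to the limit of the separating lines --- and $\{C_i\}\mapsto\sum_i|C_i|$ is continuous there; so the Blaschke selection theorem yields a maximiser $\{C_i^{opt}\}$. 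I will deduce that each $C_i^{opt}$ is a convex polygon with finitely many sides \emph{once I know that $\{C_i^{opt}\}$ leaves no gap}: indeed, wherever two of the convex cells $C_i^{opt},C_j^{opt}$ are adjacent their common boundary is convex seen both from $C_i^{opt}$ and from $C_j^{opt}$, hence is a segment; moreover $C_i^{opt}\cap C_j^{opt}$ is a convex set of measure zero, hence a point or a segment, and $\partial C_i^{opt}\cap\partial\Omega_k$ is a finite union of segments. Since (no gaps) each point of $\partial C_i^{opt}$ is of one of these two types, $\partial C_i^{opt}$ is a finite union of segments, so $C_i^{opt}$ is a convex polygon with $n_i\le(k-1)+\#\{\text{sides of }\Omega_k\}$; I rename it $P_i$.

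\emph{Saturation (the main obstacle).} The heart of the matter is that $\sum_i|C_i^{opt}|=|\Omega_k|$, i.e.\ $\Omega_k\setminus\bigcup_iC_i^{opt}$ is Lebesgue--null. Suppose there were a component $G$ of positive measure. Its boundary lies in $\bigcup_i\partial C_i^{opt}\cup\partial\Omega_k$ and cannot be contained in $\partial\Omega_k$ (else $G$ would exhaust $\mathrm{int}\,\Omega_k$), so some cell $C_i^{opt}$ has a boundary point $x\notin\partial\Omega_k$ with $G$ lying on the exterior side of $C_i^{opt}$ near $x$. I would then replace $C_i^{opt}$ by its convex hull with one or two auxiliary points placed at distance $\varepsilon$ from $x$ on the side of $G$: for $\varepsilon$ small the resulting ``cap'' is contained in $\overline G$, so the enlarged cell still lies in $\Omega_k$ and still has disjoint interior from the other cells, while its area strictly increased --- contradicting maximality. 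The delicate case, which I expect to be \textbf{the main obstacle}, is when $x$ lies in the relative interior of a flat side of $C_i^{opt}$: then a one-point cap is a triangle spanning that whole side and might overlap an adjacent cell, so one must use two points and produce a thin quadrilateral cap that pinches inside $G$ at the endpoints of the side, which requires controlling how fast $G$ narrows there. Once saturation is established, $\{P_i\}$ is (up to a null set) a convex partition of $\Omega_k$, and (ii) is immediate: if $e$ is a side of $P_i$ with $e\not\subseteq\partial\Omega_k$, no point of $\mathrm{relint}(e)$ can lie in the interior of another cell (disjoint interiors) nor in a gap (there are none), hence $\mathrm{relint}(e)\subseteq\bigcup_{j\ne i}\partial P_j\cup\partial\Omega_k$; sides on $\partial\Omega_k$ meet $\partial\Omega_k$ trivially.

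\emph{The count (iii).} Consider the planar subdivision whose vertices are the corners of the $P_i$'s and of $\Omega_k$, whose edges are the maximal segments shared by two of these regions (or by a region and the exterior of $\Omega_k$), and whose faces are the $k$ cells together with the outer face, so $F=k+1$ (using saturation). Writing $V=I+B$ with $B$ the number of boundary vertices, equal to the number of boundary edges, Euler's formula gives $E=I+B+k-1$. Every interior vertex has degree $\ge3$ (two convex cell-corners cannot fill a full turn), and so does every boundary vertex other than the convex corners of $\partial\Omega_k$: at a reflex corner of $\partial\Omega_k$ a single convex cell cannot fill the angle, so at least two cells meet and an interior edge emanates, while at a non-corner boundary vertex an interior edge also emanates (otherwise a gap would open). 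Writing $C_{\mathrm{cvx}},C_{\mathrm{rfx}}$ for the numbers of convex and reflex corners of $\partial\Omega_k$, this gives $2E\ge 3I+3(B-C_{\mathrm{cvx}})+2C_{\mathrm{cvx}}$, hence $I+B\le 2k-2+C_{\mathrm{cvx}}$ by Euler. Since each polygon-side of $P_i$ is a union of subdivision edges while the outer face is incident to the $B$ boundary edges, $\sum_{i=1}^k n_i\le 2E-B=2(I+B)-B+2k-2\le 6k-6+2C_{\mathrm{cvx}}-B$. Finally, all boundary edges of the $k$-triangle are orthogonal to one of the three vectors $\nu_1,\nu_2,\nu_3$, pairwise at $120^\circ$, so each corner of $\partial\Omega_k$ is a $\pm60^\circ$ turn, and since the total exterior turning equals $360^\circ$ we get $C_{\mathrm{cvx}}-C_{\mathrm{rfx}}=6$; therefore $B\ge C_{\mathrm{cvx}}+C_{\mathrm{rfx}}=2C_{\mathrm{cvx}}-6$, so $2C_{\mathrm{cvx}}-B\le 6$ and $\sum_{i=1}^kn_i\le 6k$, i.e.\ $\frac1k\sum_{i=1}^kn_i\le6$. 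It is precisely this last identity --- which encodes that $\Omega_k$ has the combinatorial shape of a \emph{triangle}, total exterior turning $360^\circ$ split into sixty-degree steps --- that produces the sharp constant $6$, and this is why one works with $k$-triangles rather than with general domains; the reflex-corner degree bound and the turning count are the secondary points that need care.
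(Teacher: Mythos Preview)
Your plan hinges on the saturation claim $\sum_i |C_i^{\rm opt}| = |\Omega_k|$, which you correctly flag as the main obstacle---but it is in fact \emph{false}, so the obstacle is fatal rather than merely delicate. First, the proposed ``thin quadrilateral cap that pinches at the endpoints'' is geometrically impossible: if $C$ is convex with a flat side $S=[a,b]$ and $C'\supsetneq C$ is convex with $C'\setminus C$ lying on the far side of $S$, then for any $p\in C'\setminus C$ convexity forces $\triangle(a,b,p)\subseteq C'$, so the added region necessarily spans the whole base $S$; there is no way to bulge a convex body \emph{locally} across a flat side. Second, saturation genuinely fails. Take four thin rectangles arranged as a pinwheel around the origin, e.g.\ $E_1=[-1,L]\times[1,1+\delta]$ and $E_2,E_3,E_4$ its successive clockwise $90^\circ$ rotates, with $L$ large. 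If a convex $C_1\supseteq E_1$ contained the origin it would contain ${\rm conv}(E_1\cup\{0\})$, which overlaps ${\rm int}(E_2)\subseteq{\rm int}(C_2)$ in positive measure; by the fourfold symmetry no $C_i$ can contain $0$, so the origin is never covered. Such a pinwheel (using four of the $k$ cells, the rest placed elsewhere) embeds in any sufficiently large $k$-triangle, hence the maximiser may well satisfy $\sum_i|P_i|<|\Omega_k|$. Your Euler count for (iii), which needs $F=k+1$, and your derivations of (i)--(ii) from saturation, then collapse.

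The paper avoids saturation entirely. For (i) it shows only that each \emph{free} boundary arc of $C_i^{\rm opt}$ (one touching neither another $P_j$ nor $\partial\Omega_k$) must be a segment---otherwise one could enlarge along a non-straight free arc; gaps are permitted. For (ii) it argues directly: if an entire side $S$ of $P_i$ misses $\bigcup_{j\ne i}\partial P_j\cup\partial\Omega_k$ in its relative interior, then a sufficiently thin triangle with base $S$ fits in the complement and strictly enlarges $P_i$. For (iii), instead of your primal subdivision, the paper builds a \emph{dual} planar graph with vertices $P_0,P_1,\dots,P_k$ (where $P_0$ is the unbounded complement) and edges recording adjacencies; Euler with $V=k+1$ and $3F\le 2E$ gives $E\le 3k-3$, after which the total side count is bounded via $2E_{\rm in}\ge N_{\rm in}$ together with a geometric estimate $N_{\rm out}\le 2E_{\rm out}+6$ that exploits the three-sided shape of the $k$-triangle. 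None of this requires the $P_i$ to cover $\Omega_k$.
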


\proof
(i) The existence is straightforward since,  if we endow the class of convex $k$-clusters with the Hausdorff topology, we are maximizing a continuous functional on a compact set under the closed constraint $C_i \supseteq E _i$.

Since every $C _i^ {opt}$ is a convex set, to prove that it is a polygon it is enough to  show that the portions of $\partial C _i^ {opt}$ which are free, meaning that they do not lie neither on $\partial C _j^ {opt}$ for any $j \neq i$, nor on $\partial \Omega _k$, are line segments
(possibly degenerated into a point).
Let $\Gamma$ be any such portion, and assume by contradiction it is not a line segment. Then there are two distinct points $p, q$ on $\Gamma$ such that the tangent lines to $\Gamma$ through $p$ and $q$ are not parallel;  consequently, it is possible to construct a convex set, contained into $\Omega _k \setminus \bigcup _{j \neq i}  C _j^ {opt}$,  containing $E _i$, and having a strictly larger area than  $C _i ^{opt}$,  contradicting the maximality of $\sum _{i=1} ^k |C _i^ {opt}|$.

Notice finally that the number of sides of each  $P _i$  is necessarily finite in view of the convexity of the polygons, which ensure that the contact set $\partial P_i\cap\partial P_j$ of the polygons $P_i$ and $P _j$ is connected (and hence that the number of contacts is finite).

\smallskip
(ii) Assume by contradiction that there is a side $S$ of $P_i$ which does not intersect in its relative interior $\cup _{j \neq i } \partial P _j \cup \partial \Omega _k$. Then by adding to $P _i$ a small triangle with basis $S$, we would find a  convex set, contained into
$\Omega _k \setminus \bigcup _{j \neq i}  P _j$, containing $E_i$, and having a strictly larger area than $P _i$.
This contradicts the maximality of $\sum _{i=1} ^k |P _i|$.

\smallskip
(iii) Denote by $P _0$ the unbounded connected component of $\R ^2 \setminus \bigcup _{i= 1} ^ k P _i$.  In order to compute  the sum $\sum _{i = 1} ^ k n _i $, we construct a suitable planar graph associated with the family of polygons $\{P_0, \dots, P_k\}$.

For every $i = 0, 1, \dots, k$, we associate with the polygon $P _i$ a vertex.

Before we construct the edges of the graph, we make some preliminary observations:
\begin{itemize}
\item{} For $j$ and $i$ in $\{1, \dots, k\}$, the intersection $\partial P _i \cap \partial P _j$ may be either empty, or a segment  with strictly positive $\mathcal H ^ 1$ measure, or a point;
\item{} For $j=0$ and $i$ in $\{1, \dots, k\}$, the intersection $\partial P _0 \cap \partial P _i$ may be either empty, or a set  with strictly positive $\mathcal H ^ 1$ measure, or a finite number of points. Notice in particular that $\partial P _0 \cap \partial P _i$  may be disconnected.
\end{itemize}

 Then we give the following rule:
 \begin{itemize}
 \item{} for $j$ and $i$ in $\{1, \dots, k\}$, we connect the vertices corresponding to
 $P _j$ and $P _i$ with one edge if and only if $\partial P _i \cap \partial P _j$ is either a line segment with a strictly positive $\mathcal H ^ 1$ measure, or a point lying in the interior of a side of $P _i$ or $P _j$;
 \item{} for $j=0$ and $i$ in $\{1, \dots, k\}$, we connect the vertices corresponding to $P _0$ and $P _i$ if and only if $\partial P _i \cap \partial P_0$ contains a side of $P_i$ (that is a side that has an interior contact point with $\partial \Omega_k$, but not with any of the convex polygons $P_j$, for $j\ge1$ and $j\neq i$).  In this case, we connect the vertices corresponding to $P_0$ and $P_i$ exactly with one edge for each connected component of $\partial P _i \cap \partial P_0$ containing a side of $P_i$.
 \end{itemize}

Note that it is possible to perform the above construction in such a way that different edges do not intersect outside the vertices, so that the graph thus constructed is planar (in order to construct a representation of the graph in the plane it is sufficient to associate to each polygon $P_i$ a point $X_i$ in its interior and then connect the points $X_i$ and $X_j$ by a curve passing through $\partial P_i\cap\partial P_j$).

We notice that this planar graph may admit multiple edges connecting $P_0$ to one of the convex polygons $P_i$. On the other hand, by construction, we have that each face of the graph has at least three edges. In fact if two edges $e_1$ and $e_2$, both connecting $P_i$ to $P_0$, determine one face of the graph, then the intersections of these edges with $\partial P_i$ are on the same connected component of $\partial P_i\cap\partial P_0$, which is impossible by construction since to each connected component of $\partial P_i\cap\partial P_0$ is associated at most one edge.

The Euler formula on $\R ^2$ gives
$$V-E + F = 2\, ,$$
being $V$, $E$, and $F$ respectively the number of vertices, edges, and faces of the graph.
On the numbers $V, E, F$ we know the following facts: the number $V$ of vertices equals $k +1$  (because we have added the exterior polygon $P_0$ to the initial family of $k$ polygons);
moreover, the number $F$ of faces can be bounded in terms of the number $E$ of edges through the elementary inequality $3F \leq 2E$
(because, since $k \geq 2$,  every face has at least $3$ edges and every edge is on $2$ faces).
Thus, from  the Euler formula we infer
\begin{equation}\label{f11}
3k - 3 \geq E\,.
\end{equation}
The family $\mathcal E$  of all edges of the graph is the union of two disjoint subfamilies:
the subfamily  $\mathcal E _{\rm in}$ of the edges
which connect a pair of polygons $P _i$ and $P _j$ in $\{ P _1, \dots, P _k \}$, and
the subfamily   $\mathcal E _{\rm out}$
of the edges which connects $P _0$ with a polygon in $\{ P _1, \dots, P _k \}$.
Thus,  denoting respectively by  $E _{\rm in}$ and $E _{\rm out}$  the cardinalities of
such subfamilies, we have
\begin{equation}\label{f21}
E =  E _{\rm in} +  E _{\rm out}\,.
\end{equation}
We are going to estimate separately $E _{\rm in}$ and $E _{\rm out}$ in terms of the numbers
$$
\begin{array}{ll}
 & N _{\rm in}
 \text{:= total number of sides of polygons $P_i$ ($i = 1, \dots, k$),
 which intersect }
 \\ \noalign{\smallskip}
 & \hskip 1 cm \text{ the boundary of another polygon $P_j$ ($j= 1, \dots, k$) in their relative interior}
\\ \noalign{\medskip}
& N _{\rm out}
\text{:= the remaining sides of $P _1, \dots, P _k$.}
\end{array}
$$
We first consider the edges in the subfamily $\mathcal E _{\rm in}$. Let $e_{ij}$ be any such edge, connecting the vertices which correspond to $P _i$ and $P _j$.
If the intersection $\partial P _i \cap \partial P _j$ occurs in the relative interior of a side of $P _i$ (resp., such a side of $P_j$),
we associate with $e _{ij}$
such a side of $P _i$ (resp.\ a side of $P_j$). In this way,
the number of sides which are associated with an edge is at most $2$.
Notice also that the same side can be associated with more than one edge, if it contains more than one intersection between different polygons.

Therefore, we have:
\begin{equation}\label{f31}
2 E _{\rm in} \geq N _{\rm in}\,.
\end{equation}
From \eqref{f11}, \eqref{f21},  and \eqref{f31}, we deduce that
$$6k - 6  \geq 2 E  = 2   \big (  E _{\rm in} +  E _{\rm out} \big ) \geq  N _{\rm in} + 2 E _{\rm out}  \,.  $$
Therefore, to achieve the proof of statement (iii), it is enough to show that
\begin{equation}\label{f41} 2 E _{\rm out}  + 6 \geq N _{\rm out}\,.
\end{equation}
We notice that,  according to Lemma \ref{FT} (ii), every side of $P _1, \dots, P _k$ which touches {\it only} $\partial \Omega _k$ (in its relative interior) is associated to some edge in $\mathcal E _{\rm out}$. Let's count the total number $N _{\rm out}$ of such sides.

Let $e_{i}$ be an edge in $\mathcal E _{\rm out}$, connecting the vertices which correspond to $P _i$ and $P _0$, through a certain connected component of $\partial P _i  \cap \partial P _0$ containing a side of $P_i$. Let $S_1,\dots,S_m$ be the (consecutive) sides of $P_i$ corresponding to the edge $e_i$.

We distinguish three cases:
\begin{itemize}
\item Suppose that the chain $S_1,\dots,S_m$ touches $\partial\Omega_k$ only in points of the side $B_1$ of the $k$-triangle $\Omega_k$ (see Figure \ref{fig-tiling} (c)). Then, by the convexity of $P_i$, the number of sides associated to the edge $e_i$  in $\mathcal E _{\rm out}$ is at most $2$ that is, $m=1$ or $m=2$.
\item Suppose that the chain $S_1,\dots,S_m$ touches two of the sides of the $k$-triangle, say $B_1$ and $B_2$ (see Figure \ref{fig-tiling} (b)).  Then, there can be at most two sides of $P_i$ for each side of the $k$-triangle. Thus, $m\le 4$. Moreover, we can suppose that there is at most one convex polygon $P_i$ touching both $B_1$ and $B_2$ with a connected chain of the form $S_1,\dots,S_m$. In fact, let $U_{12}$ be the connected  of $\Omega_k\setminus \overline{P_i}$ whose boundary contains the vertex $B_1\cap B_2$. Then, there are no polygons in $U_{12}$. In fact, if this is not the case, then we could translate all these polygons in the direction $\nu_3$ until one of these polygons touches $P_i$. Now, in this new (still optimal) configuration, the chain $S_1,\dots,S_m$ is disconnected. As a conclusion, there are at most three polygons touching two sides of the $k$-triangle with a connected chain of sides; each of these chains contains at most $4$ segments.
\item The last case is that the chain $S_1,\dots,S_m$ touches all the three sides of the $k$-triangle. This case is outruled by performing the same translation as above.
\end{itemize}
In conclusion, we have that each edge $e_i$ connecting $P_i$ to $P_0$ contains at most two sides of $P_i$. As an exception, there might be at most three other polygons for which one of the edges has an extra contribution of $2$ more sides, so that the extra-contribution of all polygons is at most $3 \cdot 2 = 6$, which concludes the proof of \eqref{f41}.
\qed

\bigskip

\bigskip {\it Proof of Lemma \ref{t:clusters}}. Let us prove separately statements (i) and (ii).

\medskip
{\it Proof of statement (i)}.  Let us
show that,
if $\{ E _i \}$ is any convex $k$-cluster of $\Omega _k$, it holds
\begin{equation}\label{f:thesis0}\sum _{i = 1} ^ k F ( E_i) \geq k F (H)\,.
\end{equation}

\medskip
{\it Claim: To prove the above property we may assume without loss of generality that  $\Omega_k$ is a $k$-triangle.  }

Indeed, assume that we can prove the inequality \eqref{f:thesis0} for  convex $k$-clusters contained into  a $k$-triangle (for arbitrary large $k \in \N$). Let now $\Omega _k$ be
any given $k$-hexagonal structure, and let $\{ E _i\}$
be a convex $k$-cluster
of $\Omega _k$.

We observe that
$\Omega _k$ can be embedded into a larger $k'$-hexagonal structure $\Omega _{k'}$,  $\Omega _{k'} \supset \Omega _k$, such that $\Omega _{k'}$ is a $k'$-triangle.

We consider the convex $k'$-cluster $\{ E' _i\}$ of $\Omega _{k'}$ obtained by adding to the family $\{ E _i\}$ a number of $(k'-k)$ copies of $H$, namely
$E'_i$ equals $E _i$ for $1 \leq i \leq k$, and $E' _i$ equals a copy of $H$ for $k+1 \leq i \leq k'$.

By assumption, we have
$$\sum_{i=1} ^ {k'} F ( E'_i) \geq k' F (H)\,.$$
Since
$$\sum_{i=1} ^ {k'} F ( E'_i) = \sum_{i=1} ^ {k} F ( E_i) + (k' - k) F (H) \, , $$
we infer that
$$\sum_{i=1} ^ {k} F ( E_i) \geq k F (H)\,.$$

Let us now prove that inequality \eqref{f:thesis0} for any $k$-cluster $\{ E_i \}$ contained into a $k$-triangle. Let $\{ P _i \}$ be a family of polygons associated with $\{ E _i \}$ according to Lemma \ref{FT}.

Since $\sum _{i=1} ^ k  |P _i|  \leq k$, and $F (P _i) |P _i| ^ {\alpha/2} \geq \gamma (n _i)$ for every $i= 1, \dots, k$, we have
\begin{equation}\label{inizio} k ^ {\alpha/2} \sum _{i = 1} ^ k F ( P _i) \geq \Big ( \sum _{i = 1} ^ k |P _i|  \Big ) ^ {\alpha/2 } \Big ( \sum _{i=1} ^ k \frac{\gamma ( n _i)}{|P_i| ^ {\alpha/2}}\Big )\,.
\end{equation}
By applying the H\"older inequality $\sum _i | x_i y _i|  \leq (\sum _i |x_i | ^ p ) ^ {1/p}  (\sum _i |y_i | ^ {p'} ) ^ {1/p'}$ with
$$
p = \frac{\alpha +2}{2} \, , \qquad q = \frac{\alpha + 2}{\alpha}\, , \qquad x _i :=
\frac{\gamma (n _i) ^ {2/(\alpha+2) }} {|P_i| ^ {\alpha/ (\alpha + 2)}}\, , \qquad y _i = |P _i| ^ {\alpha/ (\alpha +2)}\,, $$
 we infer from \eqref{inizio} that
$$k ^ {\alpha/2} \sum _{i = 1} ^ k F ( P _i) \geq \Big (  \sum _{i=1} ^ k \gamma (n _i) ^ {2 / (\alpha +2)} \Big ) ^ {(\alpha + 2 )/2 }\,.$$
By virtue of Lemma \ref{FT} (iii) and assumption (H3) (ii), we deduce that
$$k ^ {\alpha/2} \sum _{i = 1} ^ k F ( P _i) \geq k  ^ {(\alpha + 2 )/2 } \gamma (6)\,,$$
and, in turn, that
$$\sum _{i=1} ^ k  F ( E _i) \geq \sum _{i=1} ^ k  F ( P _i) \geq k \gamma (6) \,.$$

\bigskip
{\it Proof of statement (ii)}.
In order to prove the equality $M _ k (\Omega _k) = F (H)$
we are going to show that,
if $\{ E _i \}$ is a convex $k$-cluster of $\Omega _k$ such that
\begin{equation}\label{f:ipothesis}
\max_{i= 1, \dots, k} F ( E_i) \leq F (H)\,
\end{equation}
it holds necessarily
\begin{equation}\label{f:thesis}
\max_{i= 1, \dots, k}  F ( E_i) = F (H)\,.
\end{equation}

\medskip
{\it Claim: To prove the above implication we may assume without loss of generality that  $\Omega_k$ is a $k$-triangle.  }

Indeed, assume that we can prove the implication \eqref{f:ipothesis} $\Rightarrow$ \eqref{f:thesis} for $k$-triangle. Let now $\Omega _k$ be
any given $k$-hexagonal structure, let $\{ E _i\}$
be a convex $k$-cluster
of $\Omega _k$ satisfying \eqref{f:ipothesis}, and let us show that it satisfies \eqref{f:thesis}.

We observe that, for some $k'\ge k$,
$\Omega _k$ can be embedded into a larger $k'$-triangle $\Omega _{k'}$, that is  $\Omega _{k'} \supset \Omega _k$.
We consider the convex $k'$-cluster $\{ E' _i\}$ of $\Omega _{k'}$ obtained by adding to the family $\{ E _i\}$ a number of $(k'-k)$ copies of $H$, namely
$E'_i$ equals $E _i$ for $1 \leq i \leq k$, and $E' _i$ equals a copy of $H$ for $k+1 \leq i \leq k'$.

By construction, the convex $k'$-cluster $\{ E'_i\}$ still satisfies the condition
$$\max_{i=1, \dots, k'} F ( E'_i) \leq F (H)\,.$$
Since we are assuming that the implication \eqref{f:ipothesis} $\Rightarrow$ \eqref{f:thesis} holds true for $\Omega_{k'}$, we have
$$\max_{i=1, \dots, k'} F ( E'_i) \leq F (H)\,,$$
which in turn implies  \eqref{f:thesis}.

Let us now prove the implication \eqref{f:ipothesis} $\Rightarrow$ \eqref{f:thesis} for $k$-triangles.
Let $\Omega _k$ be a $k$-triangle, and let $\{ E _i \}$ be a convex $k$-cluster of $\Omega _k$ satisfying \eqref{f:ipothesis}. Let $\{ P _i \}$ be a family of polygons associated with $\{ E _i \}$ according to Lemma \ref{FT}.

By using \eqref{f:ipothesis},  the monotonicity hypothesis (H1) made on $F$, and assumption (H3) (i),  we see that
\begin{equation}\label{f:upperP}
F ( P_i) \leq F ( E _i) \leq F (H) \qquad \forall i = 1, \dots, k\,;
\end{equation}
so we are done if we show that
\begin{equation}\label{f:conclusion}
F (P _i) \geq F (H)\qquad \forall i = 1, \dots, k\,.
\end{equation}

Let us denote by $n _i$ the number of sides of the convex polygon $P _i$.

By using \eqref{f:upperP}, assumption (H3) (i), and the definition of $\gamma (n _i)$, we have
$$\gamma (6) |P _ i | ^ {\alpha/2} \geq F ( P_i) |P _ i | ^ {\alpha/2} \geq \gamma (n _i)\qquad \forall i = 1, \dots, k\,.$$
We deduce that
\begin{equation}\label{e1}
F ( P _i) \geq \frac{\gamma (n _i) }{|P _i| ^ {\alpha/2}  }\qquad \forall i = 1, \dots, k\,
\end{equation}
and
\begin{equation}\label{e2}
| P _i| \geq \Big ( \frac{\gamma (n _i)}{\gamma (6)}  \Big ) ^ {2/\alpha}\qquad \forall i = 1, \dots, k\,.
\end{equation}
Using \eqref{e2}, summed over $k$, Lemma \ref{FT} (iii), and assumption (H3) (ii), we get
\begin{equation}\label{eq:catena}
k \geq \sum _{i = 1} ^ k |P _i| \geq \sum _{i=1} ^ k \Big ( \frac{\gamma (n _i)}{\gamma (6)}  \Big ) ^ {2/\alpha} \geq k \,.
\end{equation}
We deduce that all the inequalities \eqref{e2} hold as equalities, and then from \eqref{e1} we obtain
\begin{equation}\label{f:conc}
F ( P _i) \geq \gamma (6)\, \qquad \forall i = 1, \dots, k \,. ;
\end{equation}
finally, in view of assumption (H3) (i), we obtain the required inequalities  \eqref{f:conclusion}.
\begin{remark}\label{rem:quantitativeFK}
Note that, in case $F$ satisfies a quantitative Faber-Krahn inequality of the kind
\[
F ( P /|P|) \geq \gamma (n) + \delta_n(P/|P|, P^*_n) \quad\text{for every $n$-gon }P,
\]
where the non-negative function $\delta_n$ vanishes if and only if $P$ is regular, then one can modify
the above proofs in order to show that not only $m _ k (\Omega _k) = k F (H)$ (resp., $M _k (\Omega _k) = F (H)$), but  also that actually the unique optimal
partition consists of regular hexagons.
\end{remark}

\section{Proof of Theorem \ref{t:honeycomb2}}\label{proof2}

\begin{remark}\label{r:proof}  { Before entering into the proof  let us stress that,  in the setting of Theorem \ref{t:honeycomb2}, the assertion
$m _k (\Omega _k ) = F (H)$ for every $k$-hexagonal structure $\Omega _k$ seems to be too strong. In particular, if one tries to prove it
by adapting the arguments used in the proof of Lemma \ref{t:clusters}, one gets troubles in order to pass from the case of a triangular hexagonal structure to the case of a generic hexagonal structure. Indeed, embedding a given $k$-hexagonal structure $\Omega _k$ into a $k'$-triangle  (with $k' >k$), it is not true that for every convex $k$-partition $\{ P _i\}$ of $\Omega_k$ it is possible to find a convex $k'$-partition $\{ P' _i\}$ of $\Omega _{k'}$ such that $P '_i \cap \Omega_{k'}$ equals $P _i \cap \Omega_{k}$ for $1 \leq i\leq  k$ and $P' _i \cap \Omega_{k'}$ equals a copy of $H$ for $k+1 \leq i \leq k'$.  }
\end{remark}

The proof of Theorem \ref{t:honeycomb2} needs some preliminary lemmas.

\begin{lemma}\label{l:Euler} Let $Q$ be a convex polygon with $n _Q$ sides; let $\{ P _i \}_{\{i = 1, \dots, k \}}$ be a convex $k$-partition of $Q$, and let $n _i$ denote the number  of sides of $P _i \cap Q$. Then
$$\frac{1}{k} \sum _{i = 1} ^ k n _i \leq 6 + \frac {n _Q - 6 }{k}\,.$$
\end{lemma}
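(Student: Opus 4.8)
The plan is to reproduce the Euler‑formula argument of Lemma~\ref{FT}(iii), this time keeping careful track of the contribution of $\partial Q$, whose number of sides is the parameter $n_Q$ rather than the fixed number $3$ of a $k$‑triangle. Write $R_i:=P_i\cap Q$; each $R_i$ is a convex polygon with $n_i\ge 3$ sides, the $R_i$ have mutually disjoint interiors, and their union is $Q$. The case $k=1$ is trivial (then $R_1=Q$, $n_1=n_Q$), so I assume $k\ge 2$. As in Lemma~\ref{FT}(iii) I build a planar graph $G$ with one vertex for each cell $R_i$ ($i=1,\dots,k$) and one vertex for the unbounded component $R_0:=\R^2\setminus\overline Q$: I join $R_i$ to $R_j$ ($1\le i,j\le k$) by one edge whenever $\partial R_i\cap\partial R_j$ is a segment of positive length or a point in the relative interior of a side of $R_i$ or of $R_j$, and I join $R_i$ to $R_0$ by one edge for each connected component of $\partial R_i\cap\partial Q$ that contains a side of $R_i$. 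Two simplifications hold with respect to Lemma~\ref{FT}: since $R_i\cap R_j=P_i\cap P_j\cap Q$ is convex, $\partial R_i\cap\partial R_j$ is always connected, so at most one edge joins two cell‑vertices; and, since the $R_i$ tile $Q$, any side of a cell whose relative interior meets no $\partial R_j$ ($j\neq i$, $1\le j\le k$) must lie on $\partial Q$.

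Next I run the Euler bound. The graph $G$ is planar and connected, has $V=k+1$ vertices, and every face has at least three edges: the only possible multiple edges join some $R_i$ to $R_0$, and two of them cannot bound a bigon by the same reasoning as in Lemma~\ref{FT}(iii), since between two distinct components of $\partial R_i\cap\partial Q$ the boundary $\partial R_i$ carries a vertex of $G$. Hence $3F\le 2E$, and Euler's formula $V-E+F=2$ gives $E\le 3k-3$. I split $E=E_{\rm in}+E_{\rm out}$, where $E_{\rm out}$ is the number of edges incident to $R_0$.

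Then I count sides. Splitting the sides of the cells into those whose relative interior meets some $\partial R_j$ ($j\neq i$, $1\le j\le k$) and the remaining ones, which lie on $\partial Q$, I write $\sum_i n_i=N_{\rm in}+N_{\rm out}$. Exactly as in Lemma~\ref{FT}(iii), associating with each ``interior'' edge the (at most two) sides through which the contact occurs gives $N_{\rm in}\le 2E_{\rm in}$. For $N_{\rm out}$, let $C$ be a connected component of some $\partial R_i\cap\partial Q$ that contains a side of $R_i$: it is a polygonal arc of $\partial Q$ with, say, $t_C$ corners of $Q$ in its relative interior, hence meeting exactly $t_C+1$ sides of $Q$. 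Since $R_i$ is convex and contained in the half‑plane bounded by the line of any side of $Q$, at most one side of $R_i$ lies on that line; thus $C$ contains at most $t_C+1$ sides of $R_i$. As every exterior side of $R_i$ lies in exactly one such $C$, and the number of such pairs $(R_i,C)$ is $E_{\rm out}$, we get $N_{\rm out}\le\sum_C(t_C+1)=E_{\rm out}+\sum_C t_C$. Finally, each corner $v$ of $Q$ lies in the relative interior of at most one such arc: if $C\subseteq\partial R_i\cap\partial Q$ contains $v$ in its relative interior, then $R_i$ meets $\partial Q$ on both sides of $v$, so by convexity $R_i$ fills the whole angular sector of $Q$ at $v$ and no other cell touches $v$. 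Summing over the $n_Q$ corners, $\sum_C t_C\le n_Q$, whence $N_{\rm out}\le E_{\rm out}+n_Q$.

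Putting the estimates together, $\sum_i n_i=N_{\rm in}+N_{\rm out}\le 2E_{\rm in}+E_{\rm out}+n_Q\le 2E+n_Q\le 2(3k-3)+n_Q=6k-6+n_Q$, which is the desired inequality after dividing by $k$. I expect the only delicate point to be the bookkeeping behind $N_{\rm out}\le E_{\rm out}+n_Q$: one must verify simultaneously that each exterior contact arc contributes no more sides than the number of sides of $Q$ it meets, and that each corner of $Q$ is ``charged'' at most once; these two facts together replace the constant $+6$ of Lemma~\ref{FT}(iii) by the sharper term $n_Q-6$.
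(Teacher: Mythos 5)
Your proof is correct and follows essentially the same route as the paper: the same planar graph with an outer vertex for $\R^2\setminus Q$, Euler's formula combined with $3F\le 2E$ to get $E\le 3k-3$, and the split $\sum_i n_i=N_{\rm in}+N_{\rm out}$ with $N_{\rm in}\le 2E_{\rm in}$. The only (harmless) difference is the bookkeeping for $N_{\rm out}$: you charge the corners of $Q$ interior to each boundary contact arc and get $N_{\rm out}\le E_{\rm out}+n_Q$, whereas the paper charges the two endpoints of each arc and gets $N_{\rm out}\le 2E_{\rm out}+n_Q$; both estimates feed into $2E+n_Q\le 6k-6+n_Q$.
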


\proof Throughout the proof, we denote for brevity by $P _i$ the polygon $P _i \cap Q$, for $i = 1, \dots, k$.
Moreover,  we denote by $P _0$ the complement of $Q$ in $\R ^2$.

Let us assume without loss of generality that $k \geq 2$ (otherwise the statement is immediately satisfied).

In order to compute  the sum $\sum _{i = 1} ^ k n _i $, we construct a suitable graph associated with the family of polygons $\{P_0, \dots, P_k\}$.

More precisely, for every $i = 0, 1, \dots, k$, we associate with the polygon $P _i$ a vertex.

We denote by $\mathcal I$ the family of indices $i \in \{1, \dots , k \}$ such that $\partial P _i \cap \partial Q$ has a strictly positive $\mathcal H ^ 1$ measure.  For $i \in \mathcal I$,
some polygon $P _i$ may disconnect $Q$ (meaning that the complement in $Q$ of the interior of $P _i$  may be disconnected). Then,
for $i \in \mathcal I$, we denote by $m (i)$ the number of connected components of $\partial P _i \cap \partial Q$ having strictly positive $\mathcal H ^1$-measure.

 Then we give the following rule:
 \begin{itemize}
 \item{} for $j$ and $i$ in $\{1, \dots, k\}$, we connect the vertices corresponding to
 $P _j$ and $P _i$ if and only if their common boundary $\partial P _i \cap \partial P _j$ has a strictly positive $\mathcal H ^ 1$ measure;
 \item{} for $j=0$ and $i$ in $\{1, \dots, k\}$, we connect the vertices corresponding to $P _0$ and $P _i$ if and only if $\partial P _i \cap \partial Q$ has a strictly positive $\mathcal H ^ 1$ measure, and in this case we connect such vertices exactly with $m (i)$ edges (one edge for each connected component of $\partial P _i \cap \partial Q$ having strictly positive $\mathcal H ^1$ measure).
 \end{itemize}

For the planar graph thus constructed, the Euler formula on $\R ^2$ gives
$$V-E + F = 2\, ,$$ being $V$, $E$, and $F$ respectively the number of vertices, edges, and faces of the graph.

On the numbers $V, E, F$ we know the following facts: the number $V$ of vertices equals $k +1$  (because we have added the exterior polygon $P_0$ to the initial family of $k$ polygons);
moreover, reasoning as in the proof of Lemma \ref{FT}, the number $F$ of faces can be bounded in terms of the number $E$ of edges through the elementary inequality $3F \leq 2E$
(since we can construct a planar graph with the same number of faces and edges, which does not contain multiple edges).
Thus, from  the Euler formula we infer
\begin{equation}\label{f1}
3k - 3 \geq E\,.
\end{equation}
The family $\mathcal E$  of all edges of the graph is the union of two disjoint subfamilies:
the subfamily  $\mathcal E _{\rm in}$ of the edges
connecting a pair of polygons $P _i$ and $P _j$ in $\{ P _1, \dots, P _k \}$, and
the subfamily   $\mathcal E _{\rm out}$
of the edges connecting $P _0$ with a polygon in $\{ P _1, \dots, P _k \}$.
Thus,  denoting respectively by  $E _{\rm in}$ and $E _{\rm out}$  the cardinalities of
such subfamilies, we have
\begin{equation}\label{f2}
E =  E _{\rm in} +  E _{\rm out}\,.
\end{equation}
We are going to estimate separately $E _{\rm in}$ and $E _{\rm out}$ in terms of the numbers
$$
\begin{array}{ll}
 & N _{\rm in}
 \text{:= total number of sides of $P _1, \dots, P _k$ which do not lie on $\partial Q$ }
\\ \noalign{\medskip}
& N _{\rm out}
\text{:= total number of sides of $P _1, \dots, P _k$ which lie on $\partial Q$.}
\end{array}
$$
Since each edge of the graph  in the subfamily $\mathcal E _{\rm in}$ appears at least twice
when counting the number of sides of $P _1, \dots, P _k$ which do not lie on $\partial Q$, we have
\begin{equation}\label{f3}
2 E _{\rm in} \geq N _{\rm in}\,.
\end{equation}

On the other hand, by construction, we have
\begin{equation}\label{f4}
E _{\rm out} = \sum_ {i \in \mathcal I} m (i)\,;
\end{equation}
moreover, in the counting of $N _{\rm out}$, each connected component of $\partial P _i \cap \partial Q$ (for $i \in \mathcal I$) of strictly positive $\mathcal H ^ 1$ measure gives a contribution of at most $2$ extra sides other than the initial sides of $Q$, so that
\begin{equation}\label{f5}
N _{\rm out} - n _ Q \leq 2 \sum _{i \in \mathcal I} m (i).
\end{equation}
By \eqref{f1}, \eqref{f2}, \eqref{f3}, \eqref{f4}, and \eqref{f5}, we conclude that
$$6k - 6 + n _Q \geq 2 E+ n _Q  = 2   \big (  E _{\rm in} +  E _{\rm out} \big )+ n _Q  \geq  N _{\rm in} + 2 \sum _{i \in \mathcal I} m (i) + n _Q \geq N _{\rm in} +N _{\rm out} = \sum _{i = 1} ^ k n _i \,.  $$

\qed

\begin{lemma}\label{t:convexpol} {Under the assumptions of Theorem \ref{t:honeycomb2},  let $Q_k$ be a family of convex polygons with at most $C \sqrt k$ sides, where $C$ is a positive constant (independent of $k$). Then there exists $\overline k$ (depending on $C$ and on the constants $k _0$ and $\delta _0$ appearing in assumption (H2) (ii)) such that
$$   m _k (Q _k) \geq \frac{|Q_k|^ {\alpha /2} }{k ^ {\alpha /2}}  \gamma \Big ( 6 + \frac{ C}{\sqrt k} \Big )  \qquad \forall k \geq \overline k\,.$$
}
\end{lemma}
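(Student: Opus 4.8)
The plan is to prove Lemma \ref{t:convexpol} by adapting the polygon-replacement and H\"older-inequality argument from the proof of statement (ii) of Lemma \ref{t:clusters}, but now using the refined side-counting estimate of Lemma \ref{l:Euler} in place of Lemma \ref{FT} (iii). First I would fix a convex $k$-partition $\{P_i\}$ of $Q_k$ realizing (or nearly realizing) the infimum $m_k(Q_k)$; by compactness of convex $k$-partitions of a fixed convex polygon in the Hausdorff topology, together with the continuity of $F$ on $\mathcal K^2$, a minimizer exists, so we may take $\max_i F(P_i\cap Q_k) = m_k(Q_k)$. Write $n_i$ for the number of sides of $P_i\cap Q_k$ and, as usual, $\gamma(n_i)\le F(P_i\cap Q_k)\,|P_i\cap Q_k|^{-\alpha/2}$ by the definition of $\gamma$ in (H3) (note the sign of the exponent has flipped, since $F$ is now increasing and homogeneous of positive degree $\alpha$). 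Since $\sum_i |P_i\cap Q_k| = |Q_k|$ and each $F(P_i\cap Q_k)\le m_k(Q_k)$, rearranging gives $|P_i\cap Q_k|\ge \big(\gamma(n_i)/m_k(Q_k)\big)^{2/\alpha}$ for every $i$, hence
\begin{equation*}
|Q_k| \;=\; \sum_{i=1}^k |P_i\cap Q_k| \;\ge\; m_k(Q_k)^{-2/\alpha} \sum_{i=1}^k \gamma(n_i)^{2/\alpha}.
\end{equation*}

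Next I would control $\frac1k\sum_i n_i$ from above using Lemma \ref{l:Euler} with $Q = Q_k$: since $Q_k$ has at most $C\sqrt k$ sides, $n_{Q_k}-6 \le C\sqrt k$, so
\begin{equation*}
\frac1k \sum_{i=1}^k n_i \;\le\; 6 + \frac{n_{Q_k}-6}{k} \;\le\; 6 + \frac{C}{\sqrt k}.
\end{equation*}
Setting $\delta = \delta_k := C/\sqrt k$, this is exactly the hypothesis of the implication in (H3) (ii), \emph{provided} $k$ is large enough that $k\ge k_0$ and $0<\delta_k\le \delta_0$; both hold once $k\ge \overline k$ for a suitable $\overline k = \overline k(C,k_0,\delta_0)$ (one needs $\delta_k>0$, which is automatic, and $C/\sqrt k \le \delta_0$, i.e.\ $k\ge C^2/\delta_0^2$, together with $k\ge k_0$). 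For such $k$, assumption (H3) (ii) yields
\begin{equation*}
\frac1k \sum_{i=1}^k \gamma(n_i)^{-2/\alpha} \;\le\; \gamma\big(6+\delta_k\big)^{-2/\alpha}.
\end{equation*}
Here I pause on a subtlety: the chain of inequalities above contains $\sum_i \gamma(n_i)^{2/\alpha}$ (positive exponent), whereas (H3) (ii) controls $\sum_i \gamma(n_i)^{-2/\alpha}$ (negative exponent). To bridge this, I would invoke the power-mean / Jensen-type inequality relating $\big(\frac1k\sum \gamma(n_i)^{2/\alpha}\big)^{\alpha/2}$ and $\big(\frac1k\sum \gamma(n_i)^{-2/\alpha}\big)^{-\alpha/2}$ — more precisely, by the AM--HM inequality (or Cauchy--Schwarz) applied to the positive numbers $t_i := \gamma(n_i)^{2/\alpha}$ one has $\frac1k\sum t_i \ge \big(\frac1k\sum t_i^{-1}\big)^{-1}$, hence
\begin{equation*}
\frac1k \sum_{i=1}^k \gamma(n_i)^{2/\alpha} \;\ge\; \left( \frac1k \sum_{i=1}^k \gamma(n_i)^{-2/\alpha} \right)^{-1} \;\ge\; \gamma\big(6+\delta_k\big)^{2/\alpha}.
\end{equation*}

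Combining the three displays, for $k\ge\overline k$ we obtain $|Q_k| \ge m_k(Q_k)^{-2/\alpha}\, k\, \gamma(6+\delta_k)^{2/\alpha}$, and solving for $m_k(Q_k)$ gives exactly
\begin{equation*}
m_k(Q_k) \;\ge\; \frac{|Q_k|^{\alpha/2}}{k^{\alpha/2}}\, \gamma\Big(6+\frac{C}{\sqrt k}\Big),
\end{equation*}
as claimed. I expect the main obstacle to be the correct handling of the exponent mismatch just described — getting the direction of the power-mean inequality right and making sure it is being applied to the quantity that actually appears, rather than its reciprocal. A secondary point requiring care is the existence of the minimizer and the legitimacy of assuming $\max_i F(P_i\cap Q_k)$ equals the infimum: one must check that $P_i \cap Q_k$ is indeed a convex polygon with finitely many sides (true, as the intersection of two convex polygons), that $F$ is continuous with respect to Hausdorff convergence on convex bodies of bounded size (this should follow from (H1), monotonicity, by a standard squeezing argument, or be taken as part of the standing hypotheses on $F$), and that the threshold $\overline k$ depends only on $C$, $k_0$, $\delta_0$ and not on the particular family $Q_k$. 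Finally, I would double-check the degenerate possibility that some $n_i$ falls outside $[3,\infty)$ or that some $P_i\cap Q_k$ is lower-dimensional; since $\{P_i\}$ is a genuine $k$-partition of the $2$-dimensional set $Q_k$, each $P_i\cap Q_k$ has positive area and at least $3$ sides, so $\gamma(n_i)$ is well-defined and positive, and the divisions performed above are legitimate.
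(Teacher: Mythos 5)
Your overall skeleton matches the paper's: Lemma \ref{l:Euler} giving $\frac1k\sum_i n_i\le 6+C/\sqrt k$, the definition of $\gamma$, assumption (H3)(ii) with $\delta_k=C/\sqrt k$, and the threshold $\overline k=\max\{k_0,\,C^2/\delta_0^2\}$. However, the central chain of inequalities is derived in the wrong direction, so the proof as written does not establish the claim. From $\gamma(n_i)\le F(P_i\cap Q_k)\,|P_i\cap Q_k|^{-\alpha/2}$ and $F(P_i\cap Q_k)\le \max_j F(P_j\cap Q_k)=:\mu$ one obtains $\gamma(n_i)\,|P_i\cap Q_k|^{\alpha/2}\le\mu$, i.e.\ the \emph{upper} bound $|P_i\cap Q_k|\le \mu^{2/\alpha}\,\gamma(n_i)^{-2/\alpha}$, not the lower bound $|P_i\cap Q_k|\ge\big(\gamma(n_i)/\mu\big)^{2/\alpha}$ that you assert; consequently your display $|Q_k|\ge \mu^{-2/\alpha}\sum_i\gamma(n_i)^{2/\alpha}$ does not follow. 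Moreover, even granting that display, solving it for $\mu$ yields $\mu\ge (k/|Q_k|)^{\alpha/2}\gamma(6+\delta_k)$, with $k$ and $|Q_k|$ in the opposite places from the statement, so your final line is not a consequence of your own estimate. The AM--HM ``bridge'' is an artifact of the sign error: with the correct upper bound one simply sums over $i$, applies (H3)(ii) directly to get $\sum_i\gamma(n_i)^{-2/\alpha}\le k\,\gamma(6+\delta_k)^{-2/\alpha}$ for $k\ge\overline k$, and concludes $|Q_k|\le \mu^{2/\alpha}\,k\,\gamma(6+\delta_k)^{-2/\alpha}$, i.e.\ $\mu\ge (|Q_k|/k)^{\alpha/2}\gamma(6+\delta_k)$, which is exactly the statement. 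This is precisely the paper's computation (carried out after normalizing $|Q_k|=k$ by homogeneity of $m_k$).

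A secondary point: you do not need a minimizer, and invoking Hausdorff compactness plus ``continuity of $F$'' imports a property that is not among the standing hypotheses (and limits of partitions may have degenerate cells, where such continuity is delicate). The corrected argument above bounds $\max_i F(P_i\cap Q_k)$ from below for an \emph{arbitrary} convex $k$-partition of $Q_k$, and the lemma follows by taking the infimum; the paper packages this as the implication that any partition with $\max_i F(P_i\cap Q_k)\le\gamma(6+\delta_k)$ must in fact satisfy equality, thereby avoiding any existence discussion.
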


\proof
Let $\rho _k= \frac{k^ {1/2}}{|Q_k| ^ {1/2}}$, so that $|\rho_k Q_k | = k$.
By homogeneity of $m _k (\cdot)$, we have
$$m _k  (Q_k) = \rho_k ^ {- \alpha} m _k (\rho_k Q_k)  = \frac{k^ {- \alpha/2}}{|Q_k| ^ {- \alpha/2}}  m _k (\rho _kQ_k)\,.$$
Thus we are reduced to prove that  there exists $\overline k$ such that
$$m _k (\rho_k Q_k) \geq  \gamma \Big (  6 + \frac{ C}{\sqrt k} \Big )  \qquad \forall k \geq \overline k\,.$$

In the remaining of the proof we assume with no loss of generality that $\rho _k= 1$ and $|Q_k| = k$.

Let us show that there exists $\overline k \in \N$ such that,  if $k \geq \overline k$ and $\{ P _i \}$ is a convex $k$-partition of $Q_k$ satisfying
\begin{equation}\label{f:ipothesis2}
\max _{{i = 1, \dots, k }} F (P _i \cap Q_k) \leq \gamma \Big ( 6 +  \frac{ C}{\sqrt k}  \Big )\, ,
\end{equation}
it holds necessarily
\begin{equation}\label{f:thesis2}
\max _{{i = 1, \dots, k }} F (P _i \cap Q_k) = \gamma \Big ( 6 +  \frac{ C}{\sqrt k}  \Big )\,.
\end{equation}

Let us denote by $n _i ^k$ the number of sides of the polygon $P _i \cap  Q_k$.

We observe that, by Lemma \ref{l:Euler}, and since the number of sides $n _{Q_k}$ of $Q _k$ is at most $C \sqrt k$, it holds
\begin{equation}\label{f:meanb}
\frac{1}{k} \sum _{i = 1} ^ k  n _ i ^k \leq 6 + \frac{ n _{Q_k}-6}{k} \leq 6 + \frac{ C\sqrt k -6}{k} \leq 6 + \frac{C}{\sqrt k }\,.
\end{equation}

By using \eqref{f:ipothesis2} and the definition of $\gamma (n _i ^k)$, we have
$$ \gamma \Big ( 6 +  \frac{ C}{\sqrt k}  \Big )
  |P _ i \cap Q _k| ^ {-\alpha/2} \geq F ( P_i \cap Q_k) |P _ i  \cap Q_k| ^ {-\alpha/2} \geq \gamma (n _i ^k)\qquad \forall i = 1, \dots, k\,.$$
We deduce that
\begin{equation}\label{e12}
F ( P _i \cap Q_k) \geq {\gamma (n _i ^k) }{|P _i \cap Q_k| ^ {\alpha/2}  }\qquad \forall i = 1, \dots, k\,
\end{equation}
and
\begin{equation}\label{e22}
| P _i \cap Q_k| \leq \frac{\gamma (n _i^k)^ {-2/\alpha}}{\gamma \Big ( 6 +  \frac{ C}{\sqrt k}  \Big )^ {-2/\alpha} }  \qquad \forall i = 1, \dots, k\,.
\end{equation}
Now we sum the inequalities \eqref{e22} over $i = 1, \dots, k$.
We get
$$k = \sum _{i = 1} ^ k |P _i \cap Q_k| \leq \sum _{i=1} ^ k \frac{\gamma (n _i ^k) ^ {-2/\alpha}}{\gamma \Big ( 6 +  \frac{ C}{\sqrt k} \Big ) ^ {-2/\alpha} }   \leq k \,.$$
where the last inequality holds true  thanks to \eqref{f:meanb}
and assumption (H2) (ii), provided  $k$ is sufficiently large. More precisely it must be
$k \geq k _0$ and $C/\sqrt k  \leq \delta _0$, with $k _0$ and $\delta _0$ given by assumption (H2) (ii),  so it is enough to  take $k \geq \overline k := \max \{ k _0 , C^2 / \delta _0 ^ 2\}$.

We deduce that, for $k \geq \overline k$, all the inequalities \eqref{e22} hold as equalities, and then from \eqref{e12} we obtain
\begin{equation}\label{f:conc2}
F ( P _i \cap Q_k) \geq \gamma \Big ( 6 + \frac{ C}{\sqrt k} \Big ) \, \qquad \forall i = 1, \dots, k \,,
\end{equation}
which achieves the proof of \eqref{f:thesis2}.

 \qed
 \bigskip

To state next lemma, we need to introduce a definition. Given an open bounded and convex domain $\Omega$, and
a tiling $(H_i ) _{i \in I}$  of $\R^2$ made by copies $H _i$ of the unit area regular hexagon $H$,
as done in the proof of Theorem \ref{t:honeycomb}, we set
$$
\begin{array}{ll}
& I ^ {\inte} (\rho, \Omega)  := \Big \{ i \in I \ :\ H _ i \subset (\rho \Omega) \Big \} \,,
\\  \noalign{\bigskip}
& \rho ^ {\inte} (k, \Omega)  := \inf \Big \{ \rho >0 \ : \ \sharp I ^ {\inte} (\rho, \Omega) \geq k \Big \} \end{array}
$$
(for any positive $\rho$,   $\rho \Omega$ denotes  the dilation of $\Omega$ of factor $\rho$).

Since $\Omega$ is fixed, hereafter
we shorten the notation into $\rho _k = \rho ^ {\inte} (k, \Omega)$, and $ I ^ {\inte} (\rho )=  I ^ {\inte} (\rho, \Omega) $.

We observe that, setting
$$I ^ {\inte}_{\partial \Omega} (\rho )  := \Big \{ i \in I ^ {\inte} (\rho )  \ :\ H _ i \cap \partial (\rho \Omega) \neq \emptyset \Big \}\,,$$
it holds
\begin{equation}\label{difference}\sharp I ^ {\inte} (\rho_k ) - \sharp   I ^ {\inte}  _{\partial \Omega} (\rho_k )  < k\,.
\end{equation}
Indeed, otherwise $\rho _k \Omega$ would contain at least $k$ hexagons (not touching $\partial (\rho _k \Omega)$), contradicting the minimality of $\rho _k$  among the radii $\rho$ such that $\sharp I ^ {\inte} (\rho) \geq k$.

Then we can remove from the family of all the hexagons covered by $\rho _k \Omega$ some ones, all of them touching $\partial(\rho _k \Omega)$, in such a way that the remaining number is exactly $k$.  Notice that the choice of the hexagons touching $\partial(\rho _k \Omega)$ which can be removed is possibly not unique, but thanks to  \eqref{difference} there is at least one.

Keeping the above notation, we give the following

\begin{definition}\label{inner-tiling} We call an {\it inner $k$-hexagonal structure of}  $\Omega$ any $k$-hexagonal structure contained into $\rho _k \Omega$
obtained
as described above, {\it i.e.},
by
removing some hexagons touching $\partial (\rho _k \Omega)$ from the family of all hexagons contained into $\rho _k \Omega$.
\end{definition}

\begin{lemma}\label{convex-envelope}  Let  $H _k (\Omega)$ be an inner $k$-hexagonal structure of $\Omega$, and let ${\rm  conv} (H _k (\Omega))$ denote its convex envelope.  Then
 \begin{itemize}
 \item[(i)]
the number of  sides of ${\rm  conv} (H _k (\Omega))$  does not exceed $C \sqrt k$,  being $C$ a positive constant depending only on $\Omega$;

\smallskip
\item[(ii)] it holds
$$\lim _{k \to + \infty} \frac{\big |{\rm  conv} (H _k (\Omega) ) \big |  }{| \rho _k \Omega|} = 1 
$$
\end{itemize}
\end{lemma}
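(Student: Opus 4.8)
The plan is to sandwich the convex envelope between $H _k (\Omega)$ and $\rho _k \Omega$. By Definition \ref{inner-tiling}, $H _k(\Omega)$ is a union of $k$ copies of the unit-area hexagon lying inside $\rho _k \Omega$; since $\Omega$, hence $\rho _k \Omega$, is convex, we obtain
$$H _k (\Omega)\ \subseteq\ {\rm conv}\big(H _k (\Omega)\big)\ \subseteq\ \rho _k \Omega\,.$$
The second ingredient is the elementary fact that the convex hull of a finite family of convex polygons coincides with the convex hull of the (finite) set of all their vertices; hence ${\rm conv}(H _k(\Omega))$ is a convex polygon whose vertices are all vertices of the hexagonal tiling. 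In particular, two consecutive vertices of ${\rm conv}(H _k(\Omega))$ are distinct points of the underlying honeycomb lattice, so their distance is at least $\ell$, the common side length of the tiles (equivalently, the minimal distance between two distinct tiling-vertices).

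I will first prove (ii). Taking areas in the sandwich above, and using that $H _k(\Omega)$ consists of $k$ hexagons of unit area with pairwise disjoint interiors while $|\rho _k \Omega| = \rho _k ^2 |\Omega|$, we get
$$\frac{k}{\rho _k ^ 2 |\Omega|}\ \leq\ \frac{\big|{\rm conv}(H _k(\Omega))\big|}{|\rho _k \Omega|}\ \leq\ 1\,.$$
By \eqref{f:ae} one has $\rho _k / \sqrt k \to |\Omega| ^ {-1/2}$ (recall $|H| = 1$), so $k / (\rho _k ^2 |\Omega|) \to 1$ and (ii) follows by squeezing.

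Next I will prove (i). Denoting by $N _k$ the number of sides of ${\rm conv}(H _k(\Omega))$, the observation on consecutive vertices gives $\ell\, N _k \leq {\rm Per}\big({\rm conv}(H _k(\Omega))\big)$. Since ${\rm conv}(H _k(\Omega)) \subseteq \rho _k \Omega$ with both sets convex, monotonicity of the perimeter under inclusion for convex bodies yields ${\rm Per}({\rm conv}(H _k(\Omega))) \leq {\rm Per}(\rho _k \Omega) = \rho _k\, {\rm Per}(\Omega)$. Finally, by \eqref{f:ae} the sequence $\rho _k / \sqrt k$ converges, hence is bounded: $\rho _k \leq C _1 \sqrt k$ for every $k$. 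Combining, $N _k \leq (C _1\, {\rm Per}(\Omega)/\ell)\, \sqrt k =: C \sqrt k$, with $C$ depending only on $\Omega$, as claimed.

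Neither step presents a serious difficulty; the two points that need a word of care are the identification of the vertices of the convex hull with tiling-vertices (an elementary property of convex hulls of unions of polygons) and the bound $\rho _k = O(\sqrt k)$, which we simply read off from the asymptotics \eqref{f:ae} established in the proof of Lemma \ref{t:helffer}. The role of this lemma in the proof of Theorem \ref{t:honeycomb2} is to feed Lemma \ref{t:convexpol}: it provides, for every $k$, a convex polygon $Q _k := {\rm conv}(H _k(\Omega))$ with $O(\sqrt k)$ sides and area asymptotic to $|\rho _k \Omega| \sim k$, so that $m _k(Q _k)$ can be bounded from below in terms of $\gamma(6 + C/\sqrt k)$ after the appropriate normalisation.
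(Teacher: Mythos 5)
Your proof is correct, but it follows a genuinely different route from the paper's in both parts. For (i), the paper counts the hexagons of $H_k(\Omega)$ having a free side, shows that each such hexagon lies in the tubular neighbourhood $\partial(\rho_k\Omega)\oplus B_4$, and bounds the area of that neighbourhood via the Minkowski content of $\partial\Omega$; you instead observe that the vertices of ${\rm conv}(H_k(\Omega))$ are tiling vertices, so every side of the hull has length at least the tile edge length $\ell$, and then invoke monotonicity of the perimeter under inclusion of convex bodies together with $\rho_k=O(\sqrt k)$. For (ii), the paper reuses the same tubular-neighbourhood estimate through the covering $\rho_k\Omega\subseteq H_k(\Omega)\cup[\partial(\rho_k\Omega)\oplus B_4]$, whereas you exploit the exact count $|H_k(\Omega)|=k$ and squeeze $k/(\rho_k^2|\Omega|)\le |{\rm conv}(H_k(\Omega))|/|\rho_k\Omega|\le 1$ using \eqref{f:ae}. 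Your argument is shorter and more elementary (no Minkowski-content discussion is needed), and it makes essential use of the convexity of $\Omega$ exactly where the paper does (to get ${\rm conv}(H_k(\Omega))\subseteq\rho_k\Omega$); the paper's version buys a slightly more quantitative statement, namely $|\rho_k\Omega|-|{\rm conv}(H_k(\Omega))|=O(\sqrt k)$ rather than $o(k)$, but this extra precision is not needed for the limit in (ii) nor anywhere else in the proof of Theorem \ref{t:honeycomb2}. Both of the steps you flag as needing care (extreme points of a hull of polygons being polygon vertices, and the boundedness of $\rho_k/\sqrt k$) are indeed the only delicate points, and both hold.
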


\proof (i) We denote by $p$ the number of hexagons in $H _k(\Omega)$ having a free side, meaning a side which is not in common with another hexagon in
$H _k(\Omega)$. Clearly the number of sides of ${\rm  conv} (H _k (\Omega))$ is not larger than $6p$. So  we are going to estimate $p$.
We observe that, if $h$ is a copy of $H$ lying in $H _k(\Omega)$  and having a free edge, it holds necessarily
\begin{equation}\label{inclusion}
h \subseteq \partial (\rho _k \Omega) \oplus B _4  : = \big \{ x \in \R ^ 2 \ :\ {\rm dist} ( x, \partial (\rho _k \Omega)) \leq 4 \big \} \,.
\end{equation}
Indeed, if  $h'$ is a copy of $H$ lying outside $H _k(\Omega)$  and having a side in common with $h$,  by construction  $h'$ cannot be entirely contained into $\rho _k \Omega$ (because otherwise $h'$ will be an hexagon in $H _k (\Omega)$).  Therefore any such hexagon $h'$ meets necessarily the boundary of $\rho _k \Omega$, and the inclusion \eqref{inclusion} follows, since the diameter of $H$ is less than $2$.

In view of \eqref{inclusion}, and since any hexagon with a free edge has unit area, we have
$$p \leq |  \partial (\rho _k \Omega) \oplus B _4  |\,.$$
Now we observe that
$$  | \partial (\rho _k \Omega) \oplus B _4 |  = \rho _k ^ 2 |  \partial  \Omega \oplus B _{4/\rho _k} | \leq  \rho _k ^ 2 M  \frac{4}{\rho _k} \mathcal H ^ 1 (\partial \Omega)   = 4 M \mathcal H ^ 1 (\partial (\rho _k \Omega))\,,$$
where the last inequality holds for some positive constant $M= M (\Omega)$ independent of $k$. Indeed, by the Lipschitz regularity assumed on $\partial \Omega$, the  perimeter $\mathcal H ^ 1 (\partial \Omega)$  agrees with the so-called Minkowski content of $\partial \Omega$, namely with $\lim _{\varepsilon \to 0}  (2\varepsilon )^ {-1} |  \partial  \Omega \oplus B _{\varepsilon} | $.

We deduce that
$$ p \leq 4 M \mathcal H ^ 1 (\partial \Omega) \rho _k \leq C \sqrt k\,,$$
where the last equality holds for some positive constant $C = C (\Omega)$ since  $\rho _k \sim \sqrt k$ as $k \to + \infty$ ({\it cf.}  the proof of Theorem \ref{t:honeycomb}).

\medskip

(ii)  Let us denote for brevity $Q _k  :={\rm conv} ( H _k (\Omega))$.  Since
$Q _k \subseteq \rho _k \Omega$, we have immediately
$$\limsup _{ k \to + \infty } \frac{ |Q_k| } {|\rho _k \Omega|} \leq 1\,.$$
We have to prove that also the converse estimate holds true.
Since  \eqref{inclusion} is satisfied for every $h \in H _k (\Omega)$ having a free edge, it holds
$$\rho _k \Omega \subseteq H _k (\Omega) \cup \big [\partial (\rho _k \Omega) \oplus B _4 \big ]\,,$$
so that
$$|\rho _k \Omega | \leq |  H _k (\Omega)  | +  | \partial (\rho _k \Omega) \oplus B _4 | \leq |Q_k|+  | \partial (\rho _k \Omega) \oplus B _4 | \,.$$
We have already shown in part (i) of the proof that
$$  | \partial (\rho _k \Omega) \oplus B _4 | \leq  4 M \mathcal H ^ 1 (\partial (\rho _k \Omega))\,,$$
for some positive constant $M = M (\Omega)$ independent of $k$.
Therefore we have
$$\begin{array}{ll}
\displaystyle \liminf _{ k \to + \infty } \frac{ |Q_k| } {|\rho _k \Omega|}   \geq &  \displaystyle \liminf _{ k \to + \infty } \frac{|\rho _k \Omega |- | \partial (\rho _k \Omega) \oplus B _4 |} {|\rho _k \Omega|}
\\ \noalign{\medskip}
 &  \displaystyle \geq  \liminf _{ k \to + \infty } \Big ( 1-  \frac{4 M \mathcal H ^ 1 (\partial (\rho _k \Omega))} {|\rho _k \Omega|}  \Big )
\\ \noalign{\medskip}
 &  \displaystyle =  \liminf _{ k \to + \infty } \Big ( 1-  \frac{4 M  \rho _k \mathcal H ^ 1 (\partial  \Omega)} { \rho _k ^ 2 |\Omega|}  \Big )  = 1\,,
\end{array}
 $$
where the last equality holds  since  $\rho _k \sim \sqrt k$ as $k \to + \infty$.

\qed

\bigskip\bigskip {\it Proof of Theorem \ref{t:honeycomb2}}.

Let $(H_i ) _{i \in I}$ denote a tiling of $\R^2$ made by copies $H _i$ of the unit area regular hexagon $H$.

In the same way as in the proof of Lemma \ref{t:helffer},
for any positive factor of dilation $\rho$,  we introduce the families of indices
$I ^ {\inte} (\rho, \Omega)$ and
$I ^ {\ext} (\rho, \Omega)$; moreover,
for every $k \in \N$, we define the radii $\rho ^ {\inte} (k, \Omega) $ and $\rho ^ {\ext} (k, \Omega)$, and we recall that they behave asymptotically as $\frac{\sqrt k}{\sqrt {|\Omega|}} $ as $k \to + \infty$ ({\it cf}. \eqref{f:ae}).

We observe that, since $F$ satisfies assumption (H2), the map $\Omega \mapsto m _k (\Omega)$ is homogeneous of degree $\alpha$ under dilations. Moreover,  if
$\Omega \subseteq \Omega'$, it holds $ \P _k (\Omega') \subseteq \P _k (\Omega )$ and, since
$F$ satisfies assumption (H1), we  have
$$\Omega \subseteq \Omega' \quad \Rightarrow \quad F (P _i \cap \Omega) \leq F ( P _i \cap \Omega ' ) \qquad \forall \, \{ P _i \} \in \P _k (\Omega') \,.$$
Consequently, the map $\Omega \mapsto m _k (\Omega)$ is monotone increasing under inclusions.

We are ready to give an upper and lower bound for $m _k (\Omega)$.

\medskip
{\it Upper bound}.
We take $\rho = \rho ^ {\ext} (k, \Omega)$.

By using  the homogeneity and increasing monotonicity of $m _k (\cdot)$,
we get
$$m_k (\Omega) = \rho^ {-\alpha} m _k ( \rho \Omega) \leq \rho^ {-\alpha} m _k (\Omega_k)\,,$$
where $\Omega _k$ denotes a $k$-hexagonal structure;
moreover, since there exists a convex $k$-partition $\{ P _i \} \in \P_k (\Omega_k)$ having among its elements a convex polygon whose intersection with $\Omega _k$ is a copy of $H$,  we have
$m _k (\Omega_k ) \leq F (H)$.
We infer that
\begin{equation}\label{f:ub2} \limsup _{k \to + \infty}   {k ^ {\alpha/2}} {m_k (\Omega)} \leq \limsup _{k \to + \infty}  {k ^ {\alpha/2}}  {\big( \rho^ {\ext}(k, \Omega)  \big ) ^ {-\alpha} }  F (H) = |\Omega| ^ {{\alpha}/{2}}   F (H)  \,,
\end{equation}
where in the last equality we have exploited  the fact that $ \rho^ {\ext}(k, \Omega)  \sim \frac{\sqrt k}{\sqrt {|\Omega|}}$ as $k \to + \infty$.

\medskip
{\it Lower bound}.
We take $\rho = \rho ^ {\inte} (k, \Omega)$, and we choose an inner $k$-hexagonal structure $H _k (\Omega)$ according to Definition \ref{inner-tiling}.
We set $Q_k := {\rm conv} ( H _k (\Omega))$.  Since $\rho \Omega \supseteq Q _k$ (because $\rho \Omega \supseteq H _k (\Omega)$ and we assumed $\Omega$ convex),
by homogeneity and  increasing monotonicity of $m _k (\cdot)$, we have
$$m _k ( \Omega) = \rho ^{- \alpha} m _k (\rho \Omega) \geq \rho ^ {-\alpha}  m _{k }  (Q _{k } )
\,.$$
By Lemma \ref{convex-envelope} (i), the number of sides of $Q_k$ is not larger than $C \sqrt k$, for a positive constant $C = C (\Omega)$.
 Then, by Lemma \ref{t:convexpol},  there exists $\overline k$ such that
$$  m _k (Q _k) \geq \frac {|Q_k|^ {\alpha /2} } {k ^ {\alpha /2}}   \gamma \Big ( 6 + \frac{ C}{\sqrt k} \Big )  \qquad \forall k \geq \overline k\,.$$
Thus we have
$$   {k ^ {\alpha/2}} {m_k (\Omega)} \geq
  {k ^ {\alpha/2}}  {\big( \rho^ {\inte}(k, \Omega)  \big ) ^ {-\alpha} }  m _{k }  (Q _{k } )
 \geq  {\big( \rho^ {\inte}(k, \Omega)  \big ) ^ {-\alpha} }  {|Q_k|^ {\alpha /2} }   \gamma \Big ( 6 + \frac{ C}{\sqrt k} \Big )
$$
Now, we pass to the liminf as $k \to + \infty$ in the above inequality. By applying Lemma \ref{convex-envelope} (ii),  and recalling the assumption that $\gamma$ is continuous at $6$ with $\gamma (6) = F (H)$, we conclude that
\begin{equation}\label{f:lb}
\liminf _{k \to + \infty}   {k ^ {\alpha/2}} {m_k (\Omega)} \geq   |\Omega| ^ {\alpha/2} \liminf _{k \to + \infty}   \gamma \Big ( 6 + \frac{ C}{\sqrt k} \Big )    = |\Omega| ^ {\alpha/2} F (H)\,.
\end{equation}

\medskip
The proof is archived by combining \eqref{f:ub2} and \eqref{f:lb}. \qed

\section {Proof of Proposition \ref{p:l1}}\label{proof3}

The functional $F (\Omega) = \lambda _ 1 (\Omega)$ satisfies the hypotheses (H1) and (H2) of both Theorems  \ref{t:honeycomb0} and \ref{t:honeycomb}. Assumption \eqref{i1} corresponds to the hypothesis (H3) (i) of those theorems.
Next lemma shows that assumptions \eqref{i1}-\eqref{i2} ensure the validity of hypothesis (H3) (ii). Consequently, Proposition \ref{p:l1} follows.

\begin{lemma}\label{f:implication3}
Under the assumptions \eqref{i1}-\eqref{i2}, the map $n \mapsto \gamma (n)$ defined by \eqref{f:gl1} satisfies
\begin{equation}\label{f:implication}
\displaystyle \frac{1}{k} \sum _{i=1} ^ k n _i \leq 6 \ \Rightarrow \ \frac{1}{k}  \sum _{i=1} ^ k \gamma^\frac 12 (n _i )  \geq  \gamma^\frac 12 (6 ) \,.
\end{equation}
\end{lemma}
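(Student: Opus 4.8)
The plan is to reduce the implication \eqref{f:implication} to the construction of a single affine minorant of the map $n\mapsto\gamma(n)^{1/2}$. Writing $g(n):=\gamma(n)^{1/2}$, I will exhibit a real number $\sigma<0$ for which the affine function $\varphi(t):=g(6)+\sigma(t-6)$ on $[3,+\infty)$ satisfies $\varphi(n)\le g(n)$ for every integer $n\ge 3$. Granting this, for any integers $n_1,\dots,n_k\ge 3$ with $\frac1k\sum_i n_i\le 6$ we immediately obtain
\[
\frac1k\sum_{i=1}^{k}g(n_i)\ \ge\ \frac1k\sum_{i=1}^{k}\varphi(n_i)\ =\ g(6)+\sigma\Big(\frac1k\sum_{i=1}^{k}n_i-6\Big)\ \ge\ g(6),
\]
the last inequality because $\sigma<0$ and $\frac1k\sum_i n_i-6\le 0$; this is exactly \eqref{f:implication}. (If instead one replaces $\varphi$ by the non-negative, decreasing and convex function $\max\{\varphi,\sqrt{\pi}\,j_0\}$, where $j_0$ is the first positive zero of the Bessel function $J_0$, and then squares, the same Jensen-type argument gives the variant of \eqref{f:implication} with exponent $1$ in place of $\tfrac12$, which is the form of hypothesis (H3)(ii) needed in Theorem \ref{t:honeycomb}.)

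The slope I will choose is $\sigma:=\sqrt{b}-g(6)=\sqrt{5.82\,\pi}-\lambda_1(H)^{1/2}$, which is negative since $\lambda_1(H)>5.82\,\pi$. Besides the hypotheses \eqref{i1}--\eqref{i2}, the verification of $\varphi(n)\le g(n)$ will use three standard facts: the Faber--Krahn inequality, which yields $\gamma(n)\ge\pi j_0^2$ (the value of $\lambda_1$ on the disc of unit area, $\approx 18.168$) for every $n$; the classical theorems of P\'olya stating that the equilateral triangle and the square minimise $\lambda_1$ among triangles, respectively quadrilaterals, of given area, whence $\gamma(3)=4\pi^2/\sqrt3$ and $\gamma(4)=2\pi^2$; and the numerically known value $\lambda_1(H)\approx 18.59$, of which only the two-sided bound $18.4<\lambda_1(H)<18.6$ will be used.

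The verification is then a short list of elementary inequalities. For $n=6$ it is an equality. For $n=7$, $\varphi(7)=g(6)+\sigma=\sqrt{b}=\sqrt{5.82\,\pi}\le\gamma(7)^{1/2}$ by \eqref{i2}. For $n=5$, $\varphi(5)=g(6)-\sigma=2g(6)-\sqrt{b}$, so, using $\gamma(5)\ge a=6.022\,\pi$, the inequality $\varphi(5)\le g(5)$ reduces to
\[
2\,\lambda_1(H)^{1/2}\ \le\ \sqrt{6.022\,\pi}+\sqrt{5.82\,\pi}\,,\qquad\text{i.e.}\qquad \lambda_1(H)\ \le\ \tfrac14\big(\sqrt{6.022\,\pi}+\sqrt{5.82\,\pi}\big)^2\approx 18.60,
\]
which holds because $\lambda_1(H)\approx 18.59$. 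For $n=3$ and $n=4$ one checks $\varphi(3)=4g(6)-3\sqrt{b}\approx 4.42<2\pi/3^{1/4}=g(3)$ and $\varphi(4)=3g(6)-2\sqrt{b}\approx 4.38<\pi\sqrt2=g(4)$. Finally, since $\varphi$ is decreasing, for every $n\ge 8$ we have $\varphi(n)\le\varphi(8)=2\sqrt{b}-g(6)\approx 4.24\le\sqrt{\pi}\,j_0\approx 4.26\le\gamma(n)^{1/2}$, the middle inequality using $\lambda_1(H)\ge 18.4$ and the last one Faber--Krahn. Hence $\varphi(n)\le g(n)$ for all integers $n\ge 3$, and the lemma follows.

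The one genuinely delicate point is the inequality at $n=5$: it holds only with a margin of order $10^{-2}$ in $\lambda_1(H)$, which is exactly why the constants in \eqref{i2} are prescribed to three significant digits, and it forces one to invoke accurate numerics for $\lambda_1(H)$. The cases $n=3,4$ are comfortable, but there the crude Faber--Krahn bound on $\gamma(3),\gamma(4)$ would not suffice, so P\'olya's sharp values are genuinely needed; likewise the range $n\ge 8$ needs both Faber--Krahn and a (non-sharp) lower bound on $\lambda_1(H)$.
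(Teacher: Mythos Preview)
Your proof is correct and takes a genuinely different route from the paper. The paper proceeds by induction on $k$, ordering the $n_i$'s so that $n_1=\max_i n_i$, $n_2=\min_i n_i$, and then running a lengthy case analysis on the pair $(n_1,n_2)$ (Cases $n_1\ge 9$, $n_1=8$, $n_1=7$, each split into subcases according to $n_2\in\{3,4,5\}$); in each subcase a small subset of the $n_i$'s with average at least $6$ is isolated, the required inequality is checked for that subset using the numerical bounds \eqref{g3}--\eqref{gb}, and the induction hypothesis handles the rest. Your argument instead produces in one stroke an affine minorant $\varphi(t)=g(6)+(\sqrt b-g(6))(t-6)$ of $g(n)=\gamma(n)^{1/2}$ on the integers $n\ge 3$, after which the implication \eqref{f:implication} is immediate by linearity. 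The key numerical checks you perform are essentially the same ones hidden inside the paper's case analysis (in particular the tight inequality at $n=5$, equivalent to $\sqrt a+\sqrt b\ge 2\gamma(6)^{1/2}$, is exactly the paper's Subcase~(3a)), but your packaging is far more transparent: it makes explicit that the entire lemma rests on a single slope choice and five pointwise comparisons, and it shows at a glance why the constants $a,b$ in \eqref{i2} are calibrated the way they are. Your parenthetical observation that $\max\{\varphi,\sqrt\pi\,j_0\}^2$ then yields the exponent-$1$ variant via Jensen is also correct and more direct than the paper's remark that \eqref{f:implication4} follows from \eqref{f:implication} by Cauchy--Schwarz.
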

Before proving this lemma, let us observe that if \eqref{f:implication} holds, then one also gets
\begin{equation}\label{f:implication4}
\displaystyle \frac{1}{k} \sum _{i=1} ^ k n _i \leq 6 \ \Rightarrow \ \frac{1}{k}  \sum _{i=1} ^ k \gamma (n _i )  \geq  \gamma (6 ) \,
\end{equation}
as a direct consequence of Cauchy-Schwartz inequality. It is important to notice that \eqref{f:implication4} may be proved to hold even in the absence of \eqref{f:implication}. This could occur in the case in which the estimates on the values $a,b$ in \eqref{i2} are not fine enough for \eqref{f:implication}, but good for  \eqref{f:implication4}. The proof of \eqref{f:implication4} follows step by step Lemma \ref{f:implication3}.

The assumptions  $a= 6.022 \pi$ and $b = 5.82\pi$ in \eqref{i2} are good enough to prove Lemma \ref{f:implication3}, but there is some flexibility. The precise requirements for $a$ and $b$ will naturally follow from the proof.

We refer the reader to \cite{rsj16} for a precise computation of the eigenvalues.
\proof
We prove the statement by induction over $k$.

Assume $k= 1$. We have to show that $n \leq 6$ $\Rightarrow$ $\gamma (n)  \geq \gamma (6)$.
This is straightforward, since for every $n \leq 6$ an optimal polygon $P _n ^ {\rm opt}$ for the minimization problem which defines $\gamma (n)$  can be approximated in Hausdorff distance by a sequence $\{H _j\}$ of convex hexagons, so that
$$\gamma (n) = \lambda _ 1  (P _n ^ {\rm opt} ) | P _n ^ {\rm opt}|  = \lim _j \lambda _ 1  (H _j ) | H _j| \geq \gamma (6)\,.$$

We now assume that \eqref{f:implication} holds true for a certain $k \in \N$, and we deduce it for $k+1$.

Given $n _1, \dots, n _{k+1} $ satisfying
\begin{equation}\label{f:meanD}
\frac{1}{k+1} \sum _{i=1} ^ {k+1} n _i \leq 6 \, ,
\end{equation}
let us show that
\begin{equation}\label{f:th}
\frac{1}{k+1}  \sum _{i=1} ^ {k+1} \gamma^\frac12 (n _i )  \geq  \gamma^\frac12 (6 )\,.
\end{equation}
Without loss of generality, we assume that
$$\max _{i = 1, \dots , k+1} n _i = n _1 \qquad \text{ and } \qquad \min _{i = 1, \dots , k+1} n _i = n _2\,.$$
The idea is to use an exact estimate for a small number of integers, including $n _1$ and $n _2$, which have average at least $6$, and to use the induction argument for the remaining integers.
For convenience, let us list below the inequalities we are going to exploit:
\begin{eqnarray}
\gamma ^\frac12(3) = \lambda _1 ^\frac12(P_3 ^*) = \Big (\frac{4 \pi ^ 2}{\sqrt 3} \Big ) ^\frac12\geq 2.693 \pi ^\frac12
& \label{g3} \\
\noalign{\medskip}
\gamma^\frac12 (4) = \lambda _ 1^\frac12 ( P _ 4 ^*) = \sqrt{2} \pi  \geq 2.506 \pi ^\frac12
& \label{g4} \\
\noalign{\medskip}
\gamma ^\frac12 (5) \geq a ^\frac12 \ge  2.4539\pi ^\frac12  \qquad \qquad \quad \
& \label{g5} \\
\noalign{\medskip}
\gamma ^\frac12 (6)  = \lambda _ 1 ^\frac12 ( P_ 6 ^* ) \leq 2.433 \pi ^\frac12 \qquad \quad
& \label{g6}
\\
\noalign{\medskip}
\gamma ^\frac12 (7) \geq b ^\frac12 \ge 2.4124 \pi ^\frac12
\,. \qquad \qquad \quad  \ \
& \label{g7}
\\
\noalign{\medskip}
\lambda_1 ^\frac12 (B) \geq 2,404 \pi ^\frac12
\,. \qquad \qquad \quad  \ \
& \label{gb}
\end{eqnarray}
Above, $B$ is the ball of unit area whose value is explicitly known in term of the Bessel function $J_0$ and is larger than $5.783 \pi$.
Notice that \eqref{g3}-\eqref{g4} hold true since the regular triangle $P_3^*$ and the square
$P_4^*$ minimize $\lambda _1$ among triangles and quadrilaterals of given area (the proof by Steiner symmetrization can be found for instance in \cite[Section 3]{H06}); on the other hand, \eqref{g5}-\eqref{g7} are
 exactly our assumption \eqref{i2}. Inequality \eqref{g6} is a consequence of hypothesis \eqref{i1} associated to a numerical estimate from above of the eigenvalue on the regular hexagon.

We are going to distinguish the three cases $n _1 \geq 9$, $n _1 = 8$, and $n _ 1 = 7$.

\medskip
$\bullet$ {\it  Case 1: $n_1 \geq 9$}.

\smallskip
Clearly $n _2 <6$, and we distinguish the three subcases $n _2 = 3, 4, 5$.

\medskip
-- {\it Subcase (1a): $n _2 = 3$}.

Since $n _1 + n _ 2 \geq 12$, by \eqref{f:meanD} we have $\sum _{i = 3} ^ {k+1} n _i  \leq (k-1) 6$; hence, from the induction hypothesis,  we infer that
$ \sum _{i=3} ^ {k+1} \gamma^\frac12 (n _i )  \geq  (k-1) \gamma^\frac12 (6 )$.
The thesis follows by adding to the previous inequality the following one:
\begin{equation}\label{f:add}
\gamma^\frac12 (n _1) + \gamma^\frac12 (n _2) \geq \lambda _ 1 (B) + \gamma^\frac12 (3) \geq 2,404 \pi ^\frac12 + 2.693 \pi ^\frac12  \geq 2 \gamma^\frac12 (6)\, .
\end{equation}
Here we have used \eqref{g3} to estimate $\gamma (n _2)$, and the Faber Krahn inequality  in order to bound from below  $\gamma (n_1)$ with the first Dirichlet eigenvalue of the ball $B $ of unit area.   The last inequality in \eqref{f:add} follows from \eqref{g6}.

\medskip
-- {\it Subcase (1b): $n _2 = 4$}.

We repeat the same argument as in Subcase (1a), with the inequality \eqref{f:add} replaced by the following one, obtained from \eqref{g4}:
\begin{equation}\label{f:add2}
\gamma^\frac12 (n _1) + \gamma^\frac12 (n _2) \geq \lambda _ 1^\frac12 (B) + \gamma^\frac12 (4) \geq 2,404 \pi ^\frac12 + 2.506 \pi ^\frac12  \geq 2 \gamma^\frac12 (6)\,.
\end{equation}

\medskip
-- {\it Subcase (1c): $n _2 = 5$}.

By the definition of $n _2$ we have that, for every $i \geq 2$, $n_i <6$ implies $n _i = 5$.
Moreover, since the average of the $n_i$'s  does not exceed $6$, and $n _1 \geq 9$, there exist at least another integer, say $n _3$, such that $n _ 3 = 5$.

Since $n _1 + n _ 2 + n _3 \geq 9 + 5+ 5 > 18$, by \eqref{f:meanD} and the induction hypothesis we have
$
 \sum _{i=4} ^ {k+1} \gamma^\frac12 (n _i )  \geq  (k-2) \gamma^\frac12 (6 )$.

We conclude by adding  to the previous inequality the following one:
\begin{gather}\label{f:add1}
\gamma^\frac12 (n _1) + \gamma^\frac12 (n _2)  + \gamma ^\frac12(n _3) \geq \lambda ^\frac12_ 1 (B) + 2 \gamma^\frac12 (5) \geq 2,404 \pi ^\frac12 + 2a^\frac12  \\
\geq 2,404 \pi ^\frac12 + 2 \cdot   2.4539\pi ^\frac12 \geq 3 \gamma ^\frac12(6) \nonumber \,.
\end{gather}
Here we have used the Faber-Krahn inequality, the assumption \eqref{i2} made on $\gamma (5)$, and \eqref{g6}.

\medskip
$\bullet$ {\it  Case 2: $n_1 = 8$}.

\medskip
-- {\it Subcase (2a): $n _2 = 4$}.

Since $n _ 1+ n _ 2 \geq 12$, we proceed as in cases  (1a) and (1b).  Indeed, the same inequality as in \eqref{f:add2} holds, and
the thesis follows as usual by addition and exploiting the induction hypothesis.

\medskip
-- {\it Subcase (2b): $n _2 = 5$}.

By arguing as done as in case (1c), we see that there exists at least another integer, say $n _3$, such that $n _ 3 = 5$.
We have $n _1 + n _2 + n _ 3 = 18$. Then we can conclude  as done in case (1c), since
$
 \sum _{i=4} ^ {k+1} \gamma^\frac12 (n _i )  \geq  (k-2) \gamma^\frac12 (6 )$, and the
the same inequality as in \eqref{f:add1} is in force.

\medskip
-- {\it Subcase (2c): $n _2 = 3$}.

We distinguish two further subcases:

(i) If in the family $\{ n _3, \dots, n _{k+1}\}$ there exists at least an integer, say $n _3$, such that $n _ 3  \in \{7, 8 \}$, then we have
$n _1 + n _2 + n _3 \geq 8 + 3 + 7 = 18$.  Then the induction hypothesis ensures that $
 \sum _{i=4} ^ {k+1} \gamma^\frac12 (n _i )  \geq  (k-2) \gamma^\frac12 (6 )$, and we conclude by adding the inequality
$$\gamma^\frac12 (n _1 ) + \gamma ^\frac12(n _2 ) + \gamma^\frac12 (n _3 ) \geq \lambda^\frac12 _ 1 (B) + \gamma^\frac12 (3) + \lambda^\frac12 _ 1 (B) \geq 3 \gamma^\frac12 (6) \,.$$

(ii) If $n _i \leq 6$ for every $i \in \{3, \dots, k +1\}$, by  \eqref{g3}-\eqref{g4}-\eqref{g5} we have $\gamma (n _i) \geq \gamma (6)$ for every $i \in \{3, \dots, k +1\}$. Then the inequality $\sum _{i = 3 } ^ { k+1}  \gamma^\frac12 (n _i) \geq (k-2) \gamma ^\frac12(6)$  holds true, and
  the thesis follows by adding the  inequality
$$\gamma^\frac12 (n _1 ) + \gamma^\frac12 ( n_2) \geq \lambda ^\frac12_ 1 (B) + \gamma^\frac12 (3) \geq  2 \gamma^\frac12 (6)\, . $$

\medskip
$\bullet$ {\it  Case 3: $n_1 = 7$}.

\medskip
-- {\it Subcase (3a): $n _2 = 5$}.

Since $n _1 + n _ 2 = 12$,  we proceed as in cases (1a), (1b), and (2a). Namely, by \eqref{f:meanD}, we have $\sum _{i = 3} ^ {k+1} n _i  \leq (k-1) 6$; hence, by induction hypothesis, we have $\sum _{i = 3 } ^ { k+1}  \gamma^\frac12 (n _i) \geq (k-2) \gamma ^\frac12(6)$. The thesis follows by adding the inequality
$$\gamma^\frac12(n _1 )  + \gamma^\frac12 (n _ 2) = \gamma^\frac12 (7) + \gamma ^\frac12(5) \geq a^\frac12 + b^\frac12 \geq   2.4539\pi ^\frac12 + 2.4124 \pi ^\frac12 \geq 2 \gamma^\frac12 (6)\, , $$
which follows from our assumptions \eqref{i1}-\eqref{i2}.

\medskip
-- {\it Subcase (3b): $n _2 = 4$}.

We distinguish two further subcases:

(i) If in the family $\{ n _3, \dots, n _{k+1}\}$ there exists at least an integer, say $n _3$, such that $n _ 3 =7$, then we have
$n _1 + n _2 + n _3 \geq 7 + 4 + 7 = 18$.  Then by induction hypothesis we have $
 \sum _{i=4} ^ {k+1} \gamma ^\frac12(n _i )  \geq  (k-2) \gamma^\frac12 (6 )$, and we conclude by adding the inequality
\begin{gather}
\gamma^\frac12 (n _1 ) + \gamma ^\frac12(n _2 ) + \gamma ^\frac12(n _3 ) = 2 \gamma^\frac12 (7) + \gamma ^\frac12(4)  \geq  2 b^\frac12 +  2.506 \pi ^\frac12 \geq \nonumber \\
2\cdot 2.4124 \pi ^\frac12 + 2.506 \pi ^\frac12 \geq 3 \gamma^\frac12 (6) \nonumber\,.
\end{gather}

(ii) If $n _i \leq 6$ for every $i \in \{3, \dots, k +1\}$, by  \eqref{g3}-\eqref{g4}-\eqref{g5} we have $\gamma (n _i) \geq \gamma (6)$ for every $i \in \{3, \dots, k +1\}$. Then  the thesis follows by adding the two inequalities $\sum _{i = 3 } ^ { k+1}  \gamma^\frac12 (n _i) \geq (k-2) \gamma^\frac12 (6)$ and
\begin{gather}
\gamma^\frac12 (n _1 ) + \gamma^\frac12 ( n_2) = \gamma ^\frac12(7) + \gamma^\frac12 (4) \geq b^\frac12 + 2.506 \pi ^\frac12 \geq \nonumber \\  2.4124 \pi ^\frac12 + 2.506 \pi ^\frac12 \geq  2 \gamma^\frac12 (6)\nonumber\, .
\end{gather}

\medskip
-- {\it Subcase (3c): $n _2 = 3$}.

We have to distinguish three subcases:

(i) Assume that  in the family  $\{ n _3, \dots, n _{k+1}\}$ there exists at least two integers, say $n _3$ and $n _4$, such that $n _ 3 = n _ 4= 7$. Then we have $n _1 + n _2 + n _3 + n _4 = 24$.  By \eqref{f:meanD}
we have $\sum _{i = 5} ^ {k+1} n _i  \leq (k-4) 6$; hence, from the induction hypothesis,  it holds
$
 \sum _{i=5} ^ {k+1} \gamma^\frac12 (n _i )  \geq  (k-4) \gamma^\frac12 (6 )$.
 We conclude by adding the estimate
\begin{gather}
\gamma ^\frac12(n _1) + \gamma ^\frac12(n _2 ) + \gamma^\frac12 (n _3)  + \gamma ^\frac12(n _4) \geq 3 \gamma ^\frac12(7) + \gamma ^\frac12(3) \geq 3 b^\frac12 + \gamma^\frac12 (3) \geq \nonumber \\ 3\cdot 2.4124 \pi ^\frac12 +  2.693 \pi ^\frac12 \geq 4 \gamma^\frac12 (6)\nonumber\,.
\end{gather}

(ii) Assume that in the family  $\{ n _3, \dots, n _{k+1}\}$ there exists one integer, say $n _3$, such that $n _ 3 = 7$, while $n _i \leq 6$ for all $i \in \{4 , \dots, k +1\}$. Since we have already seen in subcase (i) of case (3b) that $2 \gamma ^\frac12(7) + \gamma ^\frac12(4) \geq 3 \gamma ^\frac12(6)$, and since $\gamma (3) > \gamma (4)$, a fortiori we have $2 \gamma^\frac12 (7) + \gamma^\frac12 (3) \geq 3 \gamma ^\frac12(6)$.  Then  the thesis follows by adding the two inequalities $\sum _{i = 4 } ^ { k+1}  \gamma^\frac12 (n _i) \geq (k-3) \gamma ^\frac12(6)$ and
$$\gamma ^\frac12(n _1 ) + \gamma ^\frac12( n_2) + \gamma^\frac12 (n _3) = 2 \gamma ^\frac12(7) + \gamma ^\frac12(3) \geq 3 \gamma^\frac12(6)\, . $$

(ii) Eventually, assume that $n _i \leq 6$ for every $i \in \{3, \dots, k +1\}$.
Since we have already seen in subcase (ii) of case (3b) that $ \gamma^\frac12 (7) + \gamma^\frac12 (4) \geq 2 \gamma^\frac12 (6)$, and since $\gamma (3) > \gamma (4)$, a fortiori we have $\gamma^\frac12 (7) + \gamma ^\frac12(3) \geq 2 \gamma^\frac12 (6)$.
 Then  the thesis follows by adding the two inequalities $
\sum _{i = 3 } ^ { k+1}  \gamma^\frac12 (n _i) \geq (k-2) \gamma^\frac12 (6)$ and
$$\gamma ^\frac12(n _1 ) + \gamma^\frac12 ( n_2)  =  \gamma ^\frac12(7) + \gamma ^\frac12(3) \geq 2 \gamma ^\frac12(6)\,.$$

\qed
\bigskip

\section{Appendix}\label{app}

\begin{lemma}\label{giamma0}
The function
\[
\psi(t)=\left(\frac{2t\sin(\pi/t)+\sqrt{2\pi t\sin(2\pi/t)}}{\sqrt{2 t\sin(2\pi/t)}}\right)^{2/3}
\]
is decreasing and strictly convex on $[3,+\infty)$.
\end{lemma}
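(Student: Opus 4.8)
The plan is first to replace the unwieldy expression by a tractable one. Putting $\theta:=\pi/t$, which ranges over $(0,\pi/3]$ as $t$ ranges over $[3,+\infty)$, and using $\sin(2\pi/t)=2\sin(\pi/t)\cos(\pi/t)$, a direct simplification collapses the bracket defining $\psi$:
\[
\frac{2t\sin(\pi/t)+\sqrt{2\pi t\sin(2\pi/t)}}{\sqrt{2t\sin(2\pi/t)}}=\sqrt{\pi}\Bigl(1+\sqrt{\tfrac{\tan\theta}{\theta}}\Bigr)=\sqrt{\pi}+\sqrt{t\tan(\pi/t)}.
\]
Hence $\psi(t)=\bigl(\sqrt{\pi}+\sqrt{h(t)}\bigr)^{2/3}$ with $h(t):=t\tan(\pi/t)>0$ on $[3,+\infty)$; equivalently $\psi=G\circ h$ with $G(y):=(\sqrt{\pi}+\sqrt{y})^{2/3}$. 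Writing again $\theta=\pi/t$, a direct computation gives the two elementary facts
\[
h'(t)=\tan\theta-\theta\sec^2\theta=\frac{\sin 2\theta-2\theta}{2\cos^2\theta}<0,\qquad h''(t)=\frac{2\pi^2}{t^3}\sec^2\theta\tan\theta>0,
\]
the sign of $h'$ being that of $\sin 2\theta-2\theta<0$.

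The decreasing part of the statement is then immediate, since $G'>0$ and $h'<0$ give $\psi'=G'(h)h'<0$; the substance is the convexity, which I would obtain by going through the logarithmic convexity of $h$. From $G'(y)=\tfrac13 y^{-1/2}(\sqrt{\pi}+\sqrt{y})^{-1/3}$ and $G''(y)=-\tfrac1{18}y^{-3/2}(\sqrt{\pi}+\sqrt{y})^{-4/3}(3\sqrt{\pi}+4\sqrt{y})$ one reads off
\[
\frac{G'(y)}{|G''(y)|}=\frac{6y(\sqrt{\pi}+\sqrt{y})}{3\sqrt{\pi}+4\sqrt{y}}>y\qquad\text{for every }y>0,
\]
because $3\sqrt{\pi}+2\sqrt{y}>0$. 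Since $h''>0$ on $[3,+\infty)$, this implies
\[
\psi''=G''(h)(h')^2+G'(h)h''=G'(h)h''-|G''(h)|(h')^2>|G''(h)|\bigl(h\,h''-(h')^2\bigr),
\]
so it is enough to prove $h\,h''\geq(h')^2$ on $[3,+\infty)$. Substituting the formulae for $h,h',h''$ and using $\tan^2\theta\sec^2\theta\cos^4\theta=\sin^2\theta$ together with $\pi^2/t^2=\theta^2$, this is equivalent to the trigonometric estimate
\[
8\,\theta^2\sin^2\theta\geq(2\theta-\sin 2\theta)^2\qquad\text{for }\theta\in(0,\pi/3].
\]

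This last inequality I would prove crudely. From $y-\sin y\le y^3/6$ for $y\ge 0$, applied with $y=2\theta$, one gets $(2\theta-\sin 2\theta)^2\le\tfrac{16}{9}\theta^6$; from $\sin\theta\ge\theta/2$ on $(0,\pi/3]$ (the derivative of $\sin\theta-\theta/2$ equals $\cos\theta-\tfrac12\ge 0$ there) one gets $8\theta^2\sin^2\theta\ge 2\theta^4$; and $2\theta^4\ge\tfrac{16}{9}\theta^6$ holds because $\theta^2\le(\pi/3)^2=\pi^2/9<9/8$. Chaining these gives $8\theta^2\sin^2\theta\ge 2\theta^4\ge\tfrac{16}{9}\theta^6\ge(2\theta-\sin 2\theta)^2$, hence $h\,h''\ge(h')^2$, hence $\psi''>0$ on $(3,+\infty)$, which is strict convexity. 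The argument is conceptually easy; the only real work is the bookkeeping in the two reductions — simplifying $\psi$ to $(\sqrt\pi+\sqrt{t\tan(\pi/t)})^{2/3}$, and reducing $h\,h''-(h')^2$ to the displayed inequality — plus checking that the numerical slack in $8\pi^2<81$ is genuinely there, which it is, comfortably.
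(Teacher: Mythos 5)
Your proof is correct; I checked the simplification of $\psi$, the formulas for $h'$, $h''$, $G'$, $G''$, the ratio bound $G'/|G''|>y$, the reduction of $h\,h''\ge (h')^2$ to $8\theta^2\sin^2\theta\ge(2\theta-\sin 2\theta)^2$, and the three elementary estimates in the final chain (the slack $\pi^2/9<9/8$ is indeed there, and all bounds remain valid at the endpoint $\theta=\pi/3$, so you in fact get $\psi''>0$ on all of $[3,+\infty)$, not just the open half-line). The opening move is the same as the paper's: both reduce $\psi$ to $\pi^{1/3}\bigl(1+\sqrt{\tan s/s}\bigr)^{2/3}$ with $s=\pi/t$, and monotonicity is immediate in both. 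Where you genuinely diverge is the convexity step. The paper keeps the three-level composition $t\mapsto s\mapsto g(s)=\tan s/s\mapsto (1+\sqrt g)^{2/3}$, discards the positive term coming from $s''>0$, and is led to the inequality $\tfrac32\,g\,g''\ge (g')^2$ in the variable $s$; this it proves by writing the left side as $k(s)/(s^4\cos^4 s)$ and establishing $k>0$ through $k(0)=k'(0)=0$ and a Taylor-expansion analysis of $k''$, including a separate case check on $[\pi/4,\pi/3]$. You instead absorb the reciprocal substitution into $h(t)=t\tan(\pi/t)$, isolate the outer function $G(y)=(\sqrt\pi+\sqrt y)^{2/3}$, and use the clean ratio bound $G'/|G''|>y$ to reduce everything to the log-convexity $h\,h''\ge(h')^2$ of $h$ in $t$, which becomes the single inequality $8\theta^2\sin^2\theta\ge(2\theta-\sin2\theta)^2$ and is dispatched by three one-line bounds. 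Your route buys a shorter and more transparent endgame, at the price of having to compute $G''$ explicitly and verify the ratio bound; the paper's route avoids any analysis of the outer power function but pays for it with the heavier trigonometric bookkeeping around $k''$. Either way the lemma is established.
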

\begin{proof}
Writing
\[
g(s)=\frac{\tan s}{s},\qquad h(s)= 1+\sqrt{g(s)},
\]
we obtain that
\[
\psi(t) = \left(\sqrt{t\tan(\pi/t)}+\sqrt\pi\right)^{2/3}
= \pi^{1/3}\, [h(s(t))]^{2/3},
\]
where
\[
s=s(t)=\frac{\pi}{t}\in\left(0,\frac{\pi}{3}\right],\qquad s'=-\frac{\pi}{t^2}=-\frac{s^2}{\pi}<0, \qquad
s''=\frac{2\pi}{t^3}=\frac{2s^3}{\pi^2}>0.
\]
Direct calculations show that, for $s\in (0,\pi/3]$,
\[
\begin{split}
g'(s)&=\frac{1}{s\cos^2s}-\frac{\tan s}{s^2} = \frac{2s-\sin 2s}{2s^2\cos^2s}>0,\smallskip\\
g''(s)&=\frac{2\sin s}{s\cos ^3s}-\frac{2 }{s^2\cos ^2s}+ \frac{2 \tan s}{s^3} =
2\frac{s^2 \sin s - s \cos s + \sin s \cos^2 s}{s^3\cos ^3s}.
\end{split}
\]
Since
\[
h'(s)=\frac{g'(s)}{2\sqrt{g(s)}}>0,\qquad
h''(s)=\frac{g''(s)}{2\sqrt{g(s)}} - \frac{[g'(s)]^2}{4[g(s)]^{3/2}}{4[g(s)]^{3/2}},
\]
on the considered interval, on the one hand we infer
\[
\pi^{-1/3}\psi'(t)= \frac23[h(s(t))]^{-1/3}h'(s(t))\cdot s'(t)<0,
\]
so that $\psi$ is decreasing. On the other hand, we are going to show the positivity of
\[
\begin{split}
\frac{3\pi^{-1/3}}{2}\psi''(t)&= -\frac13[h(s)]^{-4/3}[h'(s)\cdot s']^2 +
[h(s)]^{-1/3}h''(s)\cdot [s']^2 + [h(s)]^{-1/3}h'(s)\cdot s'' \\
&>
\frac13 h^{-4/3}[s']^2 \left\{3h\cdot h'' - [h']^2\right\}\\
&=
\frac13 h^{-4/3}[s']^2\left\{3\left(1+\sqrt{g}\right)\cdot \frac{2g\cdot g'' - [g']^2}{4g^{3/2}}
- \frac{[g']^2}{4g}\right\}\\
&>
\frac13 h^{-4/3}[s']^2\frac{1+\sqrt{g}}{g^{3/2}}\left\{\frac32 g\cdot g'' - [g']^2\right\},
\end{split}
\]
where
\[
\begin{split}
\frac32 g\cdot g'' - [g']^2 &= \frac{3\sin s(s^2 \sin s - s \cos s + \sin s \cos^2 s) - (s  - \sin s \cos s)^2}{s^4\cos ^4s}\\
&=\frac{3s^2 \sin^2 s - s\sin s \cos s + 2 \sin^2 s \cos^2 s - s^2 }{s^4\cos ^4s}\\
&=\frac{\frac12 s^2 - \frac32 s^2 \cos 2s - \frac12 s\sin 2s + \frac14 - \frac14 \cos 4s }{s^4\cos ^4s}
=:\frac{k(s)}{s^4\cos ^4s}.
\end{split}
\]
By direct calculations we infer $k(0)=k'(0)=0$ and, for $s\in (0,\pi/4]$,
\[
\begin{split}
k''(s)
&= 1 + (-5 + 6 s^2) \cos 2 s + 14 s \sin 2 s + 4 \cos 4 s  \ge 1 -5 \cos 2 s + 7 \sin^2 2 s + 4 \cos 4 s\\
&= \frac92 -5 \cos 2 s + \frac12 \cos 4 s \ge \frac92 -5 \left(1-\frac{(2 s)^2}{2} +
\frac{(2 s)^4}{4!}\right) + \frac12 \left(1-\frac{(4 s)^2}{2}\right)\\
&= 6s^2 - \frac{10}{3}s^4 >0;
\end{split}
\]
Finally, also when $s\in [\pi/4,\pi/3]$
\[
\begin{split}
k''(s)
&  \ge
1-\frac12\left(-5 + 6 \frac{\pi^2}{9}\right) + 7\pi\sqrt3 - 4>0,
\end{split}
\]
and also the convexity of $\psi$ follows.
\end{proof}
\begin{lemma}\label{giamma1}
The function
\[
\varphi(t)=\left(\frac{2t\sin(\pi/t)+\sqrt{2\pi t\sin(2\pi/t)}}{\sqrt{2 t\sin(2\pi/t)}}\right)^2
\]
is decreasing and strictly convex on $[3,+\infty)$.
\end{lemma}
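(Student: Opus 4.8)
The plan is to mimic the proof of Lemma~\ref{giamma0}, since $\varphi$ differs from $\psi$ only by the exponent ($2$ in place of $2/3$), and in fact the larger exponent makes the convexity estimate easier. First I would simplify the quotient exactly as there: using $\sin(2\pi/t)=2\sin(\pi/t)\cos(\pi/t)$ one checks that
\[
\frac{2t\sin(\pi/t)+\sqrt{2\pi t\sin(2\pi/t)}}{\sqrt{2 t\sin(2\pi/t)}}=\sqrt{t\tan(\pi/t)}+\sqrt\pi ,
\]
so that, setting $s=s(t)=\pi/t\in(0,\pi/3]$, $g(s)=(\tan s)/s$ and $h(s)=1+\sqrt{g(s)}$, one has $\varphi(t)=\pi\,[h(s(t))]^2$. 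From the proof of Lemma~\ref{giamma0} I may take for granted that $s'=-s^2/\pi<0$, $s''=2s^3/\pi^2>0$, and that on $(0,\pi/3]$ the function $g$ satisfies $g>0$, $g'>0$, together with the key inequality $\tfrac32\,g\,g''-(g')^2>0$; in particular $g\,g''>\tfrac23(g')^2\ge 0$, hence $g''>0$ there as well.

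The monotonicity is then immediate: $\varphi'(t)=2\pi\,h(s)\,h'(s)\,s'(t)$ with $h>0$, $h'=g'/(2\sqrt g)>0$ and $s'<0$, hence $\varphi'<0$ on $[3,+\infty)$. For the strict convexity I would differentiate once more,
\[
\frac{\varphi''(t)}{2\pi}=\big[(h'(s))^2+h(s)\,h''(s)\big]\,(s'(t))^2+h(s)\,h'(s)\,s''(t),
\]
and observe that the last term is strictly positive (as $h,h',s''>0$), so that it suffices to prove $(h')^2+h\,h''>0$ on $(0,\pi/3]$. A direct computation from $h'=g'/(2\sqrt g)$ and $h''=(2g\,g''-(g')^2)/(4g^{3/2})$ gives
\[
(h')^2+h\,h''=\frac{2\,g\,g''\,(1+\sqrt g)-(g')^2}{4\,g^{3/2}} ,
\]
and since $1+\sqrt g>1$ and $g\,g''\ge 0$ we obtain $2g\,g''(1+\sqrt g)-(g')^2\ge 2g\,g''-(g')^2=\big(\tfrac32 g\,g''-(g')^2\big)+\tfrac12 g\,g''>0$, using the inequality recalled above. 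This yields $\varphi''>0$ on $[3,+\infty)$ and concludes the proof.

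I do not expect any genuine obstacle here: the only nontrivial estimate, namely $\tfrac32 g\,g''-(g')^2>0$ on $(0,\pi/3]$ (equivalently, the positivity of the explicit function $k$ appearing in the proof of Lemma~\ref{giamma0}), has already been established, and passing from exponent $2/3$ to exponent $2$ only strengthens the relevant combination $(h')^2+h\,h''$, so the present argument is essentially a transcription of the previous one with its final step simplified.
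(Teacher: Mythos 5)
Your argument is correct: all the facts you import from the proof of Lemma \ref{giamma0} (namely $g>0$, $g'>0$, and $\tfrac32\,g\,g''-(g')^2>0$ on $(0,\pi/3]$, whence also $g''>0$ there) are indeed established in that proof, your chain-rule identities for $\varphi'$ and $\varphi''$ check out, and the computation $(h')^2+h\,h''=\bigl(2g\,g''(1+\sqrt g)-(g')^2\bigr)/(4g^{3/2})$ is right, so the sign conclusions follow. The paper, however, takes a much shorter route: it observes that $\varphi=\psi^3$, where $\psi$ is the function of Lemma \ref{giamma0}, and since $\psi>0$, $\psi'<0$, $\psi''>0$ on $[3,+\infty)$ it gets immediately $\varphi'=3\psi^2\psi'<0$ and $\varphi''=6\psi(\psi')^2+3\psi^2\psi''>0$. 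In other words, the paper uses only the \emph{conclusion} of Lemma \ref{giamma0} together with the fact that $u\mapsto u^3$ is increasing and convex on $[0,+\infty)$, whereas you reopen that lemma's proof and rerun the full second-derivative analysis with the new exponent. Both are valid; your closing remark that "passing from exponent $2/3$ to exponent $2$ only strengthens the relevant combination" is exactly what the composition argument captures in one line, so you could have saved yourself the entire $h$, $g$ computation.
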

\begin{proof}
The proof is a direct consequence of Lemma \ref{giamma0}. Indeed, keeping the
corresponding notation, we have
\[
\begin{split}
\varphi(t)&= [\psi(t)]^3,\\
\varphi'(t)&= 3[\psi(t)]^2\psi'(t)<0,\\
\varphi''(t)&= 6\psi(t)[\psi'(t)]^2 + 3[\psi(t)]^2\psi''(t)>0,
\end{split}
\]
as long as $t\ge 3$.
\end{proof}

\begin{lemma}\label{giamma2}
The function
\[
\varphi(t)=\frac{\pi\, 2^{4/t}\,\Gamma^2\left(\frac12+\frac1t\right)}{t\,\tan\left(\frac{\pi}{t}\right)\,\Gamma^2\left(1+\frac1t\right)}
\]
is increasing and concave on $[3,+\infty)$.
\end{lemma}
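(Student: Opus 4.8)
The plan is to show that, after the substitution $u=1/t$, the function $\varphi$ is the exponential of a power series in $u$ with \emph{nonnegative} coefficients; monotonicity of $\varphi$ is then immediate, and concavity reduces to a crude numerical estimate.

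\smallskip
\emph{Step 1 (algebraic simplification).} With $u=1/t$, the Legendre duplication formula $\Gamma(u)\Gamma(u+\tfrac12)=2^{1-2u}\sqrt\pi\,\Gamma(2u)$ together with $\Gamma(1+z)=z\Gamma(z)$ gives
\[
2^{4u}\,\frac{\Gamma^2\!\left(\tfrac12+u\right)}{\Gamma^2(1+u)}=\frac{\pi\,\Gamma^2(1+2u)}{\Gamma^4(1+u)},
\qquad\text{so that}\qquad
\varphi(t)=\frac{\pi^2\,u\,\Gamma^2(1+2u)}{\tan(\pi u)\,\Gamma^4(1+u)}.
\]

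\smallskip
\emph{Step 2 (the key identity).} Taking logarithms, I would use the Weierstrass expansion $\ln\Gamma(1+z)=-\gamma z+\sum_{k\ge2}\frac{(-1)^k\zeta(k)}{k}z^k$ (valid for $|z|<1$, hence for $z=u$ and $z=2u$ when $0<u\le\tfrac13$), which gives $2\ln\Gamma(1+2u)-4\ln\Gamma(1+u)=\sum_{k\ge2}\frac{(-1)^k\zeta(k)(2^{k+1}-4)}{k}u^k$, together with the factorizations of $\sin$ and $\cos$, which give $\ln\frac{\pi u}{\tan(\pi u)}=-\sum_{j\ge1}\frac{(4^j-2)\zeta(2j)}{j}u^{2j}$. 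The even-index part of the first series is exactly $\sum_{j\ge1}\frac{(4^j-2)\zeta(2j)}{j}u^{2j}$, so it cancels the second series, and the Euler constant cancels as well; only odd powers of $u$ survive. Hence
\[
\varphi(t)=\pi\,e^{-f(1/t)},\qquad
f(u):=4\sum_{j\ge1}\frac{(4^j-1)\,\zeta(2j+1)}{2j+1}\,u^{2j+1},
\]
a power series with strictly positive coefficients and radius of convergence $\tfrac12$, hence smooth on $[0,\tfrac13]$.

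\smallskip
\emph{Step 3 (monotonicity).} Since $f$ is increasing on $[0,\tfrac13]$ and $t\mapsto 1/t$ is decreasing, $t\mapsto f(1/t)$ is decreasing, so $\varphi(t)=\pi e^{-f(1/t)}$ is increasing on $[3,+\infty)$.

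\smallskip
\emph{Step 4 (concavity).} Put $F:=f'$ and $G(t):=f(1/t)$, so that $\varphi=\pi e^{-G}$ and $\varphi''=\pi\bigl((G')^2-G''\bigr)e^{-G}$. A direct computation (with $u=1/t$) gives $G'(t)=-u^2F(u)$ and $G''(t)=2u^3F(u)+u^4F'(u)$, whence $\varphi''(t)\le0$ is equivalent to
\[
u\,F(u)^2\le 2F(u)+u\,F'(u)\qquad\text{for }u\in(0,\tfrac13].
\]
Both $F=f'$ and $F'=f''$ inherit nonnegative Taylor coefficients from $f$, so they are nonnegative on $[0,\tfrac13]$ and $u\mapsto uF(u)$ is increasing there; moreover $F(u)=4\sum_{j\ge1}(4^j-1)\zeta(2j+1)u^{2j}$, so for every $u\in(0,\tfrac13]$,
\[
u\,F(u)\le\tfrac13F\!\left(\tfrac13\right)=\tfrac43\sum_{j\ge1}(4^j-1)\zeta(2j+1)\,9^{-j}
<\tfrac43\,\zeta(3)\sum_{j\ge1}\left(\tfrac49\right)^{j}=\tfrac{16}{15}\,\zeta(3)<2.
\]
Consequently $2F(u)+uF'(u)-uF(u)^2\ge F(u)\bigl(2-uF(u)\bigr)\ge0$ on $(0,\tfrac13]$, so $\varphi''\le0$ there and $\varphi$ is concave (indeed strictly concave) on $[3,+\infty)$.

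\smallskip
The only delicate point is Steps 1--2, namely the exact simplification: one has to verify that the Weierstrass $\ln\Gamma$ series and the $\tan$ series combine so that all even powers of $u$ — and the term linear in $\gamma$ — cancel, leaving a series with manifestly positive coefficients. Once that identity is established the rest is routine: monotonicity is free, and concavity needs only the coarse bound $\tfrac13 F(\tfrac13)<2$, which follows from $\zeta(2j+1)\le\zeta(3)$ and a geometric series.
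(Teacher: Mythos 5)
Your proof is correct, and I checked the key identity numerically (e.g.\ at $t=4$ both $\varphi(4)$ and $\pi e^{-f(1/4)}$ give $\approx 2.8711$); all the individual steps (the Legendre reduction, the cancellation of the even powers and of the $\gamma$-terms, the formulas for $G'$, $G''$, and the bound $\tfrac13F(\tfrac13)<\tfrac{16}{15}\zeta(3)<2$) hold up. Your route is genuinely different in execution from the paper's, though the two are secretly studying the same object. The paper uses the reflection formula to write $\varphi(1/\alpha)/\pi=e^{g(\alpha)}$ with $g$ the logarithm of a quotient of Gamma functions, computes $g'(\alpha)=-\sum_{n\ge1}\frac{12\alpha^2 n}{(n^2-\alpha^2)(n^2-4\alpha^2)}$ via the digamma series, and then must establish \emph{two-sided} numerical bounds $13\alpha^2<-g'(\alpha)<36\alpha^2$ together with $-g''(\alpha)>22\alpha$ (by integral comparison of the series) to verify the sign of $\alpha^4(g')^2+2\alpha^3g'+\alpha^4g''$. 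You instead expand $\log\varphi$ directly via the Taylor series of $\log\Gamma(1+z)$ and the product formulas for $\sin$ and $\cos$, obtaining the closed form $\varphi(t)=\pi e^{-f(1/t)}$ with $f(u)=4\sum_{j\ge1}\frac{(4^j-1)\zeta(2j+1)}{2j+1}u^{2j+1}$; your $f'$ is exactly the paper's $-g'$ resummed term by term (expand $\frac{12\alpha^2n}{(n^2-\alpha^2)(n^2-4\alpha^2)}=\frac4n\sum_j(4^j-1)(\alpha/n)^{2j}$ and sum over $n$). What this buys you is that monotonicity becomes trivial from positivity of the coefficients, and concavity needs only the soft one-line estimate $uF(u)<2$ (you can even discard the $uF'$ term), replacing the paper's three separate numerical estimates. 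The price is the more delicate bookkeeping in Steps 1--2, but the cancellations you claim are exactly right, so there is no gap.
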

\begin{proof}
The first step consists in writing $\varphi(t)$ as a product of Gamma functions. This can be done by recalling the
well known identity $z\Gamma(z) = \Gamma(1+z)$, together with Euler's reflection formula
\[
\Gamma(z)\Gamma(1-z) = \frac{\pi}{\sin(\pi z)},\qquad \Gamma\left(\frac12 + z\right)\Gamma\left(\frac12 - z\right) = \frac{\pi}{\cos(\pi z)},
\]
and Legendre's duplication formula
\[
\Gamma\left(2 z\right)=\frac{2^{2z}}{2\sqrt{\pi}}\Gamma\left( z\right)\Gamma\left(\frac12 + z\right)
\]
(see \cite{Gamma}). Writing $\alpha = 1/t \in(0,1/3]$, we obtain
\[
\begin{split}
\frac{1}{\pi}\varphi\left(\frac{1}{\alpha}\right) &= \frac{2^{2\alpha}\,\Gamma\left(\frac12+\alpha\right)\cdot\cos(\pi \alpha)\Gamma\left(\frac12+\alpha\right)}{
2^{-2\alpha}\,\Gamma\left(1+\alpha\right)\cdot\sin(\pi \alpha)\Gamma\left(\alpha\right)} = \frac{2\sqrt{\pi}\,\frac{\Gamma\left(2\alpha\right)}{\Gamma\left(\alpha\right)}
\cdot\frac{\pi}{\Gamma\left(\frac12-\alpha\right)}}{
2^{-2\alpha}\,\Gamma\left(1+\alpha\right)\cdot\frac{\pi}{\Gamma\left(1-\alpha\right)}} \\
&= \frac{2\sqrt{\pi}}{2^{-2\alpha}\Gamma\left(\frac12-\alpha\right)}\cdot\frac{\Gamma\left(2\alpha\right)}{\Gamma\left(\alpha\right)}
\cdot\frac{\Gamma\left(1-\alpha\right)}{\Gamma\left(1+\alpha\right)} = - \frac{\Gamma\left(-\alpha\right)}{\Gamma\left(-2\alpha\right)}\cdot\frac{\Gamma\left(2\alpha\right)}{\Gamma\left(\alpha\right)}
\cdot\frac{\Gamma\left(-\alpha\right)}{\Gamma\left(\alpha\right)}\\
&= \exp\left(g(\alpha)\right),
\end{split}
\]
where
\[
g(\alpha) =  \log \left[-\frac{\Gamma\left(2\alpha\right)\Gamma^2\left(-\alpha\right)}{\Gamma\left(-2\alpha\right)\Gamma^2\left(\alpha\right)}\right].
\]
Since
\[
\frac{1}{\pi}\varphi'\left(t\right)
= \varphi\left(t\right)\cdot\left[-\frac{1}{t^2}g'\left(\frac{1}{t}\right)\right],
\quad
\frac{1}{\pi}\varphi''\left(t\right)
= \varphi\left(t\right)\cdot\left[\frac{1}{t^4}g'\left(\frac{1}{t}\right)^2 + \frac{2}{t^3}g'\left(\frac{1}{t}\right) + \frac{1}{t^4}g''\left(\frac{1}{t}\right)\right],
\]
the lemma will follow once we show that, whenever $0<\alpha\leq 1/3$,
\begin{equation}\label{eq:gamma1}
g'(\alpha)<0,\qquad \alpha^4g'\left(\alpha\right)^2 + 2\alpha^3g'\left(\alpha\right) + \alpha^4g''\left(\alpha\right)<0.
\end{equation}
To this aim we notice that the derivatives of $g$ can be evaluated in terms of the digamma function $\psi$, given by
\[
\psi(z) := \frac{d}{dz} \log\Gamma(z) = -\gamma -\frac{1}{z} -\sum_{n = 1}^{\infty}\left(\frac{1}{n+z}-\frac{1}{n}\right).
\]
Since the series above (and that of its derivatives) converges uniformly in $\{|z| \leq 3/4\}$ we obtain
\[
\begin{split}
g'(\alpha) &= 2\left[ \psi(2\alpha) + \psi(-2\alpha) - \psi(\alpha) - \psi(-\alpha)\right] \\ &=
2\sum_{n = 1}^{\infty}\left(\frac{1}{n+\alpha} + \frac{1}{n-\alpha} - \frac{1}{n+2\alpha} - \frac{1}{n-2\alpha}\right)
= - \sum_{n = 1}^{\infty}\frac{12\alpha^2 n}{(n^2-\alpha^2)(n^2-4\alpha^2)},
\end{split}
\]
and the first inequality in \eqref{eq:gamma1} follows. Furthermore, let us introduce the function
\[
h(x) = \frac{12\alpha^2 x}{(x^2-\alpha^2)(x^2-4\alpha^2)}.
\]
Through direct inspection we infer that, if $\alpha$ is in the desired interval and $x\geq1$, it holds
\[
h(x)>0, \qquad h'(x) =  \frac{12\alpha^2 (-3x^4 + 5\alpha^2x^2 + 4\alpha^4)}{(x^2-\alpha^2)^2(x^2-4\alpha^2)^2} \leq \frac{12\alpha^2 (-3 + 5\alpha^2 + 4\alpha^4)}{(x^2-\alpha^2)^2(x^2-4\alpha^2)^2} <0;
\]
this allows to use the elementary inequality
\[
h(1) + h(2) < \sum_{n=1}^\infty h(n) < h(1) + \int _1^ \infty h(x)\,dx
\]
in order to estimate
\begin{equation}\label{eq:digamma1}
\begin{split}
- g'(\alpha) & <   \frac{12\alpha^2 }{(1-\alpha^2)(1-4\alpha^2)} + 2\left[\log\frac{x^2-4\alpha^2}{x^2-\alpha^2}\right]_{x=1}^{x=\infty} \leq 12\alpha^2 \cdot \frac98 \cdot \frac95 + 2\log\frac{1-\alpha^2}{1-4\alpha^2} \\
&\leq \frac{243}{10} \alpha^2 + 2 \left( \frac{1-\alpha^2}{1-4\alpha^2} - 1\right) \leq \frac{243}{10} \alpha^2 + 6\alpha^2 \cdot \frac95 < 36 \alpha^2
\end{split}
\end{equation}
and
\begin{equation}\label{eq:digamma2}
- g'(\alpha) > \frac{12\alpha^2 }{(1-\alpha^2)(1-4\alpha^2)} + \frac{24\alpha^2 }{(4-\alpha^2)(4-4\alpha^2)} > 12\alpha^2 + \frac32 \alpha^2 > 13\alpha^2.
\end{equation}
Analogously, we have
\begin{equation}\label{eq:digamma3}
\begin{split}
-g''(\alpha) &=  \sum_{n = 1}^{\infty}\frac{\partial}{\partial\alpha}\frac{12\alpha^2 n}{(n^2-\alpha^2)(n^2-4\alpha^2)} =  12\sum_{n = 1}^{\infty}\frac{2\alpha n^5 - 8\alpha^5 n}{(n^2-\alpha^2)^2(n^2-4\alpha^2)^2}\\
 &> 12\frac{2\alpha  - 8\alpha^5}{(1-\alpha^2)^2(1-4\alpha^2)^2} \ge 12 \left(2-\frac{8}{81}\right)\alpha > 22\alpha
\end{split}
\end{equation}
(also this series has positive terms).
Taking into account equations \eqref{eq:digamma1}, \eqref{eq:digamma2} and \eqref{eq:digamma3} we finally deduce
\[
\alpha^4g'\left(\alpha\right)^2 + 2\alpha^3g'\left(\alpha\right) + \alpha^4g''\left(\alpha\right) < 36^2 \alpha^8 - 26 \alpha^5 - 22 \alpha^5 \leq
\alpha^5\left(36^2 \cdot\frac{1}{27} - 48\right) = 0,
\]
and also the second inequality in \eqref{eq:gamma1} follows.
\end{proof}

\subsection*{Funding} Work partially supported  by the PRIN-2015KB9WPT Grant:
``Variational methods, with applications to problems in mathematical physics and geometry'',
by the ERC Advanced Grant 2013 n. 339958:
``Complex Patterns for Strongly Interacting Dynamical Systems - COMPAT'',
and by the INDAM-GNAMPA group.


\def\cprime{$'$}
\providecommand{\bysame}{\leavevmode\hbox to3em{\hrulefill}\thinspace}
\providecommand{\MR}{\relax\ifhmode\unskip\space\fi MR }
\providecommand{\MRhref}[2]{%
  \href{http://www.ams.org/mathscinet-getitem?mr=#1}{#2}
}
\providecommand{\href}[2]{#2}

\end{document}